\title{Higher idempotent completion for Soergel bimodules}
\author{Isabela Recio}
\address{Fachbereich Mathematik, Universit\"at Hamburg, 
Bundesstra{\ss}e 55, 
20146 Hamburg, Germany}
\email{isabela.recio.hernandez@uni-hamburg.de}
\begin{document}

\begin{abstract}
Higher idempotent completions have recently emerged as an important concept in condensed matter physics and topological quantum field theory. In this work, we present two applications of this concept to higher categories relevant in link homology theory and higher representation theory. We begin by studying singular Soergel bimodules, which, in Coxeter type $A$, have been identified---via the concept of perverse schobers---as the origin of the homotopy-coherent Rouquier braiding on chain complexes of Soergel bimodules. Our first main result establishes a converse connection for all Coxeter types: we show that singular Soergel bimodules can be recovered from Soergel bimodules through partial 2-categorical idempotent completions. Specializing to type $A$, we further assemble singular Soergel bimodules into a semistrict monoidal 2-category and identify certain quotients as semistrict monoidal 2-categories of progressive $\glN$-foams. Our second main result uses 2-categorical idempotent completions to formulate a higher categorical branching rule for such foam theories. These theories underlie the Lee--Gornik--Rasmussen--Wu deformations of coloured $\glN$ link homology. In particular, we provide a fully local version of Rose--Wedrich's decomposition theorem on deformed coloured link homology. 
  
\end{abstract}
\maketitle
\small\tableofcontents

\section{Introduction}

Categorical completion operations have become an indispensable tool in many areas of mathematics.  Describing a category of interest as the completion of a more manageable subcategory can drastically reduce its complexity, for example from finite-dimensional linear algebra to vector and matrix calculus. The classical \emph{idempotent completion} or \emph{Karoubi envelope} is the universal way of adjoining images of idempotent endomorphisms $p\colon c\to c$ to a category.

The framework of \emph{higher idempotent completion} \cite{DR19,GJF19, D22, RZ25} generalizes the Karoubi envelope to higher categories such as 2-categories, $n$-categories or their $\infty$-categorical analogs. In such settings the notion of equality of morphisms is weakened to a type of equivalence mediated by higher data. It is then natural to consider weaker notions of idempotents. For example, a \emph{2-categorical idempotent} or \emph{2-condensation monad} $p\colon c\to c$ does not need to satisfy $p\circ p=p$ on the nose. Instead one asks for a comparison 2-morphism $\mu \colon p\circ p \To p$, which does not even need to be invertible, as long as it admits a right-inverse. The appropriately defined \emph{image} or \emph{2-condensation} of a 2-categorical idempotent may or may not exist in the higher category of interest. In the latter case, the \emph{2-categorical idempotent completion} is the universal way of adjoining such data.

In this work, we explore applications of higher idempotent completion to higher categorical structures appearing in link homology theories and representation theory. 

\subsection{Higher idempotent completion of Soergel bimodules}
We begin with monoidal categories of \emph{Soergel bimodules}, which were developed as categorifications of Hecke algebras \cite{S92} and which have since become central objects of modern representation theory. Amongst other applications, complexes of Soergel bimodules yield categorical braid group actions \cite{R06} and form the underlying structure for link homology theories in the sense of Khovanov, see e.g. \cite{K07,LMGRSW24}. They are ultimately responsible for the sensitivity and usefulness of such theories in smooth 4-manifold topology \cite{R10, MWW22, RW24}.  

The (monoidal) categories of Soergel bimodules admit a generalization to 2-categories of \emph{singular Soergel bimodules} \cite{W11}, which categorify partial idempotent completions of Hecke algebras, a.k.a. \emph{Hecke algebroids}, give rise to coloured link homology theories \cite{MSV09, WW17}, and form prototypical examples of perverse schobers in type $A$ \cite{DW25}. 
 
Our first main result is that the procedure of 2-idempotent completion yields a formal description of singular Soergel bimodules through Soergel bimodules. We prove the following statement in the form of \Cref{thm:main} and \Cref{cor:main} for every Coxeter system under standard assumptions on the realization.

\begin{theoremA}
\label{thmA}
    The inclusion $\sbim\hookrightarrow{\sSbim}$ of Soergel bimodules into singular Soergel bimodules yields an equivalence of the 2-categorical idempotent completions
    \[
    \kkSBim \xrightarrow{\simeq} \kksSBim
    \]
so that the fully faithful 2-functor
\begin{equation}
    \label{eq:partialcompletion}
\sSbim \hookrightarrow \kksSBim \xrightarrow{\simeq} \kkSBim\end{equation}
exhibits singular Soergel bimodules as a \emph{partial 2-categorical idempotent completion} of Soergel bimodules.
\end{theoremA}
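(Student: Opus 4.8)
The plan is to exploit the functoriality of the $2$-categorical idempotent completion $\kkar$. Write $F\colon \sbim \hookrightarrow \sSbim$ for the inclusion of Soergel bimodules as the full sub-$2$-category on the single ``regular'' object, which I denote $e$; thus $\sbim = \sSbim(e,e)$ as monoidal categories, and since $\sbim$ is idempotent complete, $\kkar$ may be applied to it directly. Because $\kkar$ is a $2$-functor in the appropriate sense \cite{DR19, D22, RZ25}, it suffices to show that the induced $2$-functor $\kkar F\colon \kkSBim \to \kksSBim$ is fully faithful and essentially surjective, and I would treat the two properties in turn.

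Full faithfulness is the easy half. Since $\sbim$ has a single object, every object of $\kkSBim$ is a $2$-condensation monad $(e, A)$ supported on $e$, sent by $\kkar F$ to the same data viewed inside $\sSbim$. For two such monads $(e,A)$, $(e,A')$, the Hom-category in $\kkSBim$, respectively in $\kksSBim$, is by construction a category of $(A,A')$-bimodule $1$-morphisms and bimodule $2$-morphisms assembled entirely within the monoidal Hom-category $\sbim(e,e) = \sSbim(e,e)$. As $F$ restricts to the identity on this monoidal category, the induced functor on the associated bimodule categories is an equivalence; hence $\kkar F$ is fully faithful. (More conceptually, one may invoke the general principle that $\kkar$ preserves fully faithful $2$-functors \cite{DR19, D22}.)

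Essential surjectivity is the heart of the argument. An object of $\kksSBim$ is a $2$-condensation monad $(I, A)$ supported on some object $I$ of $\sSbim$, i.e.\ on a finite parabolic subgroup $W_I \le W$, and the decisive point is that each such $I$ is itself a $2$-condensate of the regular object $e$ \emph{within} $\sSbim$. Concretely, the singular Bott--Samelson bimodule $B_I \colon e \to e$ attached to the longest element $w_I \in W_I$ --- that is, $R \otimes_{R^{W_I}} R$ up to a grading shift --- carries a canonical special Frobenius algebra structure, with multiplication induced by the ring structure of $R$ and comultiplication by the Demazure operator $\partial_{w_I}\colon R \to R^{W_I}$; the standard assumptions on the realization guarantee the relevant trace form is non-degenerate, so that $(e, B_I)$ is a genuine $2$-condensation monad whose $2$-condensate in $\sSbim$ is (equivalent to) $I$. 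This last identification --- that $\sSbim(I,J)$ is precisely the category of $(B_I, B_J)$-bimodules in $\sbim$, with matching horizontal composition and coherence --- is a repackaging of Williamson's presentation of singular Soergel bimodules, and is where the bulk of the work lies.

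Granting this, the remainder is formal. By the composability of $2$-condensations \cite{DR19}, transporting the monad $(I, A)$ along the condensation exhibiting $I$ as a $2$-condensate of $e$ produces a $2$-condensation monad supported on $e$ with the same $2$-condensate in $\kksSBim$, and this monad lies in the image of $\kkar F$; hence $\kkar F$ is essentially surjective, and therefore an equivalence. For the final assertion, the unit $\sSbim \hookrightarrow \kksSBim$ of the $2$-idempotent completion is always fully faithful, so composing it with an inverse of $\kkar F$ yields the fully faithful $2$-functor $\sSbim \hookrightarrow \kkSBim$ of \eqref{eq:partialcompletion}; it fails to be essentially surjective --- $\kkSBim$ also contains $2$-condensates of condensation monads (already split $2$-morphism idempotents, for instance) not equivalent to any $B_I$ --- so $\sSbim$ appears only as a \emph{partial} $2$-categorical idempotent completion of $\sbim$. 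The main obstacle throughout is the identification of the third paragraph: establishing, uniformly over all Coxeter types and with explicit control of the realization hypotheses, that the $2$-condensate of the Frobenius algebra $B_I$ recovers not merely the object $I$ but the entire local $2$-categorical structure of $\sSbim$ around it, which forces one to pass through the singular Soergel calculus.
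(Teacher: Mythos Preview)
Your strategy coincides with the paper's: exhibit each object $R^J$ of $\sSbim$ as the $2$-condensate of the regular object $R$ via a separable Frobenius structure on $B_J = R\otimes_{R^J} R\langle l(w_J)\rangle$, then conclude that the induced map on completions is an equivalence. Two points of comparison are worth making.

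First, you have substantially overestimated the ``main obstacle.'' You claim the bulk of the work is identifying $\sSbim(I,J)$ with the category of $(B_I,B_J)$-bimodules in $\sbim$, compatibly with horizontal composition. This identification is \emph{not} an input to the proof; it is a consequence of the theorem. The paper instead invokes a general lemma (\cite[Lemma~A.2.4]{D22}): if a $2$-functor between locally idempotent-complete $2$-categories is fully faithful and every object of the target is a $2$-categorical retract of an object in the image, then the induced map on $\kkar$ is an equivalence. The only inputs are (i) full faithfulness of $\iota$, which is immediate once one knows $\Hom_{\sSbim}(R,R)=\SBim$, and (ii) the explicit manifestly split $2$-idempotent data for $R^J$ over $R$. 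Your composability-of-condensations argument is a valid hand-rolled version of this lemma, but you do not need to pass through any singular Soergel calculus beyond the explicit splitting maps.

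Second, two small technical corrections. The separable Frobenius structure the paper uses has multiplication built from the Demazure operator $\partial_J$ and the section given by inserting $\alpha_J/|W_J|$ (the product of positive roots, rescaled)---the opposite of what you wrote; your ``multiplication from the ring structure'' does not obviously satisfy $\mu\cdot\delta=\id$. Also, you should explicitly note that $B_J$ lies in $\SBim$ (not merely in $\sSbim(R,R)$): this is a separate lemma, proved by identifying $B_J$ with the indecomposable Soergel bimodule $B_{w_J}$ via Williamson's classification of singular indecomposables, and it is what places the $2$-condensation monad inside $\sbim$ rather than only in $\sSbim$.
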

 Soergel bimodules have been made highly accessible via the Elias--Khovanov--Williamson diagrammatic calculus \cite{EKh10,EW16}, which has facilitated computational results such as \cite{EK10, L15, SW24}. An analogous combinatorial and graphical language is currently being developed for singular Soergel bimodules and has proven to be quite complex; see \cite{EKLP24_demazure, EKLP24_singular} and related work.

The 2-categorical idempotent completion provides a conceptual framework---in some sense: the universal setting---for an extension of the diagrammatic calculus to the singular case. In \Cref{sec:ssbimcompl} we explore the 2-categorical idempotent completion in type $A$ and pursue \Cref{q:idempotentcompletion} on whether \eqref{eq:partialcompletion} may be an equivalence. This would be equivalent to the 2-categorical idempotent completeness of $\sSbim$ and to $\sbim$ having $\sSbim$ as its full (not only partial) completion. Deciding these questions amounts to classifying separable Frobenius algebra objects in $\sbim$ and their Morita equivalence classes. We provide partial results and examples in small ranks and comment on connections to Kazhdan--Lusztig cell theory and the Klein--Elias--Hogancamp conjecture \cite{K14, EH17, MMMTZ23} on Frobenius algebra structures on indecomposable Soergel bimodules of distinguished involutions.

\subsection{Higher idempotent completion of foams} 
\label{sub:intro_higher}

For a second area of applications of the theory of $2$-categorical idempotent completion we consider the diagrammatic higher categories underlying the combinatorial description of the bigraded general linear link homology theories of Khovanov--Rozansky~\cite{KhR08}: the monoidal 2-category of progressive $\glN$-foams \cite{QR16}. 

Before describing the concrete application, we comment on the connection to the previous context. An important structural feature in Coxeter type $A$ is that parabolic subgroup inclusions endow not only the collection of symmetric groups with a monoidal structure, but also the collections of associated Hecke algebras, and the 2-categories of Bott-Samelson bimodules and Soergel bimodules. In \cite{SW24} (and in a different framework also in \cite{LMGRSW24}) the first step before the construction of a braiding is the construction of locally linear (semistrict) monoidal 2-categories $\mbsbim$ and $\msbim$ of all type $A$ Bott--Samelson bimodules and all type $A$ Soergel bimodules, respectively. In \Cref{prop:foambsbim} and \Cref{cor:2functors}, we extend these constructions to the singular cases and obtain locally linear (semistrict) monoidal 2-categories $\msbsbim$ and $\mssbim$.

\begin{theoremB}

\label{thmB}
    The progressive $\glN$-foam 2-categories of \cite{QR16} assemble into a locally linear semistrict monoidal 2-category $\NFoam_+$ that is realized as a quotient of $\msbsbim$ in the following commutative diagram of locally linear monoidal 2-functors
    \[ \begin{tikzcd}
	     \msbim & \mbsbim & \\
	      \mssbim & \msbsbim & 
           \NFoam_+ 
	       \arrow[from=1-2, to=2-3]
           \arrow["\kar",from=1-2, to=1-1]
           \arrow["\kar",from=2-2, to=2-1]
           \arrow[twoheadrightarrow, from=2-2, to=2-3]
	       \arrow[hookrightarrow,from=1-2, to=2-2]
           \arrow[hookrightarrow,from=1-1, to=2-1]
    \end{tikzcd}
    \]
    for locally $\C$-linear incarnations of these categories. 
\end{theoremB}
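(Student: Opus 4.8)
The plan is to build the double-headed arrow $\msbsbim \twoheadrightarrow \NFoam_+$ as a ``$\glN$-specialization'' $2$-functor that extends the foamation functor on ordinary type-$A$ Bott--Samelson bimodules to the singular setting, and to deduce everything else from it. Concretely I would proceed in three steps: (i) construct a locally $\C$-linear $2$-functor $F\colon \msbsbim \to \NFoam_+$ and show it is full and essentially surjective; (ii) identify its kernel $2$-ideal $K$, verify that $K$ is stable under the monoidal product of $\msbsbim$, and conclude that the semistrict monoidal structure of $\msbsbim$ descends to $\NFoam_+ \simeq \msbsbim / K$, turning $F$ into a locally linear monoidal quotient; (iii) check the remaining faces of the diagram. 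The essential inputs are the generators-and-relations presentation of the progressive $\glN$-foam $2$-categories from \cite{QR16} and the singular Bott--Samelson constructions of \Cref{prop:foambsbim} and \Cref{cor:2functors}.

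For step (i): on objects, $F$ should send a composition $\underline a = (a_1,\dots,a_k)$ --- equivalently a parabolic subset in rank $n = \sum_i a_i$ --- to the foam object with facet labels $\underline a$, which is a zero object whenever some $a_i > N$; letting $n$ vary, every foam object is obtained, so $F$ is essentially surjective. On generating $1$-morphisms (induction and restriction bimodules) and generating $2$-morphisms (their units, counits, and (co)multiplications, together with multiplication by the polynomial generators $x_i$) $F$ is defined by sending these to the corresponding (un)zip foams and facet dots. Well-definedness amounts to checking that the defining relations of $\msbsbim$ --- the singular Frobenius, (co)associativity, and $R$-bimodule relations --- hold among the images; the structural relations become exactly the foam relations of \cite{QR16}, so the only genuinely new input concerns the dot generators, handled via dot migration and neck cutting. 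Fullness holds because progressive foams are generated over $\C$ by (un)zips and dots, all in the image of $F$.

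For step (ii): $F$ manifestly kills multiplication, along the $i$-th facet, by any symmetric polynomial vanishing in $H^*(\mathrm{Gr}(a_i,N))$, i.e.\ by the $2$-endomorphisms ``multiply by $h_{N-a_i+1}(x^{(i)}),\dots,h_N(x^{(i)})$'' of $\id_{\underline a}$, where $x^{(i)}$ names the block-$i$ variables. Let $K$ be the $2$-ideal they generate. The real work is the reverse containment: matching the foam presentation of \cite{QR16} against the singular Soergel presentation, one shows that $\msbsbim / K$ has precisely the generators and relations of $\NFoam_+$, so $K = \ker F$ and $F$ induces an equivalence $\msbsbim / K \xrightarrow{\simeq} \NFoam_+$. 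Since the chosen generating set of $K$ for $\underline a \otimes \underline b = (\underline a, \underline b)$ is the union of those for $\underline a$ and for $\underline b$, the $2$-ideal $K$ is stable under $\otimes_{\msbsbim}$; hence $\msbsbim / K$ inherits a locally $\C$-linear semistrict monoidal structure, which under the equivalence is the juxtaposition structure assembling the $\glN$-foam $2$-categories of \cite{QR16}, and $F$ becomes monoidal. (Alternatively, one could equip $\NFoam_+$ with this monoidal structure by hand, mirroring the construction of $\mbsbim$ and $\msbim$ in \cite{SW24} --- concatenate objects, juxtapose webs and foams, take the interchange $2$-isomorphisms to be the evident foam swaps --- and verify the Gray-type coherence axioms diagrammatically as in the non-singular case.)

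For step (iii): the square built from the two $\kar$ functors and the two fully faithful inclusions commutes because idempotent-completing then including, and including then idempotent-completing, both produce the $2$-category of direct summands of shifts of singular Bott--Samelson bimodules; the triangle through $\mbsbim \to \NFoam_+$ commutes once that arrow is defined as the composite $\mbsbim \hookrightarrow \msbsbim \xrightarrow{F} \NFoam_+$, which one then checks restricts, on ordinary Bott--Samelson bimodules, to the expected foamation functor (with image supported on the full-flag objects $(1,\dots,1)$). The main obstacle is the bookkeeping in step (ii): identifying the kernel $2$-ideal requires a careful generator-by-generator, relation-by-relation comparison of the \cite{QR16} foam presentation with the singular Soergel one, together with a check that the $\glN$-specialization is compatible with the Gray-type interchangers --- heavy, but routine given the cited presentations.
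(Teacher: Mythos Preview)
Your approach would work, but the paper takes a cleaner route that sidesteps most of your step (ii). The key device is the intermediate $2$-category $\Foam_+$ of \emph{untruncated} progressive foams (no restriction on facet labels), for which the paper establishes an \emph{equivalence} $\Foam_+ \xrightarrow{\simeq} \msbsbim$ rather than a mere quotient (\Cref{prop:foambsbim}). The functor runs from foams to bimodules---compositions go to partially symmetric polynomial rings, generating foams to the units, counits, and associators of the ambidextrous adjunctions in \Cref{lem:indresadj}---and the fact that it induces isomorphisms on $2$-morphism spaces is imported from Webster's comparison of equivariant flag categories with categorified quantum groups, not checked relation-by-relation. Since $\NFoam_+$ is \emph{by definition} the quotient of $\Foam_+$ by the $2$-ideal generated by foams containing a facet labeled $>N$, the surjection $\msbsbim \twoheadrightarrow \NFoam_+$ is then immediate by transport across the equivalence, and the rest of the diagram is assembled in \Cref{cor:2functors}.

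The paper also installs the semistrict monoidal structure on the foam $2$-categories directly by juxtaposition of sequences, webs, and foams (\Cref{prop:monoidal_foams})---this is exactly your parenthetical ``alternatively''---rather than descending it from $\msbsbim$ through a kernel. Consequently your explicit kernel description in terms of the Grassmannian relations $h_{N-a_i+j}(x^{(i)})$ is not needed for the theorem; it is moreover a different presentation of the $2$-ideal from the paper's (``any facet labeled $>N$''), and while the two should coincide, verifying that is an extra claim you would have to supply. What your route buys is an intrinsic bimodule-side description of the kernel; what the paper's route buys is that both the quotient and its monoidal compatibility come essentially for free once the untruncated equivalence $\Foam_+ \simeq \msbsbim$ is in place.
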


The coloured $\glN$ link homology theories associated with $\NFoam_+$ are sometimes described as $\mathrm{GL}(N)$-equivariant because the coloured unknot invariants, and thus the underlying Frobenius algebras, are $\mathrm{GL}(N)$-equivariant cohomology rings of Grassmannians. For example, the uncoloured unknot invariant is:

\[
\C[e_1,\dots,e_N, X]/\langle X^N-e_1 X^{N-1}+e_2 X^{N-2}- \cdots + (-1)^N e_N\rangle \cong H^*_{\mathrm{GL}(N)}(\C P^{N-1})
\]

To obtain the usual (coloured) $\glN$ link homology $\KhR^N$ \cite{KhR08, QR16} or one of its deformed versions $\KhR^\Sigma$ \cite{L05, G04, R10, W13, RW16}, one needs to take a quotient of $\NFoam_+$ by an additional foam relation, see \Cref{def:deformed_foams}, to obtain the deformed category $\defNFoam$. This relation amounts to specifying an $N$-element multiset $\Sigma$ of complex numbers as roots of the characteristic polynomial of the operator $X$ acting on the unknot invariant, which then becomes:

\[
\C[X]/\langle \prod_{\lambda\in \Sigma} (X-\lambda)\rangle 
=
\C[X]/\langle X^N-e_1(\Sigma) X^{N-1}+e_2(\Sigma) X^{N-2}- \cdots + (-1)^N e_N(\Sigma)\rangle
\]
Here the coefficients are signed elementary symmetric polynomials evaluated on $\Sigma$. 
Perhaps the most well-known version of this is Lee's deformation of Khovanov homology \cite{L05} for $\C[X]/\langle X^2 -1 \rangle$, which underlies the definition of Rasmussen's $s$-invariant \cite{R10}. The well-known computation of the dimension of the Lee homology of any $l$-component link as $2^l$ admits a beautiful explanation in terms of the Karoubi envelope of the Lee-version of Bar-Natan's dotted cobordism category \cite{BNM06}. One interpretation of the deformation behaviour is that every link component selects one of the two summands of the Levi subalgebra $\glnn{1}\oplus\glnn{1} \subset \glnn{2}$ \cite{W25}. The two copies are non-interacting and each lead to a trivial link invariant that is always $1$-dimensional. Summing all $2^l$ selections yields the total dimension.

The deformation behaviour of coloured $\glN$ homology depends on the multiplicities $N_i$ of the pairwise different elements $\lambda_i$ of $\Sigma$, i.e. on the partition $N=\sum_i N_i$, and resembles the branching rule for restricting $\glN$ representations to the Levi subalgebra $\bigoplus_{i} \glnn{N_i}$. More specifically, let $K$ be a knot and use $K^k$ to denote $K$ coloured with the $k$-th fundamental (exterior power) representation. Then \cite[Theorem 1.1]{RW16} specializes (from the generality of links with possibly several components) to yield an isomorphism of singly-graded vector spaces:

\begin{align}
\label{eq:krformula}
\KhR^{\Sigma}(K^k) \cong \bigotimes_{\sum_i k_i=k} \bigotimes_i \KhR^{N_i} (K^{k_i})    
\end{align}

Here the summands are canonically indexed by choices of multisubsets $A\subset \Sigma$ of size $|A|=k$ and $k_i$ denotes the multiplicity of $\lambda_i$ in $A$.

The 2-categorical idempotent completion allows us to extend the approach from \cite{BNM06} and \cite{RW16} by one level. Instead of just splitting webs and foams by idempotents, i.e. 1-morphisms and 2-morphisms, we can now also split objects in foam 2-categories, i.e. endpoints of tangles. The following appears as \Cref{thm:full_split} in the main text. 

\begin{theoremC}
\label{thmC}
Let $\Sigma$ be a multiset of pairwise distinct complex deformation parameters $\lambda_i$ with multiplicities $N_i$ and $N=\sum_i N_i$. Then in the 2-categorical idempotent completion $\kkdefFoam$ the generating object indexed by $0\leq k\leq N$ admits a decomposition as direct sum 
    \begin{equation*}
     k \simeq \bigboxplus_{\sum_i k_i=k} \bigboxtimes_{i} \kartwobj{k_i}{\lambda_i^{N_i}}
 \end{equation*}
 of monoidal products of objects $\kartwobj{k_i}{\lambda_i}$. 
\end{theoremC}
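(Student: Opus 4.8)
The plan is to reduce to the case $k=1$, where the decomposition comes directly from the deformed unknot relation, and then to bootstrap to arbitrary $k$ using the monoidal structure on $\NFoam_+$ from Theorem~B together with the standard presentation of the object $k$ as a retract of the $k$-fold power $1^{\boxtimes k}$.

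First I would treat $k=1$. In $\defNFoam$ the endomorphism ring of the identity $1$-morphism on the object $1$ is the deformed unknot invariant $\End(\id_1)=\C[X]/\langle\prod_{\lambda\in\Sigma}(X-\lambda)\rangle=\C[X]/\langle\prod_i(X-\lambda_i)^{N_i}\rangle$, which is commutative. Since the $\lambda_i$ are pairwise distinct, the Chinese remainder theorem splits it as the product of local rings $\prod_i\C[X]/\langle(X-\lambda_i)^{N_i}\rangle$, yielding a complete system of pairwise orthogonal idempotents $e_i\in\End(\id_1)$. In $\kdefFoam$, where $\End(1)$ is additive and idempotent complete, each $e_i$ splits off a direct summand of the monoidal unit $\id_1$; by Eckmann--Hilton one has $e_i\otimes e_i=e_i$, so this summand is canonically a separable (special Frobenius) algebra, i.e.\ a $2$-condensation monad on the object $1$, whose condensation is $\kartwobj{1}{\lambda_i^{N_i}}$ --- its $\id$-endomorphism ring being the local ring $\C[X]/\langle(X-\lambda_i)^{N_i}\rangle\cong\C[Y]/\langle Y^{N_i}\rangle$, the expected ``$\glnn{N_i}$-coloured-by-$1$'' invariant. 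Because $(1,\id_1)$ decomposes as the orthogonal direct sum of the $2$-condensation monads $(1,e_i)$ (note $e_ie_j=0$ for $i\neq j$), and passage to $\kkar$ sends such an orthogonal direct sum to the direct sum of the corresponding condensations (a formal property of the $2$-Karoubi completion, cf.\ \cite{DR19,D22}), we obtain $1\simeq\bigboxplus_i\kartwobj{1}{\lambda_i^{N_i}}$ in $\kkdefFoam$.

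For general $k$ I would use that $k$ is a retract of $1^{\boxtimes k}$ (the object given by $k$ points coloured $1$) via merge/split webs, with retracting idempotent the thick antisymmetrizer foam $e_k\in\End(1^{\boxtimes k})$, so $k\simeq\im e_k$ already in $\kdefFoam$ \cite{QR16}. Taking the $k$-fold monoidal product of the $k=1$ decomposition gives $1^{\boxtimes k}\simeq\bigboxplus_{\vec\imath}\kartwobj{1}{\lambda_{i_1}^{N_{i_1}}}\boxtimes\cdots\boxtimes\kartwobj{1}{\lambda_{i_k}^{N_{i_k}}}$, with $\vec\imath=(i_1,\dots,i_k)$ ranging over $\{1,\dots,m\}^k$ and the summand indexed by $\vec\imath$ being the image of the idempotent $E_{\vec\imath}$ placing $e_{i_j}$ on the $j$-th tensor factor. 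The foam $e_k$ does not commute with the individual $E_{\vec\imath}$ --- a permutation foam carries a dot on facet $j$ to one on facet $\sigma(j)$ --- but it does commute with each symmetrised idempotent $E_{\vec k}:=\sum_{\mathrm{type}(\vec\imath)=\vec k}E_{\vec\imath}$, indexed by $\vec k=(k_1,\dots,k_m)$ with $k_i\geq0$ and $\sum_i k_i=k$, and $\sum_{\vec k}E_{\vec k}=\id$; hence $e_k=\sum_{\vec k}e_kE_{\vec k}$ and $k\simeq\bigboxplus_{\sum_i k_i=k}\im(e_kE_{\vec k})$. It then remains to identify $\im(e_kE_{\vec k})\simeq\bigboxtimes_i\kartwobj{k_i}{\lambda_i^{N_i}}$: on $\im E_{\vec k}$ the $\binom{k}{k_1,\dots,k_m}$ reorderings of a fixed sector-tuple are permuted transitively, so $\im(e_kE_{\vec k})$ is computed from one block, on which $e_k$ restricts to the monoidal product over $i$ of the antisymmetrizer foam internal to the $\lambda_i$-sector acting on $(\kartwobj{1}{\lambda_i^{N_i}})^{\boxtimes k_i}$ --- the foam incarnation of $\Lambda^k(\bigoplus_i V_i)\cong\bigoplus_{\sum k_i=k}\bigotimes_i\Lambda^{k_i}V_i$ --- whose $i$-th factor has image $\kartwobj{k_i}{\lambda_i^{N_i}}$ by the same retract-of-a-power description inside the $\lambda_i$-sector, with the convention $\kartwobj{k_i}{\lambda_i^{N_i}}=0$ for $k_i>N_i$. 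Re-indexing the resulting sum by size-$k$ multisubsets $A\subset\Sigma$ gives the stated decomposition; the cases $k=0$ and $k=N$ reduce to single (trivial, resp.\ top exterior power) terms.

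The skeleton above --- CRT for $k=1$, promoting unit-summands to $2$-condensation monads, additivity of $\kkar$, and the combinatorial bookkeeping with the $E_{\vec k}$ --- is routine; the load-bearing step, and the one I expect to be the main obstacle, is the last identification: that the thick antisymmetrizer ``factors over the distinct deformation parameters'', i.e.\ that $e_kE_{\vec k}$ is conjugate to $\bigboxtimes_i e_{k_i}$ up to a permutation foam invertible on $\im E_{\vec k}$. The cleanest resolution would be to first prove that, compatibly with the monoidal structure of Theorem~B, $\kdefFoam$ splits as a ``product'' of the sector theories attached to the individual $\lambda_i$ --- so that the object $k$ becomes literally $\Lambda^k$ of $\bigboxplus_i 1$ and the decomposition is exactly the categorified branching rule --- rather than manipulating individual foams; alternatively one argues hands-on from the splitter/merger relations of \cite{QR16}, in the spirit of the analysis of the deformed homology in \cite{RW16}. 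One should also confirm that $\kartwobj{k_i}{\lambda_i^{N_i}}$, as defined in the text, coincides with the condensation produced here, so that the two sides of the asserted equivalence are literally the same object.
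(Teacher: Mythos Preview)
Your strategy is plausible but takes a markedly more circuitous route than the paper. You reduce to $k=1$ via CRT and then bootstrap through the antisymmetrizer on $1^{\boxtimes k}$; the paper instead uses that the full decomposition of $\End(\id_k)\cong H_k^\Sigma$ for \emph{every} $k$ is already available from \cite{RW16} (not just $k=1$), and then invokes the general principle (\Cref{prop:idem_sumtwo}) that a direct sum decomposition of $\End(\id_x)$ in a locally additive, 2-idempotent-complete 2-category induces a direct sum decomposition of $x$. This immediately yields $k\simeq\bigboxplus_{A}\kartwobj{k}{A}$ without any reference to $1^{\boxtimes k}$ or antisymmetrizers. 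For the tensor factorisation $\kartwobj{k}{A}\simeq\bigboxtimes_i\kartwobj{k_i}{A_i}$, the paper again avoids your antisymmetrizer bookkeeping: the key input is that the merge/split (zip/unzip, digon) foams between facets coloured by \emph{disjoint} multisubsets become \emph{invertible} 2-morphisms in $\kdefFoam$ (\Cref{prop:inv_foams}, from \cite{RW16}), so the coloured merge and split webs themselves furnish mutually inverse 1-morphisms in $\kkdefFoam$, giving the equivalence directly.

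The step you correctly flag as load-bearing---that $e_kE_{\vec k}$ is conjugate to $\bigboxtimes_i e_{k_i}$ via permutation foams---is precisely what the paper's approach sidesteps. To make your argument rigorous you would in any case need the invertibility of crossings between differently-coloured strands, which is essentially the same input the paper uses; but the paper applies it one categorical level higher (to show merge/split webs are equivalences of objects) rather than filtering it through the antisymmetrizer combinatorics. Your ``cleanest resolution'' suggestion---first splitting $\kdefFoam$ into sector theories---is in fact close to what the paper does \emph{after} proving the theorem (\Cref{cor:cat_split}), not as a prerequisite for it. In short: your proof can be made to work, but the paper's argument is shorter and avoids the very obstacle you anticipate, by exploiting the already-established structure of $H_k^\Sigma$ and the invertibility of disjointly-coloured foams.
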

Moreover, the full sub-2-category on the objects $\kartwobj{m}{\lambda_i}$ is equivalent to $\kkar \kar N_i\mathbf{Foam}_+^{\Sigma_i}$ where $\Sigma_i$ contains $\lambda_i$ with multiplicity $N_i$ and no other root. Finally, the full sub-2-categories corresponding to distinct roots $\lambda_i$ are non-interacting in $\kkdefFoam$. This appears as \Cref{cor:cat_split} in the main text.

\subsection*{Acknowledgements}
The author would like to thank Christophe Hohlweg, David Reutter, Rapha\"el Rouquier, Catharina Stroppel and Kevin Walker for inspiring and helpful conversations on various aspects of this work. The author especially thanks her advisor Paul Wedrich for his insight, dedication and support.

\subsection*{Funding}

The author acknowledges support from the Deutsche
Forschungsgemeinschaft (DFG, German Research Foundation) under Germany's
Excellence Strategy - EXC 2121 ``Quantum Universe'' - 390833306 and the
Collaborative Research Center - SFB 1624 ``Higher structures, moduli spaces and
integrability'' - 506632645.

\section{Categorical completions}
Here we recall notions related to idempotent completions of 1- and 2-categories, specializing the constructions in \cite{GJF19} for $n = 2$, and comparing with \cite{DR19}. Our treatment follows \cite{D22}, where the $n = 2$ case is developed explicitly. Since the terminology is not uniform in the literature\footnote{We apologise in advance for introducing even more variance, but hope that our terminological choices are maximally descriptive.}, we opt for a self-contained treatment. 

\begin{definition}
    Let $\CS$ be a 1-category. An idempotent morphism $e \colon c \to c$ in $\CS$ is said to \emph{split} if there exist an object $d \in \CS$ and morphisms $i \colon d \to c$ and $r \colon c \to d$ such that $i\circ r = e$ and $r \circ i = \id_{d}$, i.e. $d$ is a \emph{retract} of $c$. A category is called \emph{idempotent-complete} if all idempotents split. 
\end{definition}

\begin{definition}
An \emph{idempotent completion} of a 1-category $\CS$ is a fully faithful functor $\iota_\CS \colon \CS \to \kar \CS$ such that $\kar \CS$ is idempotent-complete and every object $d \in \kar \CS$ is a retract of an object $\iota_\CS(c)$ for $c\in \CS$.
\end{definition}
Equivalently, an idempotent completion of $\CS$ is a functor that is initial among all functors from $\CS$ to idempotent-complete categories. This universal property ensures that idempotent completions of $\CS$ are unique up to equivalence. Existence can be proven via the following concrete construction.

\begin{remark} An idempotent completion $\kar \CS$ of $\CS$ is given by the 1-category consisting of: 
    \begin{itemize}
        \item objects given by pairs $(c, e)$ with $c\in \CS$ and $e\in \End_{\CS}(c)$ idempotent;
        \item morphisms $\text{Hom}_{\kar \CS}((c, e), (d, g)) = \{ f \in \text{Hom}_{\CS}(c, d) \mid gfe = f \}$ and
        \item composition inherited from $\CS$.
    \end{itemize}
    The fully faithful functor $\iota_\CS \colon \CS \to \kar \CS$ is given by sending any object $c \in \CS$ to $(c, \id_c)$ and a morphism $f \colon c \to d $ to itself. To verify that every idempotent splits, let $q \colon (c,e) \to (c,e)$ be an idempotent morphism. Take $i = q$ and $r = q$. Then, $i\circ r = q$ and $r\circ i = q = \id_{(c,e)}$ since $q$ is idempotent. 
\end{remark}
 We will also call the category $\kar \CS$ \emph{the} idempotent completion of $\CS$. Taking the idempotent completion is functorial and itself an idempotent operation. Let $\Cat$ denote the category of all small categories and functors between them.

\begin{proposition}[\cite{H85}]
    Idempotent completion is the idempotent monad
    \[ \kar \colon \Cat \to \Cat\]
    corresponding to the adjunction between the forgetful functor from categories to semicategories and its right adjoint.
\end{proposition}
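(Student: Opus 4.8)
The plan is to build the right adjoint by hand, recognise the induced monad as the explicit model of $\kar$ from the Remark above, and deduce idempotency from the fact that $\kar\CS$ is already idempotent-complete. The relevant ambient category is $\mathrm{SemiCat}$, the category of small \emph{semicategories} --- structures with objects, hom-sets and an associative composition but no chosen identities --- with morphisms the composition-preserving maps (``semifunctors''); let $U\colon\Cat\to\mathrm{SemiCat}$ be the evident forgetful functor. The conceptual point driving the whole argument is that a semifunctor $F\colon U\CS\to U\DS$ need \emph{not} send $\id_c$ to $\id_{Fc}$: since $\id_c\circ\id_c=\id_c$ holds in $U\CS$, all that is forced is that $F(\id_c)$ be \emph{some} idempotent endomorphism of $Fc$. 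So $U$ is far from fully faithful, and a right adjoint must reinstate the forgotten identities by freely adjoining images of such idempotents.

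Next I would construct the candidate right adjoint $R\colon\mathrm{SemiCat}\to\Cat$ as the semicategorical Karoubi envelope: $R(S)$ has as objects the idempotents of $S$ (pairs $(x,e)$ with $e\colon x\to x$ and $e\circ e=e$), hom-sets $\Hom_{R(S)}\bigl((x,e),(y,f)\bigr)=\{\,g\colon x\to y \mid f\circ g\circ e=g\,\}$, composition inherited from $S$, and the identity of $(x,e)$ declared to be $e$ itself. One checks that this is a genuine category (that $e$ is a two-sided unit, and that the hom-conditions are closed under composition) and that $R(\phi)(x,e)=(\phi x,\phi e)$, $R(\phi)(g)=\phi(g)$ makes $R$ a functor. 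There are no size issues, since the idempotents of a small semicategory form a set.

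The technical core is then the adjunction $U\dashv R$, which I would prove by exhibiting a bijection $\Hom_{\mathrm{SemiCat}}(U\CS,S)\cong\Hom_{\Cat}(\CS,R(S))$ natural in both arguments. A semifunctor $F\colon U\CS\to S$ corresponds to the functor $c\mapsto(Fc,F(\id_c))$, $f\mapsto F(f)$; this is a well-defined functor precisely because $F(\id_c)$ is idempotent (as $F$ respects $\id_c\circ\id_c$), because $F(f)$ satisfies $F(\id_d)\circ F(f)\circ F(\id_c)=F(f)$, and because the identity of $(Fc,F\id_c)$ was set to be $F\id_c$. Conversely, a functor $G\colon\CS\to R(S)$ is sent to the composite of $UG$ with the canonical projection $UR(S)\to S$, $(x,e)\mapsto x$, $g\mapsto g$, which is genuinely only a semifunctor because it collapses the chosen identities $e$ onto morphisms of $S$ that need not be identities. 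Verifying that these assignments are mutually inverse and natural, and then repackaging the data as a unit $\eta$ and counit $\varepsilon$ satisfying the triangle identities, is the main obstacle --- not conceptually deep, but where the bookkeeping around the asymmetric treatment of identities by $U$, $R$ and $\varepsilon$ has to be handled carefully.

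Finally I would read off the induced monad $T=R\circ U$ on $\Cat$: by construction $T(\CS)$ has objects the idempotents $(c,e)$ of $\CS$, hom-sets $\{\,f\mid g\circ f\circ e=f\,\}$, and identities $e$ --- exactly the explicit idempotent completion recalled in the Remark --- with unit $\eta_\CS=\iota_\CS\colon c\mapsto(c,\id_c)$, so $T\cong\kar$ and $\eta=\iota$. For idempotency it suffices to show each multiplication $\mu_\CS\colon\kar(\kar\CS)\to\kar\CS$ is an equivalence. The monad triangle identity gives $\mu_\CS\circ\eta_{\kar\CS}=\id_{\kar\CS}$, and $\eta_{\kar\CS}=\iota_{\kar\CS}$ is an equivalence because $\kar\CS$ is idempotent-complete (Remark) --- indeed the fully faithful $\iota_\DS\colon\DS\to\kar\DS$ is essentially surjective exactly when every idempotent of $\DS$ splits. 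Hence $\mu_\CS$ is inverse to the equivalence $\eta_{\kar\CS}$ and so is itself an equivalence, i.e. $\kar$ is an idempotent $2$-monad. (Note $\mu_\CS$ is not an \emph{isomorphism} of categories --- for instance $(c,\id_c)$ with its two obvious lifts into $\kar(\kar\CS)$ have the same image --- so ``idempotent monad'' is meant in the up-to-equivalence sense appropriate to the $2$-category $\Cat$, consistent with the text's earlier assertion that idempotent completion is ``itself an idempotent operation''.)
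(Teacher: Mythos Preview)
The paper does not supply its own proof of this proposition: it is stated with a citation to \cite{H85} and left unproved in the text. Your proposal therefore cannot be compared against an in-paper argument, but it stands on its own as a correct and complete proof of the cited result.

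Your construction of $R$ and the adjunction $U\dashv R$ is the standard one, and the verification that the induced monad $RU$ agrees with the explicit $\kar$ from the preceding Remark is exactly right. The argument for idempotency via the triangle identity $\mu_\CS\circ\eta_{\kar\CS}=\id$ together with the already-established idempotent-completeness of $\kar\CS$ is clean. Your closing caveat that ``idempotent monad'' must be read in the $2$-categorical (up-to-equivalence) sense is both correct and necessary: the multiplication $\mu_\CS$ is genuinely not an isomorphism of categories, only an equivalence, so the statement as written in the paper should indeed be understood this way.
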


\begin{conv}
    By 2-categories we mean bicategories (weak 2-categories) in the sense of \cite{B67}, without strictness assumptions, although we will often suppress associators and unitors. In diagrams, we read 1-morphisms from right to left and 2-morphisms from bottom to top. 
\end{conv}

\begin{definition}
    \label{def:2idempotent}
    Let $\CCS$ be a 2-category. A \emph{2-categorical idempotent} (\emph{2-condensation monad} in \cite[Definition 2.2.1]{GJF19}) in $\CCS$ consists of
    \begin{itemize}
        \item an object $c \in \CCS$;
        \item a 1-morphism $e \colon c \ot c$ and
        \item two 2-morphisms $\mu \colon e \circ e \Rightarrow e$ , $\delta \colon e \Rightarrow e \circ e$
    \end{itemize}
    such that the pair $(e, \mu)$ is a (non-unital) associative algebra, the pair $(e, \delta)$ is a (non-counital) coassociative coalgebra, $\delta$ is an $e$-bimodule map, $\mu \cdot \delta = \id_{e}$, and the \emph{Frobenius relation} expressed by the following commutative diagram holds 

    \[\begin{tikzcd}
	{e \circ e\circ e} && {e \circ e} && {e\circ e \circ e} \\
	&& e \\
	&& {e\circ e}
	\arrow["{\id_e \circ \mu}"',double, from=1-1, to=3-3]
	\arrow["{\delta \circ \id_e}"',double, from=1-3, to=1-1]
	\arrow["{\id_e \circ \delta}",double, from=1-3, to=1-5]
	\arrow["\delta",double, from=1-3, to=2-3]
	\arrow["{\mu \circ \id_e}",double, from=1-5, to=3-3]
	\arrow["\mu",double, from=2-3, to=3-3]
    \end{tikzcd}\]
    We denote the data of a 2-categorical idempotent as a tuple $(c, e, \mu, \delta)$ or, when $e, \mu, \delta$ are clear from context or irrelevant, simply by $c$. 
\end{definition}

The relation $\mu \cdot \delta = \id_{e}$ expresses that $\delta$ defines a section of the multiplication $\mu$ and the commutative diagram says that $\delta$ is a bimodule map. In other words, $(c,\mu)$ is a (non-unital) \emph{separable} algebra.

\begin{definition}
\label{def:manifestlysplit}
    Let $\CCS$ be a 2-category. A \emph{manifestly split 2-categorical idempotent} (\emph{2-condensation} in {\cite[Definition 2.1.1]{GJF19}}) in $\CCS$ consists of
    \begin{itemize}
        \item two objects $c, d \in \CCS$;
        \item two 1-morphisms $f \colon d \ot c$ and $g \colon c \ot d$;
        \item two 2-morphisms $\phi \colon f \circ g \Rightarrow \id_d$ and $\gamma \colon \id_d \Rightarrow f \circ g $ such that $\phi \cdot \gamma = \id_{\id_{d}}$.
    \end{itemize}
\end{definition}

The terminology is justified since for a manifestly split 2-categorical idempotent $(c,d,f,g,\phi,\gamma)$, the data $(c,g\circ f, \mu, \delta)$ with $\mu := \id_g \circ \phi \circ \id_f$ and $\delta:=\id_g \circ \gamma \circ \id_f$ defines a 2-categorical idempotent and we see the object $d$ as a \emph{2-categorical retract} of $c$. 

\begin{definition} 
\label{def:2idcomp}
Let $\CCS$ be a 2-category.
\begin{enumerate}
    \item A \emph{splitting} of a 2-categorical idempotent $(c, e, \mu, \delta)$ in a 2-category $\CCS$ is a manifestly split 2-categorical idempotent $(c,d,f,g,\phi, \gamma)$ together with a 2-isomorphism $\theta \colon g \circ f \cong e$ such that 
    \begin{align*}
        \mu = \theta \cdot (\id_g \circ \phi \circ \id_f) \cdot (\theta^{-1} \circ \theta^{-1})\\
        \delta = (\theta \circ \theta) \cdot (\id_g \circ \gamma \circ \id_f) \cdot \theta^{-1}.
    \end{align*}
    \item A 2-categorical idempotent in $\CCS$ is called \emph{split} if it admits a splitting in $\CCS$. 
\item The 2-category $\CCS$ is called \emph{locally idempotent-complete} if its Hom-categories are idempotent-complete. 
    \item The 2-category $\CCS$ is called \emph{idempotent-complete} (\emph{has all condensates} \cite{GJF19}) if it is locally idempotent-complete and every 2-categorical idempotent is split.
    \end{enumerate}
\end{definition}
For any 2-category $\CCS$, one can replace the $\Hom$-categories by their idempotent completions to obtain a locally idempotent-complete 2-category that we denote by $\kar \CCS$.

The idempotent completion of a locally idempotent-complete 2-category $\CCS$ is designed to be the smallest 2-category containing $\CCS$, in which every 2-categorical idempotent splits. To describe it explicitly, we need a few auxiliary notions.

\begin{definition}
\label{def:bimod}
    Let $c_1\equiv (c_1, e_1, \mu_1, \delta_1)$ and $c_2\equiv (c_2, e_2, \mu_2, \delta_2)$ be two 2-categorical idempotents in a 2-category $\CCS$. A $(c_2, c_1)$-bimodule 
     consists of the data of a 1-morphism $ f \colon c_2 \ot c_1$ and 2-morphisms 
    \begin{align*}
        \nu_f^r \colon f \circ e_1 \Rightarrow f & \qquad \qquad
        \nu_f^l \colon e_2 \circ f \Rightarrow f \\
        \beta_f^r \colon f \Rightarrow f \circ e_1 & \qquad \qquad
        \beta_f^l \colon f \Rightarrow e_2 \circ f 
    \end{align*}
    such that $(f, \nu^r, \nu^l)$ is an associative $(e_2, e_1)$-bimodule, $(f, \beta^r, \beta^l)$ is a coassociative $(e_2, e_1)$-bicomodule, $\nu^r$ and $\beta^l$ commute, $\nu^l$ and $\beta^r$ commute, $\beta^r \cdot \nu^r = \id_f$ and  $\beta^l \cdot \nu^l = \id_f$ and the following diagrams commute 

    \[\begin{tikzcd}[column sep = 5mm, row sep = 5mm]
	{f \circ e_1 \circ e_1} && {f \circ e_1} && {f \circ e_1 \circ e_1} \\
	&& f \\
	&& {f \circ e_1}
	\arrow["{\id_f \circ \mu_1}"',double, from=1-1, to=3-3]
	\arrow["{\beta_f^r \circ \id_{e_{1}}}"',double, from=1-3, to=1-1]
	\arrow["{\id_f \circ\delta_1}",double, from=1-3, to=1-5]
	\arrow["{\nu_f^r}",double, from=1-3, to=2-3]
	\arrow["{\nu_f^r \circ \id_{e_{1}}}",double, from=1-5, to=3-3]
	\arrow["{\beta_f^r}",double, from=2-3, to=3-3]
\end{tikzcd}, \begin{tikzcd}[column sep = 5mm, row sep = 5mm]
	{e_2 \circ e_2 \circ f} && {e_2 \circ f} && {e_2 \circ e_2 \circ f} \\
	&& f \\
	&& { e_2 \circ f}
	\arrow["{\mu_2 \circ \id_f}"', double, from=1-1, to=3-3]
	\arrow["{\id_{e_{2}} \circ \beta_f^l}"',double, from=1-3, to=1-1]
	\arrow["{\delta_2 \circ \id_f}",double, from=1-3, to=1-5]
	\arrow["{\nu_f^l}",double, from=1-3, to=2-3]
	\arrow["{\id_{e_{2}} \circ \nu_f^l}",double, from=1-5, to=3-3]
	\arrow["{\beta_f^l}",double, from=2-3, to=3-3]
\end{tikzcd}.\]

A \emph{bimodule morphism} between $(c_2, c_1)$-bimodules $f,g\colon c_2\ot c_1$ is a $2$-morphism $\alpha\colon f\To g$ in $\CCS$ that intertwines with the $c_2$- and $c_1$-(co-)actions in the obvious ways.

\end{definition}

\begin{construction} 
\label{cons:rel_tensor_prod}
Let $\CCS$ be a locally idempotent-complete 2-category, $c_1,c_2,c_3$ 2-categorical idempotents in $\CCS$, $f \colon c_2 \ot c_1$ a $(c_2, c_1)$-bimodule and $g \colon c_3 \ot c_2$ a $(c_3,c_2)$-bimodule. To obtain a $(c_3, c_1)$-bimodule $g \circ_{e_2} f \colon c_3 \ot c_1$ consider the 2-morphism $\varphi$ given as the following composite:

\[ g\circ f \xRightarrow{\id_g \circ \beta_f^l } g \circ e_2 \circ f \xRightarrow{\nu_g^r \circ \id_{f}} g \circ f   \]

Since $(c_2, e_2, \mu_2, \delta_2)$ is a 2-categorical idempotent and the actions are associative and coassociative, it follows that $\varphi\in \End_{\CCS}(g\circ f)$ is idempotent. Any splitting $h \colon c_3 \ot c_1$ of the idempotent $\phi$, the existence of which is guaranteed by local idempotent-completeness, inherits the structure of a $(c_3, c_1)$-bimodule, and thus defines the composite bimodule $g \circ_{e_2} f$.

\end{construction}

\begin{definition}
    \label{def:2id}
    Let $\CCS$ be a locally idempotent-complete 2-category. The 2-category with 
    \begin{itemize}
        \item objects given by 2-categorical idempotents in $\CCS$;
        \item 1-morphisms given by bimodules as in \Cref{def:bimod};
        \item 2-morphisms given by bimodule morphisms, and
        \item horizontal composition given by relative tensor product as in \Cref{cons:rel_tensor_prod}
    \end{itemize}
    is called the \emph{2-categorical idempotent completion} of $\CCS$ and denoted by $\kkar\CCS$.
\end{definition}

For every locally idempotent-complete 2-category $\CCS$, there is a canonical 2-functor
\[\iota_{\CCS} \colon \CCS \to \kkar \CCS\]
sending each object $c\in \CCS$ to the 2-categorical idempotent $(c, \id_c,*,*)$ and each 1-morphism $f\colon c_2 \ot c_1$ to the bimodule $(f,*,*,*,*)$, where $*$ are unitors that we suppress. Furthermore, the 2-functor $\iota_{\CCS}$ acts as the identity on 2-morphisms, since they are automatically bimodule morphisms.

This $2$-functor is initial among all 2-functors to idempotent-complete 2-categories. This can be turned into an essentially unique characterization of $\kkar \CCS$ by means of a universal property, see e.g. \cite[Definition 1.2.1 and Appendix A.2]{D22}.  We summarize a few important consequences which follow from \cite[Theorem 2.3.11]{GJF19}.

\begin{proposition} 
\label{prop:detectcomplete}
    Let $\CCS$ be a locally idempotent-complete $2$-category. Then
    \begin{enumerate}
        \item $\kkar \CCS$ is idempotent-complete, and
        \item $\iota_{\CCS} \colon \CCS \to \kkar \CCS$ is an equivalence of 2-categories if and only if $\CCS$ is idempotent-complete.
    \end{enumerate}
\end{proposition}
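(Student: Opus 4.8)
Both statements are instances of \cite[Theorem 2.3.11]{GJF19}; the plan is to derive them from the explicit model of $\kkar\CCS$ in \Cref{def:2id}. For part (1), local idempotent-completeness of $\kkar\CCS$ is immediate from the construction: given an idempotent bimodule morphism $\alpha\colon f\To f$ of a bimodule $f\colon c_2\ot c_1$, I would split it as $\alpha=i\cdot r$ in the idempotent-complete category $\Hom_{\CCS}(c_1,c_2)$ and transport the four structure $2$-morphisms of $f$ along $i$ and $r$ (for instance defining the $e_1$-action on the retract $h$ as $h\circ e_1\xRightarrow{i\circ\id_{e_1}}f\circ e_1\xRightarrow{\nu^r_f}f\xRightarrow{r}h$, and similarly for the rest); the bimodule axioms of \Cref{def:bimod} for $h$ hold because $\alpha$ commutes with the $e_i$-(co)actions, so $i$ and $r$ split $\alpha$ in $\kkar\CCS$.

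The heart of part (1) is showing that every $2$-categorical idempotent in $\kkar\CCS$ splits. I would first handle a $2$-categorical idempotent over an object $\iota_\CCS(c)$: unwinding \Cref{def:2idempotent} in $\kkar\CCS$, and noting that the relative tensor product over the trivial algebra $\id_c$ is ordinary composition, such a datum is precisely a $2$-categorical idempotent $(c,e,\mu,\delta)$ in $\CCS$, and I claim the corresponding object of $\kkar\CCS$ is its own condensate. The splitting data of \Cref{def:manifestlysplit} would be $f=g=e$ with nontrivial action $\mu$ and coaction $\delta$ and $\phi=\mu$, $\gamma=\delta$: then $\phi\cdot\gamma=\mu\cdot\delta=\id_e$, the bimodule axioms for $f$ and $g$ are the (co)associativity, separability and Frobenius relations of $(c,e,\mu,\delta)$, and the relative tensor product $g\circ_e f$ of \Cref{cons:rel_tensor_prod} is the splitting of the idempotent $(\mu\circ\id_e)\cdot(\id_e\circ\delta)$ on $e\circ e$, which the Frobenius relation rewrites as $\delta\cdot\mu$, so it splits through $e$ via $\mu$ and $\delta$ and matches the structure maps as \Cref{def:2idcomp}(1) demands. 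For a general $2$-categorical idempotent $(C,E,M,\Delta)$ with $C=(c,e,\mu,\delta)$, I would reduce to this case by composing idempotents: using the section and projection $E\circ E\rightleftarrows E\circ_e E$ of \Cref{cons:rel_tensor_prod}, equip the $1$-morphism $E\colon c\ot c$ of $\CCS$ with $2$-morphisms $\hat\mu,\hat\delta$ (composites of $M$, $\Delta$ with these maps), check that $(c,E,\hat\mu,\hat\delta)$ is a $2$-categorical idempotent in $\CCS$, and verify that this object of $\kkar\CCS$ is the condensate both of $(c,E,\hat\mu,\hat\delta)$ over $\iota_\CCS(c)$ (by the previous case) and of $(C,E,M,\Delta)$.

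For part (2): if $\iota_\CCS$ is an equivalence of $2$-categories, then idempotent-completeness transports back from $\kkar\CCS$ (part (1)) to $\CCS$, since local idempotent-completeness and the splitting of $2$-categorical idempotents are invariant under equivalences. Conversely, assume $\CCS$ is idempotent-complete. I would first note that $\iota_\CCS$ is locally an equivalence: any $(\id_{c_2},\id_{c_1})$-bimodule structure on a $1$-morphism $c_2\ot c_1$ is necessarily the canonical one (a short argument forces the relevant structure $2$-morphisms to be unitors), and bimodule morphisms between such are exactly $2$-morphisms. For essential surjectivity, an object $(c,e,\mu,\delta)$ of $\kkar\CCS$ splits in $\CCS$ by hypothesis, yielding $d\in\CCS$, manifestly split data $(c,d,f,g,\phi,\gamma)$, and $\theta\colon g\circ f\cong e$ as in \Cref{def:2idcomp}; viewing $f$ as an $(\id_d,e)$-bimodule and $g$ as an $(e,\id_d)$-bimodule, with (co)actions assembled from $\phi$, $\gamma$, and $\theta$, I would check, using $\phi\cdot\gamma=\id_{\id_d}$ and the compatibility relations of the splitting, that these bimodules are mutually inverse equivalences $\iota_\CCS(d)\simeq(c,e,\mu,\delta)$ in $\kkar\CCS$. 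Hence $\iota_\CCS$ is essentially surjective and therefore an equivalence.

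I expect the main obstacle to be the coherence bookkeeping in part (1): verifying that the transported, and especially the composed, (co)multiplications satisfy all of the associativity, separability and Frobenius conditions of \Cref{def:2idempotent,def:bimod}, and making precise the composition of $2$-categorical idempotents so that it produces a genuine condensate rather than merely a candidate object. This is exactly what \cite[Theorem 2.3.11]{GJF19} supplies, so in the final write-up one may legitimately defer to it.
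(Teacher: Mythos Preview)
The paper does not supply its own proof of this proposition: it is stated immediately after the sentence ``We summarize a few important consequences which follow from \cite[Theorem 2.3.11]{GJF19}'' and no proof environment appears. Your proposal therefore goes further than the paper, which simply invokes the cited theorem; the sketch you give is a reasonable unpacking of that result in the explicit model of \Cref{def:2id}, and since you yourself end by deferring to \cite[Theorem 2.3.11]{GJF19} for the coherence bookkeeping, your approach and the paper's are ultimately the same.
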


\begin{remark}
\label{rmk:reutterdouglas}
An alternative construction for the idempotent completion of a 2-category $\CCS$, as defined in \cite{DR19}, has 
    \begin{itemize}
        \item objects given by separable monads in $\CCS$;
        \item 1-morphisms given by monad bimodules between them;
        \item 2-morphisms given by bimodule maps between the 1-morphisms.
    \end{itemize}

Here, a monad in $\CCS$ consists of an object $c \in \CCS$, a 1-morphism $e \colon c \ot c$ admitting a 2-morphism $\mu' \colon e \circ e \To e$ and, in contrast to a 2-categorical idempotent, a 2-morphism $\eta \colon \id_c \To e$ such that $(e, \mu', \eta)$ is a \emph{unital} associative algebra. A monad is separable if there exists an $e$-bimodule map $\delta' \colon e \To e \circ e$ such that $\mu' \cdot \delta' = \id_{\id_{e}}$. As in the case of 2-categorical idempotents, there are notions of bimodules between monads, bimodule morphisms between those, and a way of composing bimodules at a separable monad.

By \cite[Theorem 3.3.3]{GJF19}, the above construction is equivalent to the 2-category from Definition \ref{def:2id}. 
\end{remark}

\subsection{Direct sums in 2-idempotent-complete categories}
Here we consider locally additive 2-categories, i.e. 2-categories whose Hom categories are additive. Such 2-categories have a meaningful notion of direct sums of objects, which naturally arises upon considering 2-categorical idempotent completions.

\begin{definition}
Let $\CCS$ be a locally additive 2-category and $x_i\in \CCS$ for $i\in I$ a finite collection of objects. A \emph{direct sum} of the $x_i$ is an object $\boxplus_{i \in I} x_i\in \CCS$ together with inclusion 1-morphisms $\iota_j\colon \boxplus_{i \in I} x_i \ot x_j $ and projection 1-morphisms $\rho_j\colon x_j \ot \boxplus_{i \in I} x_i$ for every $j\in I$ such that:
\begin{itemize}
    \item $\rho_j\circ \iota_j \cong \id_{x_j}$ for every $j\in I$.
    \item $\rho_j\circ \iota_k \cong 0$ for $j\neq k\in I$. 
    \item $\id_{\boxplus_{i \in I} x_i} \cong \bigoplus_{i\in I} \iota_i \circ \rho_i$.
\end{itemize}
\end{definition}

\begin{proposition}[{\cite[Proposition 1.3.16]{DR19}}]
\label{prop:idem_sum}
Let $\CCS$ be a 2-idempotent-complete, locally additive 2-category and $x \in \CCS$ an object. If $\id_x \cong \bigoplus_{i \in I} f_i$ is a finite decomposition of the identity 1-morphism on $x$ into nonzero 1-morphisms, then there is a finite decomposition $x \cong \boxplus_{i \in I} x_i$ of $x$ into nonzero objects $x_i\in \CCS$ with inclusion 1-morphisms $\iota_j\colon \boxplus_{i \in I} x_i \ot x_j $ and projection 1-morphisms $\rho_j\colon x_j \ot \boxplus_{i \in I} x_i$, such that $f_j \cong \iota_j\circ \rho_j$ and $\rho_j \circ \iota_j \cong \id_{x_i}$.
\end{proposition}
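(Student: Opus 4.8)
The plan is to exploit the fact that a decomposition $\id_x \cong \bigoplus_{i\in I} f_i$ of the identity $1$-morphism, together with the chosen direct-sum data in the additive Hom-categories, furnishes a family of idempotent $1$-endomorphisms $p_i := f_i \hookrightarrow \id_x \twoheadrightarrow f_i \subseteq \id_x$, i.e. the composite $\id_x \To f_i \To \id_x$ through the $i$-th summand. Each $p_i$ is a genuine idempotent $2$-morphism-worth of data in $\End_{\CCS}(x)$, but we actually want to split it at the level of \emph{objects}, so the first step is to upgrade each $p_i$ to a $2$-categorical idempotent in the sense of \Cref{def:2idempotent}. Concretely, $p_i\colon x\ot x$ is a $1$-morphism, and the $2$-isomorphisms coming from $p_i\circ p_i \cong p_i$ (which hold because $\id_x$ is the monoidal unit of $\End_{\CCS}(x)$ under horizontal composition, and $f_j\circ f_k \cong 0$ for $j\neq k$ while $f_i\circ f_i\cong f_i$) can be promoted to the multiplication $\mu_i\colon p_i\circ p_i\To p_i$; one takes $\delta_i$ to be the inverse $2$-isomorphism. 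Since $\mu_i$ is invertible, the separability condition $\mu_i\cdot\delta_i = \id_{p_i}$ and the Frobenius relation hold automatically, and associativity/coassociativity follow from coherence. Thus $(x, p_i, \mu_i, \delta_i)$ is a $2$-categorical idempotent.

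Next I would invoke the hypothesis that $\CCS$ is $2$-idempotent-complete: by \Cref{def:2idcomp}(4) each $2$-categorical idempotent $(x,p_i,\mu_i,\delta_i)$ admits a splitting, producing an object $x_i\in\CCS$, $1$-morphisms $\iota_i\colon x\ot x_i$ and $\rho_i\colon x_i\ot x$, and $2$-morphisms exhibiting $\rho_i\circ\iota_i\cong\id_{x_i}$ and $\iota_i\circ\rho_i\cong p_i$ compatibly with $(\mu_i,\delta_i)$. (Local idempotent-completeness, which is part of being idempotent-complete, is what makes \Cref{cons:rel_tensor_prod} and hence the splitting available.) It remains to check that the object $\boxplus_{i\in I} x_i := x$ equipped with these $\iota_i$ and $\rho_i$ satisfies the three axioms of a direct sum from \Cref{def:2id} — wait, from the definition of direct sum preceding \Cref{prop:idem_sum}. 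The first axiom $\rho_j\circ\iota_j\cong\id_{x_j}$ is exactly the splitting relation. For the third, $\id_x \cong \bigoplus_i \iota_i\circ\rho_i \cong \bigoplus_i p_i \cong \id_x$ by construction. The second axiom, $\rho_j\circ\iota_k\cong 0$ for $j\neq k$, is the step requiring a small argument: one computes $\rho_j\circ\iota_k$ and uses that $\iota_j\circ(\rho_j\circ\iota_k)\circ\rho_k \cong p_j\circ p_k \cong f_j\circ f_k \cong 0$ together with the fact that $\iota_j$ and $\rho_k$ are "split injections/surjections" (they have one-sided inverses $\rho_j$, $\iota_k$ up to isomorphism), so composing the vanishing with these retractions forces $\rho_j\circ\iota_k\cong 0$ itself — in an additive Hom-category, a morphism that becomes $0$ after pre- and post-composition with (co)sections is $0$.

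The main obstacle I anticipate is bookkeeping the coherence $2$-isomorphisms: one must check that the putative multiplication $\mu_i$ really is associative and that $\delta_i$ is an $e$-bimodule map, which amounts to verifying that the various $2$-isomorphisms assembled from the direct-sum data in $\End_{\CCS}(x)$ satisfy the pentagon-type compatibilities — all of this is formal but needs the hypothesis that $\CCS$ is a bicategory with well-behaved additive structure on Hom-categories (so that $\oplus$ interacts coherently with horizontal composition). A cleaner route that sidesteps most of this is to simply cite \cite[Proposition 1.3.16]{DR19} and only indicate the translation between their framework (separable monads) and ours (2-categorical idempotents), which is provided by \Cref{rmk:reutterdouglas}: a decomposition of $\id_x$ gives separable \emph{monads} $\id_x \oplus (\text{nothing})$ — more precisely the relevant separable monad structure on each $p_i\oplus\id_x$-type object — and their splitting theorem then applies verbatim. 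Given that the statement is quoted from the literature, I would present the short conceptual argument above and defer the coherence verifications to \cite{DR19}.
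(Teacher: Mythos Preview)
The paper does not supply its own proof of this proposition; it is quoted verbatim from \cite[Proposition~1.3.16]{DR19} and used as a black box. Your sketch is therefore a reconstruction of the cited result rather than something to compare against an argument in the present paper.

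Your strategy is the correct one and matches the standard line of argument: endow each summand $f_i$ with the structure of a $2$-categorical idempotent via the isomorphism $f_i\circ f_i\cong f_i$, split it using $2$-idempotent-completeness, and then verify the direct-sum axioms for the resulting $x_i$. Two points deserve tightening. First, your notation conflates the idempotent $2$-morphism $p_i\colon \id_x\Rightarrow\id_x$ with the $1$-morphism $f_i\colon x\to x$; the underlying $1$-morphism of the $2$-categorical idempotent should be $f_i$, with $\mu_i$ and $\delta_i$ built from the isomorphism $f_i\circ f_i\cong f_i$. Second, the key claim $f_j\circ f_k\cong 0$ for $j\neq k$ is asserted but not justified, and it is not entirely trivial. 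The clean argument is Eckmann--Hilton: under the unitor $\id_x\circ\id_x\cong\id_x$, the horizontal composite $p_j\circ p_k$ of the projection idempotents in $\End(\id_x)$ is identified with their vertical composite $p_j\cdot p_k=\delta_{jk}\,p_j$; since $f_j\circ f_k$ is the image of $p_j\circ p_k$, it vanishes for $j\neq k$. With these clarifications your orthogonality check ($\rho_j\circ\iota_k\cong 0$) and the remaining verifications go through as you indicate.
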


The following consequence will be important in \Cref{sec:foams}.

\begin{proposition}
\label{prop:idem_sumtwo}
   Let $\CCS$ be a 2-idempotent-complete, locally additive 2-category and $x \in \CCS$ an object. If the ring of 2-endomorphisms of $x$ admits a finite decomposition of the form $\End(\id_x) = \bigoplus_{i\in I} A_i$, then the object $x$ admits a direct sum decomposition $x\simeq \bigboxplus_{i \in I} x_i$ with $\End(\id_{x_i})\cong A_i$. 
\end{proposition}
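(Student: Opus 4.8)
The plan is to reduce the statement to \Cref{prop:idem_sum} by promoting the ring-level decomposition $\End(\id_x)=\bigoplus_{i\in I}A_i$ to a decomposition of the identity $1$-morphism $\id_x$ into a direct sum of $1$-morphisms. The first step is to extract from the ring decomposition the corresponding system of central idempotents: the unit $1\in\End(\id_x)$ decomposes as $1=\sum_{i\in I}\varepsilon_i$ with $\varepsilon_i\in A_i$ orthogonal central idempotents. Since $\CCS$ is locally idempotent-complete (being $2$-idempotent-complete), each $\varepsilon_i$, viewed as an idempotent $2$-endomorphism of $\id_x$, splits in the additive category $\End_{\CCS}(x,x)$, yielding $1$-morphisms $f_i\colon x\ot x$ together with $2$-morphisms realising $f_i$ as the image of $\varepsilon_i$. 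Orthogonality and completeness of the $\varepsilon_i$ then give $\id_x\cong\bigoplus_{i\in I}f_i$ as $1$-morphisms, with the $f_i$ nonzero because the $A_i$ are (implicitly) nonzero rings.

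The second step is to feed this decomposition $\id_x\cong\bigoplus_{i\in I}f_i$ into \Cref{prop:idem_sum}, which is available since $\CCS$ is $2$-idempotent-complete and locally additive. This produces a direct sum decomposition $x\simeq\bigboxplus_{i\in I}x_i$ with inclusion and projection $1$-morphisms $\iota_j$, $\rho_j$ satisfying $f_j\cong\iota_j\circ\rho_j$ and $\rho_j\circ\iota_j\cong\id_{x_j}$.

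The third step is to identify $\End(\id_{x_j})$ with $A_j$. The $1$-morphisms $\iota_j,\rho_j$ induce functors between the Hom-categories, and precomposition/postcomposition with the $2$-isomorphism $\rho_j\circ\iota_j\cong\id_{x_j}$ gives, for any $2$-endomorphism $\alpha$ of $\id_{x_j}$, a $2$-endomorphism $\id_{\iota_j}\circ\alpha\circ\id_{\rho_j}$ of $f_j$, hence an element of $\End(\id_x)$ supported on the summand $A_j=\varepsilon_i\End(\id_x)\varepsilon_i$; conversely, restricting along $f_j$ sends an element of $A_j$ back to a $2$-endomorphism of $\id_{x_j}$. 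One checks that these assignments are mutually inverse ring homomorphisms, using that $f_j$ is the image of the idempotent $\varepsilon_j$ so that $\End(f_j)\cong\varepsilon_j\End(\id_x)\varepsilon_j=A_j$, together with the horizontal-composition compatibility (interchange law) that makes this an isomorphism of rings rather than merely of abelian groups.

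\textbf{Main obstacle.} The routine but slightly delicate point is the last step: carefully tracking the unit of the ring $\End(\id_{x_j})$ through the identifications, i.e. checking that the isomorphism $\End(\id_{x_j})\cong A_j$ is unital and multiplicative and not just additive. This requires pinning down the various coherence $2$-isomorphisms (the splitting data for $\varepsilon_j$ and the $2$-isomorphisms $\rho_j\circ\iota_j\cong\id_{x_j}$, $f_j\cong\iota_j\circ\rho_j$ from \Cref{prop:idem_sum}) and verifying they are compatible, so that conjugation by them is an algebra map. Everything else — the splitting of central idempotents and the invocation of \Cref{prop:idem_sum} — is formal.
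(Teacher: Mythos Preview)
Your proposal is correct and follows essentially the same approach as the paper: extract the projection idempotents $p_i$ from the ring decomposition, split them locally to get $1$-morphisms $f_i$ with $\id_x\cong\bigoplus_i f_i$, and then invoke \Cref{prop:idem_sum}. The paper's proof is more terse and leaves the final identification $\End(\id_{x_j})\cong A_j$ implicit, whereas you spell out why this is a ring isomorphism; your discussion of that step is accurate and more careful than what the paper records.
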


\begin{proof}
Let $p_i\in \End(\id_x)$ denote the idempotent projecting onto $A_i$ and $f_i\colon x\ot x$ its image, which exists because $\CCS$ is locally idempotent-complete. Then $\id_{x}$ admits a direct sum decomposition $\id_x\cong \bigoplus_{i\in I} f_i$ with projections and inclusions given by the $p_i$, so the result follows from Proposition \ref{prop:idem_sum}.
\end{proof}

\section{Soergel bimodules and singular Soergel bimodules}
\label{sec:Sbim}

In this section $(W,S)$ will denote a Coxeter system with finite set of generators $S$ and $k$ a commutative ring. We recall the essential features of Soergel bimodules and singular Soergel bimodules and refer to \cite{MR4220642} for a textbook account. The main new result in this section is \Cref{prop:Sbimmonbicat} on semistrict monoidal structures on singular Bott-Samelson bimodules and singular Soergel bimodules in type $A$.

\subsection{Realizations}
\label{sec:real}
 A \emph{realization} $\mathfrak{h} := (V, \{ \alpha_s \colon s \in S \}, \{ \alpha^{\vee}_s \colon s \in S \})$ over $k$ consists of 
\begin{itemize}
    \item a free $k$-module $V$ of finite rank,
    \item a subset $\{ \alpha_s \colon s \in S \} \subset V$,
    \item a subset $\{ \alpha^{\vee}_s \colon s \in S \} \subset \Hom_k(V, k) = V^*$, 
\end{itemize}

with $\langle \alpha_s^\vee , \alpha_s \rangle = 2$ for all $s \in S$, such that the assignment 
    \[ s \mapsto  (v \mapsto v - \langle \alpha_s^\vee , v \rangle \alpha_s) \] defines a representation of $W$ on $V$ and an additional technical condition \cite[Eq. 3.3]{EW16} on the order $m_{st}$ for $s, t \in S, s \neq t$ is satisfied.

    A realization is \emph{reflection faithful} if the action of $W$ on $V$ is faithful and there is a bijection between the set of reflections and the codimension one subspaces of $V$ that are fixed by some element of $W$. 

     A realization is called \emph{balanced} if a certain extra condition on $m_{st}$ is satisfied, see \cite[Definition 3.7]{EW16}. For some comments on what happens to (singular) Soergel bimodules without these assumptions, we refer to \cite[Section 3.6]{EKLP24_demazure}

\begin{definition}
\label{def:r}
    For a reflection faithful realization $\mathfrak{h}$ of $(W,S)$, let
    \[R := \bigoplus_{m \geq 0} S^m({V})\] denote the graded symmetric $k$-algebra on $V$, such that $V$ is in degree 2, which inherits an action by $W$. For a subset $I\subset S$, let $W_I$ denote the parabolic subgroup of $W$ generated by $I$ and by 
    \[ R^{I}:=R^{W_I} := \{ f \in R \mid \forall w\in W_I: w\cdot f = f\} \]
    the subalgebra of $W_I$-invariants in $R$. For $s\in S$, we abbreviate $R^s:=R^{\{s\}}$.
\end{definition}

\begin{definition}
    \label{def:demazure_ops}
    For $s \in S$ we consider the \emph{Demazure operator} 
    \[ \partial_s \colon R \to R^s\langle -2 \rangle \;,\qquad \partial_s(f) := \frac{f - s\cdot f}{\alpha_s}. \]
    More generally, for a \emph{finitary} subset $I \subset S$, i.e. one that generates a finite parabolic subgroup $W_I\subset W$, one obtains a generalized Demazure operator as follows. Let $s_1 \cdots s_d$ be a reduced expression for the longest element $w_I$ of $W_I$. Then the map 
    \[ \partial_I \colon R \to  R^I \langle-2l(w_I) \rangle\;,\qquad \partial_I := \partial_{s_1} \partial_{s_2} \cdots \partial_{s_d}.\]
    is well-defined and independent of the choice of reduced expression, see \cite{D73,EKLP24_demazure}. 
\end{definition}

\begin{definition}
    A realization satisfies \emph{Demazure surjectivity} if $\partial_s$ is surjective onto $R^s$ for all $s \in S$. It satisfies \emph{generalized Demazure surjectivity} if the map $\partial_I$ is surjective onto $R^I$ for all finitary $I \subset S$. 
\end{definition}

\subsection{Frobenius extensions}
\label{sec:Frob}

We recall the notion of a graded Frobenius extension, which underlies some of the key features of Soergel bimodules and singular Soergel bimodules. 
\begin{definition}
    A graded Frobenius extension of degree $l$ is an inclusion of graded commutative rings $X \subset Y$ together with a non-degenerate $X$-linear map 
    \[\delta_X^{Y} \colon Y \to X\]
    called the Frobenius trace, homogenous of degree $-2l$. 
    Non-degeneracy requires $Y$ to be free of finite rank as an $X$-module and that there exist homogeneous finite bases $\{c_i\}_i$ and $\{d_j\}_j$ of $Y$ over $X$ such that 
    \[\delta_X^{Y}(c_i d_j) = \delta_{ij}.\]
\end{definition}

\begin{example}
    \label{ex:bs_frob_ext}

    Consider the Coxeter system of type $A_{n-1}$, which yields a presentation of the symmetric group $S_n$ with generators given by the simple transpositions $(i, i+1)$. We frequently refer to this setting in the text as the \emph{type $A$ setting}. The permutation representation of $S_n$ can be equipped with the structure of a reflection faithful balanced realization. For $k$ a field of characteristic zero, (generalized) Demazure surjectivity is satisfied \cite[Lemma 3.6]{EKLP24_demazure}.  Let $s=(i,i+1) \in S$ denote a simple transposition. Then $R^s \subset R = k[x_1, \ldots, x_n]$ is a graded Frobenius extension of degree $1$ with Frobenius trace given by the Demazure operator, acting on a polynomial $p \in R$ by 
    \[ \partial_s(p) = \frac{p - s(p)}{x_i - x_{i+1}}, \]
    where $s(p)$ denotes the action of $s$ on $p$. A choice of dual bases for this extension is $(1, x_i)$ and $(-x_{i+1}, 1)$

For $J \subset S$ the inclusion $R^J \subset R $ is also a graded Frobenius extension of degree $l(w_J)$ thanks to generalized Demazure surjectivity \cite[Theorem 4.2]{EKLP24_demazure}. For any reduced expression of the longest element $w_J = s_1 \cdots s_d$ in terms of $s_i\in S$, the Frobenius trace of choice is precisely the generalized Demazure operator from \Cref{def:demazure_ops}. 
\end{example}

\subsection{Adjunctions from Frobenius extensions}

The following four bimodule maps can be associated to any (graded) Frobenius extension $X\subset Y$ with trace $\delta^Y_X$ and dual bases $\{c_i\}_i$ and $\{d_j\}_j$:
\begin{equation}
\label{eq:frob_ext_maps}
\begin{aligned}
    i& \colon {}_XX_X 
    \to {}_XY_X  
    \qquad &&1\mapsto 1  \\
   \delta& \colon {}_XY_X 
   \to {}_XX_X 
    \qquad &&p \mapsto \delta_X^{Y}(p)\\
        m& \colon {}_YY \otimes_{X} Y_Y 
        \to {}_YY_Y 
    \qquad &&p \otimes q \mapsto p \cdot q\\
    d& \colon {}_YY_Y 
    \to {}_YY \otimes_X Y_Y
    \qquad &&1 \mapsto \textstyle \sum_i c_i \otimes d_i
\end{aligned} 
\end{equation}

\begin{lemma}
\label{lem:indresadj}
    Let $X \subset Y$ be a graded Frobenius extension of degree $l$ with trace $\delta^Y_X$. Consider the functors 
\begin{equation}
\begin{aligned}
 \Ind_X^Y &\colon X\gmod \to Y\gmod \qquad && M \mapsto {}_YY \otimes_{X} M\\
  \Res_X^Y &\colon Y\gmod \to X\gmod \qquad && M \mapsto {}_XY  \otimes_Y M \langle l \rangle,
\end{aligned} 
\end{equation}
where $\langle l \rangle$ denotes a grading shift by $l\in \Z$. Then  $\Ind_X^Y$ is left adjoint to $\Res_X^Y\langle -l \rangle$ and right adjoint to  $\Res_X^Y\langle l \rangle$. 
\[
\cdots \dashv \Res_X^Y \langle l \rangle\;\;
\dashv \;\; \Ind_X^Y\;\; \dashv\;\;
\Res_X^Y \langle -l \rangle\;\;\dashv\;\;
 \cdots
\]
\end{lemma}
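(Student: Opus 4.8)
\textbf{Proof plan for \Cref{lem:indresadj}.}
The plan is to construct the (co)unit natural transformations explicitly from the four bimodule maps in \eqref{eq:frob_ext_maps} and verify the triangle identities by direct calculation on elements, exploiting the defining relation $\delta^Y_X(c_i d_j) = \delta_{ij}$ of the dual bases. Since $\Ind_X^Y(M) = {}_YY \otimes_X M$ and $\Res_X^Y(M) = {}_XY \otimes_Y M\langle l\rangle$, the functors are nothing but tensoring with the bimodules ${}_YY_X$ and ${}_XY_Y$ (up to a shift), so it suffices to check that these bimodules are biadjoint in the appropriate sense, with the unit/counit obtained from $i, \delta, m, d$ after the necessary grading shifts.

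First I would establish the adjunction $\Ind_X^Y \dashv \Res_X^Y\langle -l\rangle$. The counit $\e\colon \Ind_X^Y \Res_X^Y\langle -l\rangle \To \id$ is induced by the multiplication map $m$: on a module $M$ it sends $y \otimes_X (y' \otimes_Y m) \mapsto yy'\otimes_Y m$, which is degree-preserving once the shift $\langle -l\rangle$ on $\Res$ is accounted for (the Frobenius trace has degree $-2l$, and $m$ has degree $0$; the shift bookkeeping is routine). The unit $\eta\colon \id \To \Res_X^Y\langle -l\rangle\Ind_X^Y$ is induced by the coevaluation $d$ composed with $i$: on $N$ it sends $n \mapsto \sum_i c_i \otimes_Y(1\otimes_X n)$, reorganized as an element of ${}_XY\otimes_Y Y\otimes_X N$. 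The first triangle identity, $(\e \Ind)\circ(\Ind\,\eta) = \id_{\Ind}$, unwinds to the statement $\sum_i c_i \cdot (\text{something}) = (\text{identity})$, which follows from $\sum_i c_i\, \delta^Y_X(d_i\, y) = y$ for all $y\in Y$ — the standard reproducing property of dual bases. The second triangle identity, $(\Res\,\e)\circ(\eta\,\Res) = \id_{\Res}$, is the symmetric statement $\sum_i \delta^Y_X(y\, c_i)\, d_i = y$, again a dual-basis identity. The other adjunction $\Res_X^Y\langle l\rangle \dashv \Ind_X^Y$ is obtained the same way with the roles of the two dual bases swapped (using that $\{d_j\}$ and $\{c_i\}$ also form dual bases in the opposite order, so there is a second coevaluation $1 \mapsto \sum_i d_i \otimes c_i$), and the shift by $\langle l\rangle$ rather than $\langle -l\rangle$ makes the corresponding unit and counit degree-zero. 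Iterating both adjunctions produces the bi-infinite tower of adjoints displayed in the statement, since $\Res_X^Y\langle l\rangle$ and $\Res_X^Y\langle -l\rangle$ differ only by the shift and each alternates with $\Ind_X^Y$.

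The main obstacle I anticipate is purely bookkeeping: getting every grading shift in the right place so that the unit and counit are genuinely degree-zero natural transformations, and confirming that the two reproducing identities $\sum_i c_i\,\delta^Y_X(d_i y) = y$ and $\sum_i \delta^Y_X(y c_i)\,d_i = y$ hold with the bases in the orientation fixed by \eqref{eq:frob_ext_maps} (one of them may require passing to the ``transposed'' dual basis pair, which exists precisely because non-degeneracy of $\delta^Y_X$ is a symmetric condition). Once those identities are in hand, the triangle identities are immediate, and the naturality of $\e$ and $\eta$ is automatic since they are defined by bimodule maps tensored with $M$. No deep input is needed beyond the Frobenius extension structure recalled above; this is why the lemma is stated without further hypotheses on the realization.
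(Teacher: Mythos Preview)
Your approach is essentially the paper's---use the four bimodule maps in \eqref{eq:frob_ext_maps} as unit/counit data---but you have mis-paired them. For the adjunction $\Ind_X^Y \dashv \Res_X^Y\langle -l\rangle$ the unit is simply the inclusion $i\colon {}_XX_X \to {}_XY_X$ (i.e.\ $n\mapsto 1\otimes n$), not anything involving the coevaluation $d$; together with the counit $m$ the triangle identities are then the trivial computations $y\otimes n \mapsto y\otimes 1\otimes n \mapsto (y\cdot 1)\otimes n$ and $m\mapsto 1\otimes m\mapsto 1\cdot m$, and no dual-basis property is needed. The pair $(d,\delta)$ supplies unit and counit for the \emph{other} adjunction $\Res_X^Y\langle l\rangle \dashv \Ind_X^Y$, and it is \emph{there} that the reproducing identities $\sum_i c_i\,\delta^Y_X(d_i y)=y$ and $\sum_i \delta^Y_X(y\, c_i)\,d_i = y$ appear as the two triangle identities. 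With your proposed unit $n\mapsto \sum_i c_i\otimes 1\otimes n$ paired with the counit $m$, the first triangle identity computes to $y\otimes n\mapsto \sum_i (y c_i)\otimes n$, which is not the identity.

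This is the ``bookkeeping'' issue you anticipated, but it is not only a shift convention: the adjunction $\Ind\dashv\Res$ (extension of scalars $\dashv$ restriction) holds for \emph{any} ring extension and uses no Frobenius structure, whereas $\Res\dashv\Ind$ is precisely where the Frobenius trace and dual bases enter. Once the pairings are swapped, your plan goes through and matches the paper's (very terse) proof, which simply records that $(i,m)$ witness the first adjunction and $(\delta,d)$ the second.
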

\begin{proof}
    The maps $i$ and $m$ exhibit $\Ind_X^Y$ as left adjoint to $\Res_X^Y$, while $\delta$ and $d$ exhibit $\Res_X^Y$ as left adjoint to $\Ind_X^Y$. 
\end{proof}

$\Ind_{X}^Y$ and $\Res_X^{Y}$ will often be considered as the bimodules ${}_YY_X$ and ${}_XY_Y \langle l \rangle$, respectively. 

\subsection{Soergel bimodules}
We retain notation and conventions from Section~\ref{sec:Frob}. We fix a Coxeter system $(W,S)$ and assume that we are working with a balanced, reflection faithful realization satisfying generalized Demazure surjectivity. 

\begin{definition}    
    For a simple transposition $s\in S$ consider $B_{s} = R \otimes_{R^s} R \langle 1 \rangle$. Any bimodule of the form
    \[B_{\underline{w}}\langle l \rangle := B_{s_1} \otimes_R B_{s_2} \otimes_R \cdots \otimes_R B_{s_k} \langle l \rangle,\]
    where $\underline{w} = (s_1, s_2 \ldots, s_k)$ is a word in $S$ and $l\in \Z$, is called a \emph{Bott--Samelson bimodule}. Here we allow the empty word $w=\emptyset$ and declare $B_\emptyset:=R$.

    The monoidal \emph{category of Bott--Samelson bimodules} $\BSbim$ is the full subcategory of graded $R$-bimodules with objects given by the Bott--Samelson bimodules. 
\end{definition}

\begin{definition}
    \label{def:sbim} The monoidal \emph{category of Soergel bimodules} is the (graded) additive idempotent completion $\SBim:= \add{(\kar \BSbim)}$ of the category of Bott--Samelson bimodules. Equivalently, it is the full subcategory of graded $R$-bimodules on directs sums of direct summands of shifts of Bott--Samelson bimodules. Its objects are called \emph{Soergel bimodules}.
\end{definition}

\begin{theorem}[Soergel's Categorification Theorem]
    \label{thm:soergel_cat}
    The indecomposable objects of $\SBim$ are classified up to isomorphism by $W \times \Z$. There is an isomorphism of $\Z[q,q^{-1}]$-algebras $H \cong K_0(\SBim)$, where $H$ is the Hecke algebra associated to $W$ and $K_0$ is the split Grothendieck ring. Under this isomorphism, the Kazhdan-Lusztig basis element $C_w$ corresponds to the class of the indecomposable object $B_{w}$, which appears as retract of the Bott--Samelson bimodule $B_{\underline{w}}$ for any reduced expression $\underline{w}$ of $w$ but not as a retract of a Bott--Samelson bimodule for any shorter expression.
\end{theorem}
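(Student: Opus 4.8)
This is the foundational result of the subject, so the ``proof'' really amounts to a citation --- Soergel \cite{S92} in the reflection-faithful case over $\R$, Elias--Williamson \cite{EW16} for general balanced realizations over a field, with a self-contained account in \cite{MR4220642} --- and the plan is to recall the structure of that argument rather than reprove it. The first step is to build a comparison homomorphism $\varepsilon\colon H\to K_0(\SBim)$. Using the Kazhdan--Lusztig presentation of the Hecke algebra with generators $C_s$ ($s\in S$), the assignment $\varepsilon(C_s):=[B_s]$ is checked to respect the defining relations: the quadratic relation $C_s^2=(q+q^{-1})C_s$ becomes the $R$-bimodule isomorphism $B_s\otimes_R B_s\cong B_s\langle 1\rangle\oplus B_s\langle -1\rangle$ coming from the dual-basis description of the Frobenius extension $R^s\subset R$, and the braid relations are verified on the rank-two parabolics, either by decomposing both Bott--Samelson bimodules and matching summands in $K_0$ or by invoking the relations of the Elias--Khovanov--Williamson calculus \cite{EKh10,EW16}. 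Surjectivity of $\varepsilon$ is then immediate: $\SBim$ is Krull--Schmidt, so $K_0(\SBim)$ is free on classes of shifted indecomposables, and every $[B_{\underline w}]=\varepsilon(C_{s_1}\cdots C_{s_k})$ lies in the image, so a Bruhat-order induction shows the same for every indecomposable class.

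The substance is the classification of indecomposables together with injectivity of $\varepsilon$, which I would prove simultaneously by induction on the Bruhat order using the theory of \emph{standard bimodules} $R_x$ (the bimodule $R$ with right action twisted by $x\in W$). Each Bott--Samelson bimodule $B_{\underline w}$ carries a finite filtration by shifted standard bimodules $R_x$ with $x\le w$, built up from the standard ($\Delta$-)filtration of each $B_s$; when $\underline w$ is reduced, $R_w$ occurs exactly once. Soergel's Hom formula --- expressing $\Hom^\bullet(B,B')$ between Soergel bimodules as a graded-free $R$-module whose graded rank is a fixed bilinear pairing of the classes $[B],[B']\in H$ --- then forces $B_{\underline w}$ to have a unique indecomposable summand $B_w$ whose standard filtration involves $R_w$; a support argument on $\Spec R$ shows $B_w$ is not a summand of any $B_{\underline w'}$ with $\ell(\underline w')<\ell(w)$; and the same formula shows the classes $\{[B_w]\}_{w\in W}$ are $\Z[q,q^{-1}]$-linearly independent (their pairing matrix is unitriangular against the standard basis of $H$), which is the injectivity. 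Finally, self-duality of $B_w$ (inherited from that of Bott--Samelson bimodules) together with the degree bounds built into the Hom formula identifies $\varepsilon(C_w)$ with $[B_w]$, i.e. with the Kazhdan--Lusztig basis element.

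The main obstacle, unsurprisingly, is Soergel's Hom formula: this is where the hypotheses on the realization genuinely enter. One controls $\Hom$ of bimodules by restricting to the generic points of $\Spec R$, where extensions of standard bimodules become split, and it is balancedness together with (generalized) Demazure surjectivity --- mediated by the adjunctions of \Cref{lem:indresadj} attached to the Frobenius extensions $R^s\subset R$ --- that make both the restriction argument and the inductive bookkeeping go through. Since all of this is established in \cite{S92,EW16,MR4220642}, for the present paper the theorem is quoted rather than reproved.
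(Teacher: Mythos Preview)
Your assessment is correct: the paper states this theorem without proof, treating it as background and immediately following it only with a remark on the hypotheses. Your outline of the literature argument is accurate and appropriately cited, so there is nothing further to compare.
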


\begin{remark}
    The assumption of a reflection faithful realization was imposed by Soergel in his original definition and the theorem above is proven in this setting. The assumption of a balanced representation does not interact with this result. 
\end{remark}

The monoidal category $\SBim$ can be interpreted as a 2-category with a single object $R$, 1-morphisms given by Soergel bimodules and horizontal composition given by the tensor product over $R$. We denote this 2-category by $\sbim$.

Soergel bimodules admit the following generalization first defined in \cite{W11}. 

\begin{definition}
For a sequence 
\[
I_1 \subset J_1 \supset I_2 \subset J_2 \supset \cdots \supset I_k \subset J_{k} \supset I_{k+1} 
\]
of subsets of $S$ for some $k\in \N_0$ and $l \in \Z$, the $(R^{I_1},R^{I_{k+1}})$-bimodule
\[ R^{I_1} \otimes_{R^{J_1}} R^{I_2} \otimes_{R^{J_2}} \cdots \otimes_{R^{J_k}} R^{I_{k+1}} \langle l \rangle\]
is called a \emph{singular Bott--Samelson bimodule}. 
\end{definition}

It is easy to see that singular Bott--Samuelson bimodules are isomorphic to bimodules constructed as grading shifts of iterated relative tensor products of various induction and restriction bimodules for the inclusions $R^J\subset R^I$ with $I\subset J\subset S$. 

\begin{definition}
    The \emph{2-category of singular Bott--Samelson bimodules} $\sBSbim$  is the $2$-category with: 
    \begin{itemize}
        \item objects given by the graded rings $R^J$ for $J \subset S$; 

        \item the category of 1-morphisms $R^{J} \ot R^{I}$ given by the full subcategory of $R^J\gbimod R^I$ on the singular Bott--Samelson bimodules;
        \item horizontal composition is given by tensor product over intermediate rings. 

    \end{itemize}

\end{definition}

\begin{definition}
    \label{def:ssbim}
   The \emph{2-category of singular Soergel bimodules}, denoted by $\sSbim$, has the same objects as $\sBSbim$ and the Hom categories are obtained as (graded) additive idempotent completions of the Hom categories between singular Bott--Samelson bimodules:
   \[
\Hom_{\sSbim}({R^I, R^J}):= \add{(\kar \Hom_{\sBSbim}({R^I, R^J}))}. 
   \]
   Equivalently, this Hom category can be described as the full subcategory of $R^J\gbimod R^I$ containing all direct sums of direct summands of shifts of 1-morphisms in $\sBSbim$. 
\end{definition}

\begin{theorem}[Soergel-Williamson Categorification Theorem]
    \label{thm:soergel_williamson}
    For $I, J \subset S$, the indecomposable objects of $\Hom_{\sSbim}({R^I, R^J})$ are classified up to isomorphism and grading shift by double cosets $W_J \backslash W / W_I$, with distinguished representatives $\{\prescript{I}{}{}{B_p}^J \mid p \in W_J \backslash W / W_I \}$ for each isomorphism class that decategorify to a basis of the Hecke algebroid.
\end{theorem}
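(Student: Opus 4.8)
The plan is to follow Soergel's proof of \Cref{thm:soergel_cat}, lifted to the double-coset setting in the manner of Williamson. It rests on three ingredients -- a \emph{standard (support) filtration} of singular Bott--Samelson bimodules, a \emph{singular Soergel Hom formula}, and the Krull--Schmidt property of the Hom categories -- which I would establish and then assemble.

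\textbf{Standard filtration.} First I would introduce, for each finitary double coset $p\in W_J\backslash W/W_I$, a \emph{standard bimodule} $\prescript{I}{}{R_p}^J$, the singular analogue of Soergel's twisted bimodule $R_w$: take the minimal-length representative $w_p$ of $p$, form the graph $\Gamma_{w_p}\subset\Spec R\times\Spec R$, push its coordinate ring along the finite maps to $\Spec R^J$ and $\Spec R^I$, and record the resulting $(R^J,R^I)$-bimodule with the grading shift dictated by the codimension. These are free of rank one on each side up to shift; their classes recover the standard basis of the Hecke algebroid (up to an explicit power of $q$ built from Poincar\'e polynomials of $W_I$ and $W_J$); and $\Hom$ between two standards is concentrated in non-negative degrees, one-dimensional in degree zero exactly when the cosets coincide. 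The structural claim I would prove is that every singular Bott--Samelson bimodule admits a finite filtration by shifts of standard bimodules, with multiplicities obtained by expanding its character -- a product of the standard generators attached to the induction and restriction bimodules of the Frobenius extensions $R^J\subset R^I$ -- in the standard basis. This goes by induction on the length of the expression, using that tensoring with an induction or a restriction bimodule preserves the property of having a standard filtration; the inductive step is a relative-tensor-product computation that can be checked locally on the graph subvarieties.

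\textbf{Hom formula.} The technical heart -- and the step I expect to be the main obstacle -- is the singular Soergel Hom formula: for singular Soergel bimodules $B,B'\colon R^I\to R^J$, the graded space $\Hom_{R^J\gbimod R^I}(B,B')$ is free as a left $R^J$-module, with graded rank equal to the value of the standard sesquilinear form on the Hecke algebroid applied to $\overline{\mathrm{ch}(B)}$ and $\mathrm{ch}(B')$. Freeness would follow from the standard filtration (Hom into, or out of, a standardly filtered bimodule over these Frobenius extensions is free). For the rank I would induct on the lengths of expressions for $B$ and $B'$: the adjunctions of \Cref{lem:indresadj} -- available because $R^J\subset R^I$ is a graded Frobenius extension of degree $l(w_J)-l(w_I)$ for finitary $I\subset J$, as in \Cref{ex:bs_frob_ext} -- let one strip an induction or restriction factor off $B$ at the cost of the opposite factor on $B'$ and a compensating grading shift, mirroring precisely the adjointness of the corresponding standard generators under the sesquilinear form; the base case is the identity bimodule, where both sides are $R^J$. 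The difficulty is bookkeeping: the Poincar\'e-polynomial shifts do not cancel as cleanly as in the regular case, one must confirm that the standard filtration is genuinely compatible with the relative tensor products that build singular Bott--Samelson bimodules, and one must verify independence of the chosen singular expression.

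\textbf{Assembly.} The Hom categories of $\sSbim$ are additive and idempotent-complete by construction and have finite-dimensional graded pieces, hence are Krull--Schmidt. I would then induct on the Bruhat order of double cosets. Fixing $p$ and a reduced singular expression $\underline{p}$, the standard filtration gives that $\mathrm{ch}$ of the corresponding singular Bott--Samelson bimodule equals $\prescript{I}{}{H_p}^J$ plus a non-negative combination of the $\prescript{I}{}{H_q}^J$ with $q<p$, the leading coefficient being $1$. The Hom formula then forces the endomorphism ring to be finite-dimensional and, read against the standard filtration, singles out a unique indecomposable summand $\prescript{I}{}{B_p}^J$ of full support $\overline{\Gamma_{w_p}}$ occurring with multiplicity one; every other summand has strictly smaller support and so, by the inductive hypothesis, is a grading shift of some $\prescript{I}{}{B_q}^J$ with $q<p$. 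This shows that the $\prescript{I}{}{B_p}^J$ exist, are well defined up to isomorphism, are pairwise non-isomorphic (distinct support, or distinct leading term for a non-trivial shift), and exhaust the indecomposables up to isomorphism and shift. Finally, the change of basis from the standard basis to $\{[\prescript{I}{}{B_p}^J]\}$ is unitriangular for the Bruhat order, so the latter is a $\Z[q,q^{-1}]$-basis of the corresponding Hom-module of the Hecke algebroid; compatibility of horizontal composition in $\sSbim$ with composition in the Hecke algebroid reduces to the ordinary Soergel case of \Cref{thm:soergel_cat} together with the behaviour of induction and restriction, completing the identification of the decategorification of $\sSbim$ with the Hecke algebroid.
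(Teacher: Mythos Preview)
Your proposal is a reasonable outline of the Soergel--Williamson argument, but note that the paper does not supply its own proof of this statement: \Cref{thm:soergel_williamson} is stated as background and attributed to Williamson~\cite{W11}, and the paper only invokes specific consequences (notably \cite[Theorem~3]{W11} in the proof of \Cref{lemma:isomorphism_bwj}). So there is no ``paper's own proof'' to compare against; the statement functions here as a citation.

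That said, your sketch tracks the structure of Williamson's original proof closely: the construction of standard bimodules supported on graph closures of double cosets, the existence of standard filtrations for singular Bott--Samelson bimodules, the singular Soergel Hom formula proved by stripping induction/restriction factors via the Frobenius adjunctions, and the Krull--Schmidt assembly with unitriangularity in the Bruhat order. The one caveat is that your account is a high-level plan rather than a proof; the genuinely delicate points you flag---compatibility of the standard filtration with relative tensor products, and the Poincar\'e-polynomial bookkeeping in the Hom formula---are real, and in Williamson's treatment they require careful support arguments and the use of Howlett's theorem on intersections of parabolic cosets. If you intend to fill this in, those are the places where the work lies.
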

In the type $A$ setting, Soergel bimodules form a locally $k$-linear monoidal bicategory, as we now recall. For $n\in \N_0$ we let $\BSBim_{n}$ $\SBim_{n}$, $\sBSbim_{n}$ and $\sSbim_{n}$ denote the monoidal categories of Bott-Samelson bimodules and Soergel bimodules as well as the 2-categories of singular Bott-Samelson bimodules and and singular Soergel bimodules, all associated to the permutation representation of the symmetric group $S_n$ in characteristic zero.

\begin{proposition}[{\cite[Proposition 2.13]{SW24}}]
    \label{sbim_monoidal}
    The monoidal Soergel bimodule categories $\SBim_{n}$ for all $n\in \N_0$ assemble into a monoidal bicategory $\msbim$ with 
    \begin{itemize}
        \item objects given by natural numbers $\N_0$;
        \item the endomorphism monoidal category of $n$ is $\SBim_{n}$ and all other Homs are trivial;
        \item the monoidal structure given on objects by $m\boxtimes n:=m+n$ and on morphisms by parabolic induction.

    \end{itemize}

\end{proposition}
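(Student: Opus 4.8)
The plan is to recall the construction underlying \cite[Proposition 2.13]{SW24} and indicate why it produces a monoidal bicategory, the key point being that in type $A$ ``parabolic induction'' is nothing but external tensor product over $k$ followed by a relabelling of polynomial generators. Write $R_n = k[x_1,\dots,x_n]$ for the polynomial ring attached to $S_n$, so $R_0 = k$ and the unit object of $\msbim$ is $0\in\N_0$ (whose endomorphism category $\SBim_0$ is graded $k$-vector spaces). There is a canonical isomorphism of graded $k$-algebras $R_m \otimes_k R_n \cong R_{m+n}$ sending $x_i\otimes 1\mapsto x_i$ and $1\otimes x_j\mapsto x_{m+j}$. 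Given an $R_m$-bimodule $M$ and an $R_n$-bimodule $N$, the graded $k$-module $M\otimes_k N$ carries commuting left and right actions of $R_m\otimes_k R_n$, hence --- transported along this isomorphism --- the structure of an $R_{m+n}$-bimodule; this, together with $\alpha\otimes_k\beta$ on bimodule maps, defines $\boxtimes$ on the Hom-categories (all nontrivial Homs of $\msbim$ being endomorphism categories, and $\boxtimes$ being forced to be trivial on Homs between distinct objects). First I would check that $\boxtimes$ restricts to a functor $\SBim_m\times\SBim_n\to\SBim_{m+n}$: for a simple reflection $s$ of $S_m$ one has $R_{m+n}^{s}\cong R_m^s\otimes_k R_n$, and symmetrically for reflections coming from $S_n$, from which $B_{\underline{w}}\langle l\rangle\boxtimes B_{\underline{v}}\langle l'\rangle\cong B_{\underline{w}\cdot\underline{v}[m]}\langle l+l'\rangle$, where $\underline{v}[m]$ is $\underline{v}$ with every index shifted by $m$; since $\otimes_k$ commutes with grading shifts, finite direct sums and passage to direct summands, Bott--Samelson, and hence Soergel, bimodules are preserved. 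One also notes $\id_m\boxtimes\id_n = R_m\otimes_k R_n\cong R_{m+n} = \id_{m+n}$.

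Next I would verify that $\boxtimes$ is a pseudofunctor $\msbim\times\msbim\to\msbim$. Functoriality on $2$-morphisms is immediate from that of $\otimes_k$; the one piece of genuine structure is compatibility with horizontal composition --- which in $\msbim$ is $\otimes$ over the polynomial ring --- witnessed by the natural interchange isomorphism
\[
(M\otimes_k N)\otimes_{R_{m+n}}(M'\otimes_k N')\ \cong\ (M\otimes_{R_m}M')\otimes_k(N\otimes_{R_n}N'),
\]
which holds because tensoring external products over $R_m\otimes_k R_n$ decomposes as $\otimes_{R_m}$ on the first factor and $\otimes_{R_n}$ on the second. These interchangers satisfy the coherence (cocycle) conditions of a pseudofunctor because they are assembled from the associativity constraints of $\otimes_k$ and of $\otimes$ over polynomial rings, which are themselves coherent.

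Finally I would supply the remaining monoidal-bicategory data --- the associativity pseudonatural equivalence $a$, the unit equivalences $l$ and $r$, the pentagonator $\pi$, the triangulators, and their coherence axioms. The key simplification is that on objects the tensor is strictly associative and unital, $(l+m)+n = l+(m+n)$ and $0+n = n = n+0$; on $1$- and $2$-morphisms $a$ is induced by the associator of $\otimes_k$ together with the (coherently equal) two ways of identifying $R_l\otimes_k R_m\otimes_k R_n$ with $R_{l+m+n}$, while $l$ and $r$ come from $k\otimes_k(-)\cong(-)\cong(-)\otimes_k k$ using $R_0 = k$. All higher coherence cells can then be taken to be induced from those of the symmetric monoidal structure on graded $k$-modules, so that every axiom of a monoidal bicategory reduces to the corresponding coherence identity for $\otimes_k$. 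I expect the main obstacle to be purely organizational: marshalling the large amount of coherence data required of a monoidal bicategory. The cleanest route is to work inside a strictification of the bicategory of graded bimodules over polynomial rings, in which $\otimes_k$ and concatenation of variable sets are strictly associative and unital; there the associators and unitors of $\msbim$ become identities and the coherence axioms hold trivially, so that the only genuine content is the first step --- that external tensor preserves Soergel bimodules and interacts coherently with horizontal composition.
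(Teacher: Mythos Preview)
Your proposal is correct and essentially matches the approach of \cite{SW24}, which the paper cites for this statement without supplying its own proof. The only cosmetic difference is packaging: you work directly with the external tensor product $M\otimes_k N$ transported along $R_m\otimes_k R_n\cong R_{m+n}$, whereas \cite{SW24} (and the paper's singular analogue in the subsequent definitions) phrase $\boxtimes$ via variable-shifting functors, setting $M\boxtimes N := j_{0|n}(M)\otimes_{R_{m+n}} j_{m|0}(N)$; these agree since $(M\otimes_k R_n)\otimes_{R_{m+n}}(R_m\otimes_k N)\cong M\otimes_k N$. Your interchange isomorphism and reduction of all coherence to that of $\otimes_k$ is exactly the content of \cite[Lemma~2.12]{SW24}, and your observation that strictness on objects trivializes most of the monoidal-bicategory data is the same shortcut used there.
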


In the type $A$ setting, singular Soergel bimodules also form such a monoidal bicategory. We develop the construction sketched in \cite{HRW21} by adapting tools from \cite{SW24} to the singular setting. In the following constructions, when $p\in\N_0$ we let $R_p$ be the ring of polynomials in $p$ variables $k[x_1, \ldots,  x_p]$.

\begin{definition}
    Given $a, b, c \in  \N$, the \emph{variable shifting morphisms} are the algebra homomorphisms 
    \[
    j_{a|c} \colon R_b \to R_{a + b+ c}\;,\qquad x_i \mapsto x_{i+a}
    \]
    If $R_b^{J}$ is the ring of invariants under the parabolic subgroup generated by $J = \{s_{i_1}, \ldots , s_{i_l} \}$ of ${S_b}$, then $j_{a|c}$ restricts to an algebra homomorphism   
    \begin{align*}
        j^{J}_{a|c} \colon R_b^J \to R^{aJc}_{a+b+c}
    \end{align*}
    for the set of simple transpositions $aJc =\{s_{{(i+a)}_1}, \ldots , s_{(i+a)_l} \} \subset S_{a+b+c}$. We may simplify this notation in the following cases: $Jc:=0Jc$ and $aJ:=aJ0$ and $J:=0J0$.
     Likewise, if $R_a^A$ and $R_c^C$ are rings of invariants corresponding to parabolic subgroups $W^A \subset S_a$ and $W^C \subset S_c$, then the above algebra homomorphism factors through
     \[R^{AJC}_{a+b+c}:=R^{A(b+c) \cup aBc \cup (a+b)C}_{a+b+c}\]
     
\end{definition}

\begin{definition}
For an $(R_b^I,R_b^J)$-bimodule $M$ we consider the tensor product $R_a\otimes_k M \otimes_k R_c$, which is, via the obvious algebra isomorphisms, an $(R^{aIc}_{a+b+c},R^{aJc}_{a+b+c})$-bimodule. Likewise, we consider $R^A_a\otimes_k M \otimes_k R^C_c$, which is naturally an $(R^{AIC}_{a+b+c},R^{AJC}_{a+b+c})$-bimodule.
\end{definition}
These constructions define functors $j_{a|c}$ from $(R_b^I,R_b^J)$-bimodules to $(R^{aIc}_{a+b+c},R^{aJc}_{a+b+c})$-bimodules and functors $j_{A|C}$ from $(R_b^I,R_b^J)$-bimodules to $(R^{AIC}_{a+b+c},R^{AJC}_{a+b+c})$-bimodules.  All such functors are compatible with horizontal composition. Thus we get induced 2-functors:
\[
j_{a|c}, j_{A|C} \colon \sBSbim_b \to \sBSbim_{a+b+c}, \quad j_{a|c}, j_{A|C} \colon \sSbim_b \to \sSbim_{a+b+c}
\]
that act on objects by the index-shift $J\mapsto aJc$ resp. $J\mapsto A(b+c) \cup aJc \cup (a+b)C$.

\begin{definition}\label{monoidalstr}
Let $M$ be a $(R_m^K, R_m^I)$-bimodule and $N$ a $(R_n^L, R_n^J)$-bimodule. We define the $\boxtimes$-product of $M$ and $N$ as the $(R^{Kn\cup mL}_{m+n},R^{In\cup mJ}_{m+n})$-bimodule
\[M_m\boxtimes N_n := j_{0|L}(M_m) \otimes_{R^{In\cup mL}_{m+n}} j_{I|0}(N_n) \] 
and assign to a pair $f\colon M_m\to M'_m$, $g\colon N_n\to N'_n$ of bimodule morphisms the morphism
\[f \boxtimes g:= j_{0|L}(f) \otimes j_{I|0}(g) \colon M_m\boxtimes N_n \to M'_m\boxtimes N'_n\]
of bimodules. It is immediate that $\boxtimes$ is functorial in both arguments and sends pairs of singular Bott-Samelson bimodules to singular Bott-Samelson bimodules and likewise for singular Soergel bimodules. We call the resulting functors \emph{parabolic induction}:
\begin{equation}
    \label{eq:parabolicind}
    \boxtimes \colon \sBSbim_m \times \sBSbim_n \to \sBSbim_{m+n}
    \quad\text{and}\quad
    \boxtimes \colon \sSbim_m \times \sSbim_n \to \sSbim_{m+n} .
\end{equation}

\end{definition}

As in \cite[Lemma 2.12]{SW24} one can now provide explicit interchange isomorphism. As a consequence, one obtains the following singular analogue of \cite[Proposition 2.13]{SW24}.
    
\begin{proposition}
    \label{prop:Sbimmonbicat}
The parabolic induction functors \eqref{eq:parabolicind} endow the \emph{bicategory of singular Soergel bimodules}
\begin{equation}\label{SBimDef}
\mssbim:= \bigsqcup_{n\geq 0} \sSbim_n
\end{equation}
with the structure of a locally $k$-linear monoidal bicategory with
monoidal product denoted $\boxtimes$. By restriction, the same result holds for the \emph{bicategory of singular Bott--Samelson bimodules} $\msbsbim= \bigsqcup_{n\geq 0} \sBSbim_n $.
\end{proposition}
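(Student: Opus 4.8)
The plan is to mimic the strategy used in \cite[Proposition 2.13]{SW24} for the non-singular case, the key point being that all the structural coherences we need are inherited from well-behaved operations on the ambient bicategories of graded bimodules. First I would observe that for each $n$, $\sBSbim_n$ and $\sSbim_n$ are honest 2-categories (bicategories), being full sub-2-categories of the 2-category of graded commutative $k$-algebras, graded bimodules, and bimodule maps; the disjoint union over $n\geq 0$ of these is again a bicategory with no 1-morphisms between objects of different $n$-strata, so $\mssbim$ and $\msbsbim$ are bicategories before we even discuss the monoidal structure. The monoidal product $\boxtimes$ on objects is $m \boxtimes n := m+n$ together with the index-shift on subsets $J \subset S_n$; on 1- and 2-morphisms it is the parabolic induction functor of \Cref{monoidalstr}, which we have already checked is functorial and lands in the appropriate sub-2-category. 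The task is therefore to produce the coherence data of a monoidal bicategory: the associator, the unitors, the pentagonator and the two triangulators, and the interchange 2-isomorphisms making $\boxtimes$ into a pseudofunctor $\mssbim \times \mssbim \to \mssbim$, and to verify the monoidal bicategory axioms.

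Concretely, the steps I would carry out are: (1) unwind the composite $(M_m \boxtimes N_n)\boxtimes P_p$ and $M_m\boxtimes(N_n\boxtimes P_p)$ using the definition via $j_{a|c}$ and $j_{A|C}$ and the iterated relative tensor products, and write down the canonical associativity isomorphism of iterated tensor products over the intermediate rings $R^{\bullet}_{m+n+p}$; this gives the associator, and its naturality in all three arguments is automatic because the ordinary associativity isomorphism for $\otimes$ is natural. (2) Take $R_0 = k$ as the monoidal unit object $0$ and use the evident $R\otimes_k M \cong M \cong M \otimes_k R$ identifications for the left and right unitors. (3) Construct the interchange 2-isomorphisms $(f\boxtimes g)\circ(f'\boxtimes g') \cong (f\circ f')\boxtimes(g\circ g')$ and the compatibility with horizontal identities: here one uses precisely the fact, already noted in the excerpt, that the variable-shifting functors $j_{a|c}$, $j_{A|C}$ are compatible with horizontal composition, together with the explicit interchange isomorphisms of \cite[Lemma 2.12]{SW24} adapted to the singular setting --- the content of the sentence ``As in \cite[Lemma 2.12]{SW24} one can now provide explicit interchange isomorphism'' in the excerpt. (4) Verify that all the resulting coherence 2-morphisms satisfy the pentagon, triangle, and the higher monoidal bicategory axioms; since every coherence cell in sight is built from associativity/unit isomorphisms of $\otimes_{R^\bullet}$ between graded bimodules, and these already satisfy Mac Lane's coherence, every diagram we must check commutes by the coherence theorem for the symmetric monoidal structure of the tensor product of bimodules. (5) Finally observe that the restriction to singular Bott--Samelson bimodules is closed under $\boxtimes$ and all the coherence cells, giving the statement for $\msbsbim$ as a sub-monoidal-bicategory.

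The main obstacle --- or at least the only step that requires genuine care rather than invocation of coherence --- is step (3): making the interchange isomorphisms explicit and checking their naturality and the middle-four-interchange coherence. In the non-singular case of \cite{SW24} this is \cite[Lemma 2.12]{SW24}, and the subtlety is purely bookkeeping with the index shifts $aJc$ and the composite index sets $A(b+c)\cup aBc\cup (a+b)C$: one must check that the two ways of interleaving the variable-shifting homomorphisms $j_{a|c}$ associated to a horizontal composite agree up to a canonical isomorphism of bimodules, and that this isomorphism is compatible with further composition. This is where the bulk of \cite[Lemma 2.12]{SW24}'s proof goes in the regular case, and in the singular case one simply has the extra parabolic decorations to track. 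Since the underlying bimodule isomorphisms are all instances of reassociating and reindexing tensor factors, no new phenomenon arises and the verification, while tedious, is routine; I would present it as a direct adaptation of \cite[Lemma 2.12, Proposition 2.13]{SW24} rather than reproving coherence from scratch.
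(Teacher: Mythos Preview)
Your proposal is correct and matches the paper's own approach exactly: the paper does not give a detailed proof either, but simply states (in the sentence immediately preceding the proposition) that one provides explicit interchange isomorphisms as in \cite[Lemma 2.12]{SW24} and then obtains the result as the singular analogue of \cite[Proposition 2.13]{SW24}. Your outline is in fact more detailed than what the paper records, and correctly identifies the interchange isomorphism (your step (3)) as the only place requiring genuine bookkeeping.
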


\section{Higher idempotents in Soergel bimodules}

In this section we return to considering (singular) Soergel bimodules for a fixed Coxeter system $(W,S)$ with respect to a balanced and reflection faithful realization satisfying generalized Demazure surjectivity.
Our goal now is to understand and relate the 2-categorical idempotent completions of the locally idempotent-complete 2-categories $\sbim$ and $\sSbim$. 

\subsection{Manifestly split 2-categorical idempotents from parabolics}
\label{sec:ssbimcompl}

For the following, let $J$ be a finitary subset and let $R^J$ denote its associated ring of invariants.

\begin{lemma}
\label{lem:manifestly}
  The following data defines a manifestly split 2-categorical idempotent (in the sense of \Cref{def:manifestlysplit}) in  $\sSbim$:
  \begin{itemize}
      \item the two objects $R$ and $R^J$;
      \item the restriction and induction bimodules ${}_{R^J}R_{R}$  and $g:={}_{R}R_{R^J}\langle l(w_J)\rangle$;
      \item the bimodule homomorphisms:
      \begin{equation}
    \label{eq:manifestly-split-maps}
    \begin{aligned}
       \phi \colon  {}_{R^J}R_{R^J} \to {}_{R^J}R^J_{R^J}, \qquad  p\mapsto \partial_J(p)
                \\
           \gamma\colon {}_{R^J}R^J_{R^J} \to {}_{R^J}R_{R^J}, \qquad 1 \mapsto \frac{\alpha_J}{|W_J|}
    \end{aligned}
\end{equation}
where $\alpha_J\in R$ denotes the polynomial obtained as the product of all positive roots of $W_J$. 
  \end{itemize}
\end{lemma}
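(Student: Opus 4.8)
The plan is to verify the three conditions of \Cref{def:manifestlysplit} directly for $c:=R$, $d:=R^J$, $f:={}_{R^J}R_R$, $g:={}_RR_{R^J}\langle l(w_J)\rangle$, and the $2$-morphisms $\phi,\gamma$ of \eqref{eq:manifestly-split-maps}. First I would unwind the composite $1$-morphism $f\circ g$. Reading $1$-morphisms right to left, $f\circ g$ is the relative tensor product ${}_{R^J}R_R\otimes_R{}_RR_{R^J}\langle l(w_J)\rangle$, i.e. the composite $\Res_{R^J}^R\circ\Ind_{R^J}^R$ with the grading shift placed on the induction side; collapsing the middle factor gives a canonical bimodule isomorphism $f\circ g\cong{}_{R^J}R_{R^J}\langle l(w_J)\rangle$. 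This is (a shift of) a singular Bott--Samelson bimodule, hence genuinely a $1$-morphism $R^J\ot R^J$ in $\sSbim$, and under this identification $\phi$ and $\gamma$ are $2$-morphisms with the domains and codomains claimed. (The shift on $g$ is the one making the associated idempotent $g\circ f\cong R\otimes_{R^J}R\langle l(w_J)\rangle$ self-dual; correspondingly $\phi$ and $\gamma$ are homogeneous of complementary degrees $\mp l(w_J)$, so that the vertical composite $\phi\cdot\gamma$ has degree zero, as it must.)

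Next I would check that $\phi=\partial_J$ and $\gamma$ are bimodule homomorphisms. For $\phi$ this is the standard bilinearity of the generalized Demazure operator: writing $\partial_J=\partial_{s_1}\cdots\partial_{s_d}$ for a reduced expression $w_J=s_1\cdots s_d$ (well-defined by \Cref{def:demazure_ops}), the Leibniz rule $\partial_s(ab)=\partial_s(a)\,b+s(a)\,\partial_s(b)$ and the identity $\partial_s|_{R^s}=0$ give $\partial_s(ra)=r\,\partial_s(a)$ and $\partial_s(ar)=\partial_s(a)\,r$ for $r\in R^s$; since $R^J\subseteq R^{s_i}$ for each $i$, iterating shows $\partial_J$ is a map of $(R^J,R^J)$-bimodules. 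For $\gamma$ there is nothing to check beyond well-definedness: ${}_{R^J}R^J_{R^J}$ is the free rank-one $(R^J,R^J)$-bimodule on the unit, so prescribing $1\mapsto\alpha_J/|W_J|$ extends uniquely to a bimodule map into ${}_{R^J}R_{R^J}\langle l(w_J)\rangle$, using that $R$ is commutative to match the two actions; the expression $\alpha_J/|W_J|$ is legitimate since $|W_J|$ is invertible in $k$ under our standing hypotheses.

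Finally, the remaining condition $\phi\cdot\gamma=\id_{\id_{R^J}}$ unwinds to $\partial_J(\gamma(1))=1$ in $R^J$, i.e. to the single scalar identity $\partial_J(\alpha_J)=|W_J|$. I would obtain this from the classical presentation of the longest Demazure operator as a skew-symmetrization, $\partial_{w_J}(p)=\alpha_J^{-1}\sum_{w\in W_J}(-1)^{\ell(w)}w(p)$ (see \cite{D73, EKLP24_demazure} and references therein), together with the fact that every $w\in W_J$ carries exactly $\ell(w)$ of the positive roots of $W_J$ to negative roots while permuting the rest, so that $w(\alpha_J)=(-1)^{\ell(w)}\alpha_J$. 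Substituting $p=\alpha_J$ then gives $\partial_{w_J}(\alpha_J)=\alpha_J^{-1}\sum_{w\in W_J}(-1)^{\ell(w)}(-1)^{\ell(w)}\alpha_J=|W_J|$, hence $\phi(\gamma(1))=|W_J|^{-1}\,\partial_J(\alpha_J)=1$.

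The identification of $f\circ g$ and the bimodule-map checks are formal once the grading conventions are pinned down; I expect the one step genuinely worth care to be the last, namely the precise normalization $\partial_J(\alpha_J)=|W_J|$ (so that $\gamma$ really is a section of $\phi$) and the accompanying requirement that $|W_J|$ be a unit in $k$ in the settings under consideration.
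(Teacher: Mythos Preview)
Your proof is correct and follows essentially the same approach as the paper: the paper's proof consists of a single sentence citing the identity $\partial_J(\alpha_J)=|W_J|$ (with references to \cite{D73} and \cite[Theorem 3.4]{EKLP24_demazure}), which is exactly the crux you isolate in your final step. Your additional verifications (identifying $f\circ g$, checking bilinearity of $\partial_J$, well-definedness of $\gamma$) are routine and implicitly assumed by the paper, and your derivation of $\partial_J(\alpha_J)=|W_J|$ via the skew-symmetrization formula is a nice bonus.
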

\begin{proof}
The relation $\phi\cdot \gamma=\id_{\id_d}$ follows from $\partial_J(\alpha_J) = |W_J|$, see \cite[Theorem 3.4]{EKLP24_demazure}, \cite{D73}. 
\end{proof}

In particular, \Cref{lem:manifestly} implies that the singular Bott--Samelson bimodule 
\[B_J:={}_R R \otimes_{R^J} R_R\langle l(w_J)\rangle\]
determines a 2-categorical idempotent $(R, B_J, \mu, \delta)$ in $\sSbim$ with 2-morphisms: 
\begin{equation}
    \label{eq:2-idempotent-maps}
    \begin{aligned}
        \mu \colon  R \otimes_{R^J} R \otimes_{R} R \otimes_{R^J} R &\to  R \otimes_{R^J} R &   \delta \colon  R \otimes_{R^J} R &\to R \otimes_{R^J} R \otimes_{R^J} R &  \\
        f \otimes g \otimes h &\mapsto \partial_J(g)f \otimes h & \qquad    f \otimes g &\mapsto   f \otimes \frac{\alpha_J}{|W_J|} \otimes g
    \end{aligned}
\end{equation}

We thus see that the object $R^J$ is a 2-categorical retract of $R$ obtained by splitting the above 2-categorical idempotent on $B_J$. In fact, the latter already lives inside  Soergel bimodules.

\begin{lemma}
    \label{lemma:isomorphism_bwj}
    The singular Bott--Samelson bimodule $B_J$ is isomorphic to the indecomposable Soergel bimodule $B_{w_J}$ in the category of graded $R$-bimodules. 
\end{lemma}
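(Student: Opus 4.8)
The plan is to establish that $B_J$ is an indecomposable Soergel bimodule and then to identify its isomorphism class by a degree count together with Soergel's classification (\Cref{thm:soergel_cat}). The first input is that $B_J$ really is a Soergel bimodule: fixing a reduced expression $w_J = s_1\cdots s_d$, so $d = l(w_J)$, one has $R^J \subseteq R^{s_i}$ for all $i$ and, classically, $R\otimes_{R^J}R\langle l(w_J)\rangle$ occurs as a direct summand of the Bott--Samelson bimodule $B_{\underline{w}_J} = B_{s_1}\otimes_R\cdots\otimes_R B_{s_d}$ --- equivalently, $R\otimes_{R^J}R$ carries standard and costandard bimodule flags, so Soergel's structure theorem applies. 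I would cite \cite{W11} and \cite{MR4220642} for this; it is the only step that leans on external structural results.

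Second, I would show $B_J$ is indecomposable by computing its endomorphism ring. Since $R$ is commutative, $(R,R)$-bimodule endomorphisms of $B_J$ are the same as endomorphisms of $R\otimes_{R^J}R$ as a module over $R\otimes_k R$, and $R\otimes_{R^J}R = (R\otimes_k R)/I$ with $I = (a\otimes 1 - 1\otimes a \mid a\in R^J)$ is a cyclic $R\otimes_k R$-module; hence $\End_{R\text{-}R}(B_J) \cong R\otimes_{R^J}R$ as graded $k$-algebras. As $R$, and therefore $R\otimes_{R^J}R$, is concentrated in non-negative degrees with degree-zero part $k$, the ring $\End(B_J)$ is graded local with residue field $k$, so $B_J$ is indecomposable. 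By \Cref{thm:soergel_cat}, $B_J \cong B_x\langle m\rangle$ for some $x\in W$ and $m\in\Z$.

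Third, I would pin down $x$ and $m$. By \Cref{ex:bs_frob_ext} the extension $R^J\subset R$ is graded Frobenius of degree $l(w_J)$ with trace $\partial_J$; tracking this degree-$(-2l(w_J))$ trace through the unit/counit maps \eqref{eq:frob_ext_maps} identifies the graded bimodule dual of $R\otimes_{R^J}R$ with $R\otimes_{R^J}R\langle 2l(w_J)\rangle$, so $B_J = R\otimes_{R^J}R\langle l(w_J)\rangle$ is self-dual. Since the indecomposable Soergel bimodules $B_x$ are self-dual, this forces $m = 0$. For $x$ itself: the lowest nonzero degree of $B_J$ is $-l(w_J)$, realised by $1\otimes 1$, while $B_x$ has lowest degree $-l(x)$, so $l(x) = l(w_J)$; and since $B_x \cong B_J$ is a direct summand of $B_{\underline{w}_J}$ we have $x \leq w_J$ in Bruhat order. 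Together these give $x = w_J$, so $B_J\cong B_{w_J}$.

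The main obstacle is Step 1 --- knowing a priori that $R\otimes_{R^J}R$ belongs to $\SBim$; the remaining steps are bookkeeping of grading shifts, with the self-duality computation the most delicate point. An alternative to the self-duality argument is to compute directly that $[B_J] = C_{w_J}$ in $K_0(\SBim)\cong H$ and to invoke the linear independence of the Kazhdan--Lusztig basis $\{C_w\}$ together with indecomposability to conclude $B_J \cong B_{w_J}$.
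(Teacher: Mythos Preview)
Your argument is correct and takes a genuinely different route from the paper. The paper's proof is a one-line application of \cite[Theorem~3]{W11}, which gives $R\otimes_{R^I}\,{}^I\!B_p^J\otimes_{R^J}R\cong B_{p_+}$ for every double coset $p$; specialising to $I=J$ and $p=W_JeW_J$ (so that ${}^J\!B_e^J\cong R^J$ and $p_+=w_J$) yields the lemma immediately. You instead compute $\End_{R\text{-}R}(B_J)\cong R\otimes_{R^J}R$ via cyclicity to obtain indecomposability, and then locate $B_J$ among the $B_x\langle m\rangle$ using self-duality together with the lowest-degree and Bruhat-support constraints. This is more hands-on and, as a bonus, explains why the shift by $l(w_J)$ in the definition of $B_J$ is the canonical one. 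The trade-off is that your Step~1 still imports a nontrivial input --- that $B_J$ is a summand of $B_{\underline{w}_J}$ --- and the parenthetical justification via standard and costandard flags is not quite a proof on its own (having both flags is not by itself a membership criterion for $\SBim$). A more direct reference here is the explicit idempotent producing this splitting, as in the thick calculus of \cite{E16} or in Soergel's original work, rather than the general machinery of \cite{W11} that the paper invokes.
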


\begin{proof} \cite[Theorem 3]{W11} states that for $I, J \subset S$, $p  \in W_{I} \backslash W / W_J$ and $p_{+}$ the unique longest element of $p$, there is an isomorphism
    \begin{align*}
        R \otimes_{R^I} \prescript{I}{}{}{B_p}^J \otimes_{R^J} R \cong B_{p_{+}}
    \end{align*}
    in the category of graded $R$-bimodules. We apply this isomorphism in the case when $p \in W_J \backslash W / W_J$ is the double coset $W_J e  W_J$ of the identity. Then $B_{p_{+}} = B_{w_J}$. The isomorphism $\prescript{J}{}{}{B_{e}}^J \cong R^J$ follows from \cite[Section 7.4]{W11}.
\end{proof}

We immediately obtain the following corollary.

\begin{corollary}
\label{cor:insbim}
    Every object $R^J$ of $\sSbim$ is a 2-categorical retract of a 2-categorical idempotent $(R, B_J, \mu, \delta)$ in $\sbim$.
\end{corollary}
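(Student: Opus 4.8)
The statement to prove is \Cref{cor:insbim}: every object $R^J$ of $\sSbim$ is a 2-categorical retract of a 2-categorical idempotent $(R, B_J, \mu, \delta)$ living in $\sbim$. Almost all of the work has already been done, so the proof should mostly be an assembly of the preceding results. The plan is to combine \Cref{lem:manifestly}, the explicit 2-idempotent $(R, B_J, \mu, \delta)$ constructed just after it in \eqref{eq:2-idempotent-maps}, and \Cref{lemma:isomorphism_bwj}.

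\textbf{Key steps.} First I would recall that \Cref{lem:manifestly} exhibits $R^J$ as a 2-categorical retract of $R$ inside $\sSbim$ via the manifestly split 2-categorical idempotent with 1-morphisms ${}_{R^J}R_R$ and ${}_R R_{R^J}\langle l(w_J)\rangle$; by the discussion following \Cref{def:manifestlysplit}, the associated 2-categorical idempotent is $(R, B_J, \mu, \delta)$ with $B_J = {}_R R\otimes_{R^J} R_R\langle l(w_J)\rangle$ and $\mu, \delta$ as in \eqref{eq:2-idempotent-maps}. Second, I would invoke \Cref{lemma:isomorphism_bwj}: the $R$-bimodule $B_J$ is isomorphic to the indecomposable Soergel bimodule $B_{w_J}$, hence $B_J$ is a $1$-morphism in $\sbim$ (not merely in $\sSbim$). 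Third, I would note that the structure $2$-morphisms $\mu$ and $\delta$ are honest $R$-bimodule maps $B_J\otimes_R B_J \To B_J$ and $B_J\To B_J\otimes_R B_J$, i.e. $2$-morphisms in $\sbim$, so the entire tuple $(R, B_J, \mu, \delta)$ is a 2-categorical idempotent \emph{in $\sbim$}. Finally, transporting the splitting data $(R^J, {}_{R^J}R_R, {}_R R_{R^J}\langle l(w_J)\rangle, \phi, \gamma)$ of \Cref{lem:manifestly} along the identification $B_J\cong B_{w_J}$ shows that $R^J$ is a 2-categorical retract of this idempotent. One subtlety to address carefully: the retract data involves the object $R^J$ and the 1-morphisms ${}_{R^J}R_R$, ${}_R R_{R^J}$, which are 1-morphisms in $\sSbim$ but not in $\sbim$ — but that is exactly the point of the statement: the \emph{idempotent} lives in $\sbim$ while the \emph{retract} is witnessed only in the larger 2-category $\sSbim$. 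So the claim is literally that $(R, B_J,\mu,\delta)$ is a 2-categorical idempotent of $\sbim$, and separately that $R^J$, an object of $\sSbim\supset\sbim$, splits it.

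\textbf{Main obstacle.} There is no serious obstacle — the corollary is a direct bookkeeping consequence of the two preceding lemmas. The only thing requiring a moment's care is confirming that the Frobenius relation, separability ($\mu\cdot\delta=\id_{B_J}$), and the (co)associative (co)algebra axioms for $(B_J,\mu,\delta)$ are already guaranteed: these hold because $(R, B_J,\mu,\delta)$ \emph{is} the 2-categorical idempotent extracted from a manifestly split one via the recipe after \Cref{def:manifestlysplit}, and that recipe automatically produces a valid 2-categorical idempotent (separable algebra) — a fact recorded in that same paragraph. Thus the proof reduces to the single sentence: $B_J\cong B_{w_J}$ lies in $\sbim$ by \Cref{lemma:isomorphism_bwj}, and $R^J$ splits it by \Cref{lem:manifestly}.
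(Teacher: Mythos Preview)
Your proposal is correct and matches the paper's own approach exactly: the paper states the corollary with the phrase ``We immediately obtain the following corollary'' and gives no further argument, relying precisely on \Cref{lem:manifestly} for the splitting data and on \Cref{lemma:isomorphism_bwj} to place $B_J\cong B_{w_J}$ inside $\sbim$. Your careful remark that the idempotent lives in $\sbim$ while the retract data lives in $\sSbim$ is the correct reading of the statement.
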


\begin{theorem} \label{thm:main} The inclusion of Soergel bimodules into singular Soergel bimodules yields a commutative diagram
\[
\begin{tikzcd}
	       \sbim & \sSbim \\
	       \kkSBim & \kksSBim
	       \arrow["\iota", from=1-1, to=1-2]
	       \arrow["\iota_{\sbim}"', from=1-1, to=2-1]
	       \arrow["\iota_{\sSbim}", from=1-2, to=2-2]
	       \arrow["{\simeq}"', from=2-1, to=2-2]
    \end{tikzcd}
\]
    such that the bottom horizontal map is an equivalence of 2-categories. 
\end{theorem}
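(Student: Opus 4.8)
The plan is to construct a 2-functor $\kkSBim \to \kksSBim$ induced by the inclusion $\iota\colon \sbim \hookrightarrow \sSbim$ and show it is an equivalence by exhibiting an explicit (weak) inverse, or by checking essential surjectivity together with full faithfulness on Hom-categories. Since $\iota$ is fully faithful and both 2-categories $\sbim$ and $\sSbim$ are already locally idempotent-complete, the induced 2-functor on 2-categorical idempotent completions is automatically fully faithful on 1- and 2-morphisms (a 2-categorical idempotent in $\sbim$ is in particular one in $\sSbim$, and bimodules between such, together with their morphisms, are computed via the same relative tensor products, which live inside the same ambient categories of $R$-bimodules). The commutativity of the square is then immediate from naturality of the universal 2-functors $\iota_\CCS$. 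So the only substantive point is \emph{essential surjectivity on objects}: every 2-categorical idempotent in $\sSbim$ must be equivalent, in $\kksSBim$, to the image of a 2-categorical idempotent in $\sbim$.

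The key step is to reduce an arbitrary object of $\kksSBim$ to one supported on the single object $R\in\sbim$. An object of $\kksSBim$ is a 2-categorical idempotent $(R^J, e, \mu, \delta)$ sitting over some parabolic ring $R^J$. By \Cref{cor:insbim}, each $R^J$ is a 2-categorical retract in $\sbim$ of the 2-categorical idempotent $(R, B_J,\mu_J,\delta_J)$, via the induction/restriction bimodules ${}_{R^J}R_R$ and ${}_R R_{R^J}\langle l(w_J)\rangle$ of \Cref{lem:manifestly}. Conjugating $(R^J,e,\mu,\delta)$ along this retraction produces a 2-categorical idempotent on $R$ inside $\sbim$: concretely one forms the composite bimodule ${}_R R_{R^J}\circ e\circ {}_{R^J}R_R$ (with its induced separable-algebra structure), which is a $1$-endomorphism of $R$ built from Soergel bimodules — here one uses \Cref{lemma:isomorphism_bwj}, that $B_J\cong B_{w_J}$ already lies in $\sbim$, so $e$ being a singular Soergel bimodule over $R^J$ forces ${}_R R_{R^J}\circ e\circ {}_{R^J}R_R$ to be an honest Soergel bimodule. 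One then checks that splitting this new idempotent in $\kksSBim$ recovers the original object $(R^J,e,\mu,\delta)$, using that $R^J$ is already a retract of $R$ and that $\kksSBim$ is idempotent-complete (\Cref{prop:detectcomplete}), so the composite of the two retractions ($R^J\leftarrow R$ and then $e$) splits as a single 2-condensation. This exhibits every object of $\kksSBim$ as equivalent to the image of an object of $\kkSBim$.

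I expect the main obstacle to be the bookkeeping of the \emph{separable algebra / Frobenius structure} under this conjugation: one must verify that the induced $\mu$ and $\delta$ on ${}_R R_{R^J}\circ e\circ {}_{R^J}R_R$ genuinely satisfy the Frobenius relation and the separability identity $\mu\cdot\delta=\id$ of \Cref{def:2idempotent}, i.e. that composing a 2-condensation with a 2-categorical idempotent again yields a 2-categorical idempotent with a canonical splitting compatible with the one in the completion. This is essentially the statement that 2-condensations compose and that retracts-of-retracts are retracts, one categorical level up; it follows formally from the theory of \cite{GJF19} (idempotent-completeness of $\kkar$, \Cref{prop:detectcomplete}), but making the comparison 2-isomorphisms explicit — so that the square commutes strictly and the equivalence is the expected one — requires carefully tracking the unitors suppressed throughout, together with the identifications $\phi\cdot\gamma=\id$ coming from $\partial_J(\alpha_J)=|W_J|$. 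Once that is in place, fullness and faithfulness on Homs reduce to the corresponding (already known) facts for the 1-categorical idempotent completions of the Hom-categories, since all bimodules and bimodule maps in question are computed inside the same categories of graded $R^I$-$R^J$-bimodules.
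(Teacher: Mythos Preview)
Your proposal is correct and follows essentially the same strategy as the paper: use that $\iota$ is fully faithful and that every object $R^J$ of $\sSbim$ is a 2-categorical retract of $R$ via the idempotent $(R,B_J,\mu,\delta)$ lying in $\sbim$ (\Cref{cor:insbim}, \Cref{lemma:isomorphism_bwj}). The paper simply packages the remaining argument---your conjugation-and-bookkeeping step and the ``retracts of retracts are retracts'' reasoning---into a single citation of \cite[Lemma A.2.4]{D22}, which states precisely that a fully faithful 2-functor between locally idempotent-complete 2-categories whose essential image contains every object up to 2-categorical retract induces an equivalence on $\kkar$; so you are in effect reproving that lemma in situ rather than invoking it.
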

\begin{proof}
The 2-functor $\iota \colon \sbim \to \sSbim$ between locally idempotent-complete 2-categories is fully faithful in the sense that it induces equivalences on Hom-categories. Additionally, every object $R^J$ of $\sSbim$ is a 2-categorical retract of a 2-categorical idempotent on the object $R$ in the image of $\iota$ by Corollary~\ref{cor:insbim}. The result thus follows from  \cite[Lemma A.2.4]{D22}.
\end{proof}

\begin{remark}
From the description of the monoidal bicategories $\msbim$ and $\mssbim$, we expect there to be an equivalence of monoidal bicategories $\kkar \msbim \cong \kkar \mssbim$. As far as the structure of the bicategory is concerned, this is clear. We do not pursue compatibility with the monoidal structure here.
\end{remark}

\subsection{Idempotent-completeness and the Klein--Elias--Hogancamp conjecture}

Theorem~\ref{thm:main} has the following interpretation: 

\begin{corollary}
    \label{cor:main}
    The idempotent completion 2-functor $\sbim \to \kkSBim$ factors through $\sSbim$: singular Soergel bimodules are a \emph{partial} 2-categorical idempotent completion of $\SBim$.
\end{corollary}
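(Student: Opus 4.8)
The plan is to read the statement off directly from Theorem~\ref{thm:main}. The 2-categorical idempotent completion 2-functor of Soergel bimodules is precisely the left vertical arrow $\iota_{\sbim}\colon \sbim \to \kkSBim$ of the commutative square there. I would first invert the bottom equivalence to obtain a 2-functor $\kksSBim \xrightarrow{\;\simeq\;} \kkSBim$, and then form the composite
\[
\sbim \xrightarrow{\;\iota\;} \sSbim \xrightarrow{\;\iota_{\sSbim}\;} \kksSBim \xrightarrow{\;\simeq\;} \kkSBim.
\]
Commutativity of the square says that this composite agrees, up to equivalence of 2-functors, with $\iota_{\sbim}$. That is exactly the assertion that the idempotent completion 2-functor factors through the fully faithful inclusion $\iota$ of Soergel bimodules into singular Soergel bimodules, which is precisely the content of the corollary.

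To round out the statement I would then record why $\sSbim$ deserves to be called a \emph{partial} 2-categorical idempotent completion. The point is that the canonical 2-functor $\iota_{\sSbim}\colon \sSbim \to \kksSBim$ is itself fully faithful: for objects $R^I$ and $R^J$ of $\sSbim$ the Hom-category between their images in $\kksSBim$ is the category of bimodules over the trivial identity algebras, which is canonically the original Hom-category of $\sSbim$, and similarly on 2-morphisms. Hence the composite $\sSbim \hookrightarrow \kksSBim \xrightarrow{\;\simeq\;} \kkSBim$ exhibits $\sSbim$ as a full sub-2-category sitting between $\sbim$ and its 2-idempotent completion $\kkSBim$; by Proposition~\ref{prop:detectcomplete}(2) this inclusion is an equivalence precisely when $\sSbim$ is already 2-idempotent-complete, the open question pursued in Section~\ref{sec:ssbimcompl}.

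There is no genuine obstacle here: all the substance lives in Theorem~\ref{thm:main}, which in turn rests on Corollary~\ref{cor:insbim} (every object $R^J$ is a 2-categorical retract of a 2-categorical idempotent supported on $R$, a consequence of Lemma~\ref{lem:manifestly} and Lemma~\ref{lemma:isomorphism_bwj}) together with \cite[Lemma A.2.4]{D22}. The one thing to be slightly careful about is the meaning of ``factors through'': one wants a diagram of 2-functors commuting up to a specified pseudonatural equivalence rather than on the nose, and this is already the form in which Theorem~\ref{thm:main} is phrased, so nothing further is needed.
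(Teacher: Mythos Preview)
Your proposal is correct and matches the paper's approach: the paper gives no explicit proof at all, presenting the corollary simply as ``the following interpretation'' of Theorem~\ref{thm:main}, which is exactly what you do by reading the factorization off the commutative square. Your additional remarks on full faithfulness of $\iota_{\sSbim}$ and the meaning of ``partial'' are helpful elaborations beyond what the paper spells out, but not a different argument.
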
 We are wondering \emph{how} partial.

\begin{proposition}
\label{prop:equiv}
    The following are equivalent:
    \begin{enumerate}
        \item The 2-category $\sSbim$ is idempotent-complete.
        \item The idempotent completion 2-functor $\sSbim \to \kksSBim$ an equivalence.
        \item The idempotent completion 2-functor $\sSbim \to \kksSBim$ is surjective on equivalence classes of objects.
        \item The composition $\sSbim \to \kksSBim \xrightarrow{\simeq} \kkSBim$ is an equivalence, 
        i.e. singular Soergel bimodules are the \emph{full} 2-categorical idempotent completion of Soergel bimodules.
        \item The composition $\sSbim \to \kksSBim \xrightarrow{\simeq} \kkSBim$ is surjective on equivalence classes of objects.
    \end{enumerate}
\end{proposition}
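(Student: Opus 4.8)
The plan is to prove a cycle of implications establishing the equivalence of all five statements, using \Cref{thm:main} as the bridge between the two idempotent completions. First I would observe that $(1)\Leftrightarrow(2)$ is immediate from \Cref{prop:detectcomplete}(2) applied to $\CCS=\sSbim$: the canonical 2-functor $\iota_{\sSbim}$ is an equivalence precisely when $\sSbim$ is idempotent-complete. For $(2)\Leftrightarrow(4)$ and $(3)\Leftrightarrow(5)$ I would simply compose with the equivalence $\kksSBim\xrightarrow{\simeq}\kkSBim$ from \Cref{thm:main}: since this is an equivalence, the composite $\sSbim\to\kksSBim\to\kkSBim$ is an equivalence (resp. essentially surjective) if and only if $\sSbim\to\kksSBim$ is. This reduces everything to proving $(2)\Leftrightarrow(3)$, i.e. that $\iota_{\sSbim}$ is essentially surjective on objects if and only if it is an equivalence.

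The nontrivial direction is $(3)\Rightarrow(2)$, and it hinges on the fact that $\iota_{\sSbim}$ is always \emph{fully faithful} in the appropriate 2-categorical sense (it induces equivalences on Hom-categories, by the explicit description of $\kkar$ and local idempotent-completeness of $\sSbim$). A 2-functor that is both locally an equivalence (fully faithful) and essentially surjective on objects is an equivalence of 2-categories; this is the 2-categorical analogue of the corresponding statement for ordinary functors, and can be cited from \cite{D22} or proven directly by constructing a pseudo-inverse. So the key point to check carefully is that $\iota_{\sSbim}\colon\sSbim\to\kksSBim$ is fully faithful on Hom-categories. This follows from \Cref{prop:detectcomplete}(2) combined with the explicit construction in \Cref{def:2id}: for objects $c_1,c_2$ coming from $\sSbim$ (i.e. with trivial 2-categorical idempotent structure $(c,\id_c,*,*)$), a $(c_2,c_1)$-bimodule is just a 1-morphism in $\sSbim$ with trivial extra structure, the relative tensor product over $\id$ is the ordinary composition, and every 2-morphism is automatically a bimodule morphism, so the Hom-categories are literally unchanged. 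The converse $(2)\Rightarrow(3)$ is trivial since any equivalence is essentially surjective.

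I expect the main obstacle to be a purely bookkeeping one: carefully matching the notion of ``fully faithful 2-functor'' used implicitly when I invoke \cite[Lemma A.2.4]{D22} in the proof of \Cref{thm:main} with the notion ``locally an equivalence'' needed to upgrade essential surjectivity to an equivalence. There is a subtlety in that a 2-functor can induce equivalences (rather than isomorphisms) on Hom-categories, and assembling these local pseudo-inverses into a global pseudo-inverse requires the usual coherence argument; but this is standard and available in the cited sources, so no genuinely new difficulty arises. The cleanest writeup will present the argument as: $(1)\Leftrightarrow(2)$ by \Cref{prop:detectcomplete}; $(2)\Rightarrow(3)$, $(4)\Rightarrow(5)$ trivially; $(2)\Leftrightarrow(4)$ and $(3)\Leftrightarrow(5)$ by composing with the equivalence of \Cref{thm:main}; and finally $(3)\Rightarrow(2)$ using that $\iota_{\sSbim}$ is fully faithful, hence an essentially surjective fully faithful 2-functor is an equivalence.
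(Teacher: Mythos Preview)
Your proposal is correct and follows essentially the same approach as the paper: both use \Cref{prop:detectcomplete} for $(1)\Leftrightarrow(2)$, the equivalence from \Cref{thm:main} to pass between $(2),(3)$ and $(4),(5)$, and the full faithfulness of the idempotent completion 2-functor to upgrade essential surjectivity to an equivalence. The paper closes the cycle via $(4)\Rightarrow(1)$ directly (being equivalent to an idempotent-complete 2-category implies idempotent-completeness), whereas you route through $(4)\Rightarrow(2)\Rightarrow(1)$, but this is a cosmetic difference.
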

\begin{proof}
The equivalence (1)$\iff$(2) is a consequence of \Cref{prop:detectcomplete}. Since idempotent completion is always fully faithful, it is an equivalence (2,4) as soon as it is essentially surjective, i.e. surjective on equivalence classes of objects (3,5). The implication (2)$\implies$(4)  
follows by composing with the (reverse) equivalence from \Cref{thm:main}. Finally, the implication (4)$\implies$(1) is immediate: $\sSbim$ is idempotent complete if it is equivalent to the completion $\kkSBim$ of $\sbim$. 
\end{proof}

\begin{question}
\label{q:idempotentcompletion}
Is any of the equivalent conditions of \Cref{prop:equiv} satisfied for any non-trivial type $(W,S)$ and a suitable realization?
\end{question}

Even in type $A$ we are unsure if this question has a positive answer. As a sanity check, one can ask whether singular Soergel bimodules can account for some of the \emph{expected} 2-categorical idempotents in Soergel bimodules in type $A$. 
\smallskip

From several perspectives, it is expected that indecomposable Soergel bimodules $B_d$ corresponding to distinguished involutions $d\in \mathcal{D} \subset W$ can be equipped with the structure of a Frobenius algebra object in $\SBim$. This goes back to \cite[Section 5.2]{K14} and \cite[Conjecture 4.41]{EH17}, see also \cite[Section 4.4]{MMMTZ23} where it appears under the name \emph{Klein--Elias--Hogancamp conjecture}.

The set of distinguished elements is defined by
\begin{equation}
    \label{eqn:Duflo} 
    \mathcal{D}=\{w\in W\mid a(w)=\Delta(w)\}
    \end{equation}
    for Lusztig's functions $a,\Delta\colon W \to \N_0$ \cite[\S 14, \S 13, respectively]{L14}\footnote{We always refer to Lusztig's updated version of this reference: \href{https://arxiv.org/abs/math/0208154}{ 	arXiv:math/0208154}} and it is known that $w\in \mathcal{D}$ are necessarily involutions \cite[\S 14 (P6)]{L14}\footnote{The famous properties (P1)-(P15) are stated as conjectures in \cite[\S 14]{L14} and subsequently proven in type A. For Hecke algebras (with equal parameters) for other Coxeter groups they also hold as consequence of \cite{EW14}, see  \cite[Introduction to \S 4]{EH17}}. The values $-a(w)$ and $\Delta(w)$ can be interpreted as the minimal degrees of projections $B_w\otimes B_w \twoheadrightarrow B_w$ onto direct summands \cite[Corollary 4.12 and Conjecture 4.41]{EH17} resp. nontrivial maps $R\to B_w$ \cite[Proposition 4.33]{EH17}. Such maps have a chance to serve as multiplication and unit in a graded Frobenius algebra structure on $B_w$ only if they have complementary degrees, i.e. if $w\in \mathcal{D}$. One can strengthen the Elias--Hogancamp--Klein conjecture by asking for separability:
    
    \begin{question}
    \label{q:separable}
    Does $B_d$ for $d\in \mathcal{D}$ admit the structure of a \emph{separable} Frobenius algebra object in $\SBim$, i.e. an object of $\kkSBim$?    
    \end{question}

    If the answer to \Cref{q:separable} is positive for some $d\in \mathcal{D}$, one can further ask:
    
    \begin{question}
    \label{q:sepsplits}
       Is the $2$-categorical idempotent represented by a given separable Frobenius algebra structure on $B_d$ manifestly split in $\sSbim$?
    \end{question}

    This is again too much to hope for, but also not necessary for a positive answer to \Cref{q:idempotentcompletion}. 

   \begin{question}
    \label{q:sepsequiv}
       Is the $2$-categorical idempotent represented by a given separable Frobenius algebra structure on $B_d$ equivalent in $\kksSBim$ to an object of $\sSbim$?
    \end{question}

We now specialize the discussion to type $A$, i.e. symmetric groups $S_n$ with their permutation representations in characteristic zero. In this case, all involutions are distinguished since every left cell contains a unique involution, which must be distinguished by \cite[\S 14 (P13)]{L14}.

\smallskip 

For $n \leq 3$ all involutions in $S_n$ are longest elements of parabolic subgroups. The corresponding indecomposable Soergel bimodules indeed carry separable Frobenius algebra structures and we have seen in \Cref{sec:ssbimcompl} that the corresponding 2-categorical idempotents split in $\sSbim_n$. Thus \Cref{q:separable} and \Cref{q:sepsplits} have positive answers in this case. However, we do not currently know if these are all (relevant) examples:

\begin{question}
    Do all 2-categorical idempotents of $\sbim_n$ for $n\leq 3$ split in $\sSbim_n$?
\end{question}

For $n\geq 4$ the symmetric groups contain involutions that are not longest elements of parabolics.  We consider $S_4$ as generated by the simple transpositions $s = (1,2)$, $t = (2,3)$, and $u = (3,4)$. Then $S_4$ contains the involutions $1$, $s$, $t$, $u$, $sts$, $tut$, $su$ and $stusts$, which are longest elements of parabolics, but also $tsut$ (in the same 2-sided Kazhdan–Lusztig cell as $su$) or $ustsu$ (in the same 2-sided Kazhdan–Lusztig cell as $sts$), which are not longest elements of parabolics. In the following we show that \Cref{q:separable} still has a positive answer for $S_4$

\begin{example}
\label{ex:tsut}
   In $\SBim_4$ we construct a 2-categorical idempotent on the indecomposable Soergel bimodule $B_{tsut}$. The Kazhdan-Lusztig basis elements satisfy 
        $C_{tsut} = C_{t}C_{s}C_{u}C_{t}$.
    Permutations satisfying this property are called \emph{Deodhar elements} in \cite{BW01}. As a consequence of Theorem~\ref{thm:soergel_cat}, the Bott-Samuelson bimodule 
\[ B_{tsut} \cong  B_{t} \otimes B_{s} \otimes B_{u} \otimes B_{t} \cong  B_{t} \otimes B_{su} \otimes B_{t} \]
    is indecomposable and isomorphic to the Soergel bimodule $B_{tsut}$. We show that it inherits its separable Frobenius algebra structure from that of $B_{su}$, which exists since $su$ is a longest element of a parabolic.
    
    The multiplication map $\mu_{tsut}$ and its section $\delta_{tsut}$ for $B_{tsut}$ are best described diagrammatically. For this we use the Elias-Khovanov-Williamson diagrammatic calculus for morphisms between Bott--Samelson bimodules with colour coding
     $\{s, t, u\} = \{ \begin{tikzpicture}[anchorbase,smallnodes,scale=.3] 
       \fill[rd] (0.5,0.5) circle (2.5mm);
   \end{tikzpicture}$, $\begin{tikzpicture}[anchorbase,smallnodes,scale=.3] 
       \fill[bl] (0.5,0.5) circle (2.5mm);
   \end{tikzpicture}$, $\begin{tikzpicture}[anchorbase,smallnodes,scale=.3] 
       \fill[gr] (0.5,0.5) circle (2.5mm);
   \end{tikzpicture} \}$:

\begin{equation*}
    \label{diag:mutsut}
    \mu_{tsut} = 
    \begin{tikzpicture}[anchorbase,smallnodes,scale=.3] 
        \draw[bl, line cap = round]   (0,0) \pu (2,3)
                        (7,0) \pu (5,3)
                        (3,0) \ur (3.5,0.5) \rd (4,0);
        \draw[rd, line cap = round]   (1,0) \uur (3,2)
                        (5,0) \uul (3,2)
                        (3,2) \pu (3,3); 
        \draw[gr, line cap = round]   (2,0) \uur (4,2)
                        (6,0) \uul (4,2)
                        (4,2) \pu (4,3);    
    \end{tikzpicture}
    \qquad \qquad
    \delta_{tsut} = 
    \begin{tikzpicture}[anchorbase,smallnodes,scale=.3] 
        \draw[bl, line cap = round]  (2,-3) \pu (0,0) 
                        (5,-3) \pu (7,0)
                        (4,0) \dl (3.5,-0.5) \lu (3,0);
        \fill[gy, line cap = round] (3.5,0) circle (2mm); 
        \draw[rd, line cap = round]   (3,-2) \uru (5,0)
                        (3,-2) \ulu (1,0)
                        (3,-3) \pu (3,-2); 
        \draw[gr, line cap = round]   (4,-2) \uru (6,0) 
                        (4,-2) \ulu (2,0) 
                        (4,-3) \pu (4,-2);   
    \end{tikzpicture}
    \quad \text{with} \quad
    \begin{tikzpicture}[anchorbase,smallnodes,scale=.3] 
        \fill[gy] (3.5,0) circle (3mm); 
    \end{tikzpicture}
     = \frac{1}{4} 
    \begin{tikzpicture}[anchorbase,smallnodes,scale=.2] 
            \draw[bl] (0,-1.5) \lu (-1.5,0) \ur (0,1.5) \rd (1.5,0) \dl (0,-1.5); 
            
            \fill[rd] (-0.5,0.5) circle (2.5mm); 
            \fill[rd] (-0.5,-0.5) circle (2.5mm); 
            \draw[rd] (-0.5,0.5) \pu (-0.5,-0.5);
            
            \fill[gr] (0.5,0.5) circle (2.5mm); 
            \fill[gr] (0.5,-0.5) circle (2.5mm); 
            \draw[gr] (0.5,0.5) \pu (0.5,-0.5);
            
    \end{tikzpicture}
\end{equation*}

Note that $\delta_{tsut}$ is just the reflection of $\mu_{tsut}$, with a  polynomial correction given by
    \[ \begin{tikzpicture}[anchorbase,smallnodes,scale=.3] 
        \fill[gy] (3.5,0) circle (3mm); 
    \end{tikzpicture}
     = \frac{1}{4}\left( (x_1 - x_2)(x_3 -x_4) - (x_1 - x_3)(x_2 - x_4) \right) \]
     
     For comparison, the multiplication $\mu_{su}$ and section $\delta_{su}$ for $B_{su}$ are given by:
\begin{equation*}
    \label{diag:musu}
    \mu_{su} = 
    \begin{tikzpicture}[anchorbase,smallnodes,scale=.3] 
        \draw[rd, line cap = round]   (1,0) \uur (3,2)
                        (5,0) \uul (3,2)
                        (3,2) \pu (3,3); 
        \draw[gr, line cap = round]   (2,0) \uur (4,2)
                        (6,0) \uul (4,2)
                        (4,2) \pu (4,3);    
    \end{tikzpicture}
    \qquad \qquad
    \delta_{su} = 
    \begin{tikzpicture}[anchorbase,smallnodes,scale=.3] 
        \fill[gy, line cap = round] (3.5,0) circle (2mm); 
        \draw[rd, line cap = round]   (3,-2) \uru (5,0)
                        (3,-2) \ulu (1,0)
                        (3,-3) \pu (3,-2); 
        \draw[gr, line cap = round]   (4,-2) \uru (6,0) 
                        (4,-2) \ulu (2,0) 
                        (4,-3) \pu (4,-2);   
    \end{tikzpicture}
    \quad \text{with} \quad
    \begin{tikzpicture}[anchorbase,smallnodes,scale=.3] 
        \fill[gy] (3.5,0) circle (3mm); 
    \end{tikzpicture}
     = \frac{1}{4} 
    \begin{tikzpicture}[anchorbase,smallnodes,scale=.2] 
            
            \fill[rd] (-0.5,0.5) circle (2.5mm); 
            \fill[rd] (-0.5,-0.5) circle (2.5mm); 
            \draw[rd] (-0.5,0.5) \pu (-0.5,-0.5);
            
            \fill[gr] (0.5,0.5) circle (2.5mm); 
            \fill[gr] (0.5,-0.5) circle (2.5mm); 
            \draw[gr] (0.5,0.5) \pu (0.5,-0.5);
            
    \end{tikzpicture}
\end{equation*}
\end{example}

This example, without the polynomial correction, had been found by Klein, \cite[Example 5.2.10]{K14}, although no explicit maps were given. This example is also explored in \cite[Example 5E.17]{ERT25}, although in a slightly different context. 

\begin{example}
\label{ex:ustsu}
Even though $ustsu$ is not a Deodhar element, one can compute that $C_uC_{sts}C_u = C_{ustsu}$, so in terms of Soergel bimodules $B_{u} \otimes B_{sts} \otimes B_{u} \cong B_{ustsu}$. The projection map $B_{s} \otimes B_{t} \otimes B_{s} \to B_{sts}$ is well-known and the maps $\mu_{ustsu}$ and $\delta_{ustsu}$ can be constructed similarly as in Example \ref{ex:tsut}. In diagrammatic terms,
\begin{equation*}
    \mu_{ustsu} = 
    \begin{tikzpicture}[anchorbase,smallnodes,scale=.25] 
        \draw[rd, line cap = round]   (0,0) \pu (2,4)
                        (7,0) \pu (5,4)
                        (2,0) \ur (3.5,1) \rd (5,0)
                        (3.5,1) \pu (3.5,2)
                        (3.5,2) \ulu (1.75,3)
                        (3.5,2) \uru (5.25,3)
                        ;

        \draw[bl, line cap = round]   (1,0) \uur (3.5,2)
                    (6,0) \uul (3.5,2)
                    (3.5,2)\pu (3.5,4) 
                    ;

        \draw[gr, line cap = round]   (-1,0) \pu (1,4)
                    (8,0) \pu (6,4) 
                    (3,0) \ur (3.5,0.5) \rd (4,0)
                    ;
    \end{tikzpicture}
    \qquad \qquad
    \delta_{ustsu} = 
        \begin{tikzpicture}[anchorbase,smallnodes,scale=.25]
        \draw[rd, line cap = round]       (2,-4) \pu (0,0)
                        (5,-4) \pu (7,0)
                        (5,0) \dl (3.5,-1) \lu (2,0)
                        (3.5,-1) \pu (3.5,-2) 
                        (1.75,-3) \uur (3.5,-2)
                        (5.25,-3) \uul (3.5,-2) 
                        ;
                        
        \draw[bl, line cap = round]   (3.5,-2) \ulu (1,0) 
                    (3.5,-2) \uru (6,0)
                    (3.5,-4) \pu (3.5,-2) 
                    ;

        \draw[gr, line cap = round]   (1,-4) \pu (-1,0)  
                    (6,-4) \pu (8,0)  
                    (4,0) \dl (3.5,-0.5) \lu (3,0)
                    ;
        \fill[pu, line cap = round] (3.5,0) circle (2mm); 
                        
    \end{tikzpicture}
    \quad \text{with} \quad \begin{tikzpicture}[anchorbase,smallnodes,scale=.3]
    \fill[pu] (3.5,0) circle (3mm);
    \end{tikzpicture} = \frac{1}{8}     \begin{tikzpicture}[anchorbase,smallnodes,scale=.15] 
    \fill[rd] (0,0) circle (3mm); 
    \fill[rd] (2,4) circle (3mm);
    \fill[rd] (0,4) circle (3mm);
    \fill[rd] (2,0) circle (3mm);
    \fill[bl] (1,4) circle (3mm);
    \fill[bl] (1,0) circle (3mm);

    \draw[rd]   (0,0) \uur (1,1)
                (2,0) \uul (1,1)
                (1,1) \pu (1,3)
                (1,3) \ulu (0,4)
                (1,3) \uru (2,4);
    \draw[bl]   (1,0) \pu (1,1)
                (1,1) \lu (0,2) \ur (1,3) \rd (2,2) \dl (1,1)
                (1,3) \pu (1,4);
    \draw[gr]   (1,-1) \lu (-2,2) \ur (1,5) \rd (4,2) \dl (1,-1);            
    \end{tikzpicture}.
\end{equation*}
This example allows one to find a Frobenius structure on $ustsu$, which cannot by previously developed methods, see \cite[Example 4.16]{MMMTZ23}. Notice that removing the polynomial correction from the map $\delta_{ustsu}$ also endows $B_{ustsu}$ with the structure of a Frobenius algebra object but \emph{not} with the structure of a 2-categorical idempotent, since without the correction, the composition of the comultiplication and multiplication maps is equal to zero. 
\end{example}

With $B_{tsut}$ and $B_{ustsu}$ we thus have 2-categorical idempotents in $\SBim_4$, and hence corresponding objects in $\kkSBim_4$, so \Cref{q:separable} has positive answers in these cases. Even though these $2$-categorical idempotents do not come directly from singular Soergel bimodules, \Cref{q:sepsequiv} has a positive answer unlike \Cref{q:sepsplits}: 

\begin{proposition}
\label{prop:morita}
In $\kkSBim_4$ we have equivalences of objects 
\[ B_{tsut}\simeq B_{su}\qquad \text{and} \qquad B_{ustsu}\simeq B_{sts}.\]
As a consequence, 2-categorical idempotents $B_{tsut}$ and $B_{ustsu}$ are in the essential image of $\sSbim_4\to \kkSBim_4$. 
\end{proposition}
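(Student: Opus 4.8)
The plan is to use the description of the $2$-categorical idempotent completion in \Cref{def:2id} to reduce each of the two claimed equivalences to a \emph{Morita equivalence} of separable Frobenius algebra objects in $\SBim_4 = \End_{\sbim_4}(R)$. Indeed, a $1$-morphism of $\kkSBim_4 = \kkar\sbim_4$ between two $2$-categorical idempotents on the object $R$ is a bimodule in the sense of \Cref{def:bimod}, with composition given by the relative tensor product of \Cref{cons:rel_tensor_prod}; so $B_{tsut}$ and $B_{su}$ are equivalent as objects of $\kkSBim_4$ precisely when there are a $(B_{tsut},B_{su})$-bimodule $M$ and a $(B_{su},B_{tsut})$-bimodule $N$ together with bimodule isomorphisms $M\circ_{B_{su}}N\cong B_{tsut}$ and $N\circ_{B_{tsut}}M\cong B_{su}$ (this is a straightforward unwinding of \Cref{def:2id} and \Cref{cons:rel_tensor_prod}; see also \cite{D22}). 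By \Cref{lemma:isomorphism_bwj} and \Cref{cor:insbim}, $B_{su}=B_{w_{\{s,u\}}}$ is equivalent in $\kkSBim_4\simeq\kksSBim_4$ to the object $R^{\{s,u\}}\in\sSbim_4$, and likewise $B_{sts}=B_{w_{\{s,t\}}}\simeq R^{\{s,t\}}$; hence the statements about the essential image follow once the equivalences $B_{tsut}\simeq B_{su}$ and $B_{ustsu}\simeq B_{sts}$ are established.

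For $B_{tsut}\simeq B_{su}$, I would take $M := B_t\otimes_R B_{su}$, equipped with the right $B_{su}$-action induced by $\mu_{su}$ and the left $B_{tsut}$-action induced, through the decomposition $B_{tsut}\cong B_t\otimes_R B_{su}\otimes_R B_t$ of \Cref{ex:tsut}, by the Frobenius pairing $B_t\otimes_R B_t\to R$ (concretely $f\otimes g\mapsto\partial_t(fg)$) on the two inner tensor factors, followed by $\mu_{su}$ on the two $B_{su}$-factors; the coaction $2$-morphisms required by \Cref{def:bimod} are built from $\delta_t$, $\delta_{su}$ and the copairing $R\to B_t\otimes_R B_t$. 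One takes $N := B_{su}\otimes_R B_t$ with the mirror-image structure (equivalently, $N$ is a two-sided adjoint of $M$, which exists since $B_t$ and $B_{su}$ are self-biadjoint up to grading shift). The first step is to verify that $M$ and $N$ are bimodules in the sense of \Cref{def:bimod}: a diagram chase using (co)associativity and separability ($\mu_t\delta_t=\id$ and $\mu_{su}\delta_{su}=\id$, both instances of \Cref{lem:manifestly}), the Frobenius relations, and the fact that $s$ and $u$ commute.

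The easy half is $M\circ_{B_{su}}N\cong B_{tsut}$. By \Cref{cons:rel_tensor_prod} this relative tensor product is the image of an idempotent on $M\otimes_R N = B_t\otimes_R B_{su}\otimes_R B_{su}\otimes_R B_t$, which the bimodule and Frobenius relations for $B_{su}$ reduce to $\id_{B_t}\otimes(\delta_{su}\circ\mu_{su})\otimes\id_{B_t}$; separability of $B_{su}$ then splits it with image $B_t\otimes_R B_{su}\otimes_R B_t\cong B_{tsut}$, and one checks that the resulting isomorphism respects the $B_{tsut}$-bimodule structures. The crux is $N\circ_{B_{tsut}}M\cong B_{su}$: here one must make the idempotent of \Cref{cons:rel_tensor_prod} on $N\otimes_R M = B_{su}\otimes_R B_t\otimes_R B_t\otimes_R B_{su}$ explicit out of $\delta_{tsut}$, $\mu_{tsut}$ and the structure maps of \Cref{ex:tsut}. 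Using $\mu_t\delta_t=\id$ together with the snake identities for the self-duality of $B_t$ to contract the two $B_t$-factors, this idempotent should collapse to one of the form $\delta_{su}\circ\mu_{su}$ on $B_{su}\otimes_R B_{su}$, whose image is $B_{su}$. The polynomial correction in $\delta_{tsut}$ is indispensable here: as observed after \Cref{ex:ustsu}, it is precisely what turns $(B_{tsut},\mu_{tsut},\delta_{tsut})$ into a $2$-categorical idempotent rather than a non-separable Frobenius algebra, i.e. what enforces $\mu_{tsut}\delta_{tsut}=\id$; without it the relative tensor product would degenerate to zero. Granting this, $M$ is an invertible bimodule and $B_{tsut}\simeq B_{su}$ in $\kkSBim_4$.

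The equivalence $B_{ustsu}\simeq B_{sts}$ follows from the same recipe with $B_t$ replaced by $B_u$ and $B_{su}$ by $B_{sts}=B_{w_{\{s,t\}}}$, using $B_{ustsu}\cong B_u\otimes_R B_{sts}\otimes_R B_u$ from \Cref{ex:ustsu} and the separable Frobenius structures on $B_u=B_{w_{\{u\}}}$ and $B_{sts}$ provided by \Cref{lem:manifestly}; the Morita bimodule is $B_u\otimes_R B_{sts}$, giving $B_{ustsu}\simeq B_{sts}\simeq R^{\{s,t\}}$. The main obstacle, in both cases, is the identification of the relative tensor product over the ``non-parabolic'' idempotent ($B_{tsut}$ or $B_{ustsu}$): one must unwind \Cref{cons:rel_tensor_prod} explicitly and check that the polynomial corrections conspire with the separability identities to absorb the spurious $B_t$- (resp.\ $B_u$-) factors. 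The bimodule axioms for $M,N$ and the easy half $M\circ_{B_{su}}N\cong B_{tsut}$ are comparatively routine.
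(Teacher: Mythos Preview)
Your approach is essentially the paper's: the authors state that $B_{tsu}$ and $B_{sut}$ serve as mutually inverse bimodules between the $2$-categorical idempotents $B_{tsut}$ and $B_{su}$, and your $M=B_t\otimes_R B_{su}$ and $N=B_{su}\otimes_R B_t$ are exactly these (one checks $C_tC_sC_u=C_{tsu}$ with no lower terms, so $B_t\otimes_R B_{su}\cong B_{tsu}$). The paper's proof is a one-line ``easy computation'' claim, whereas you spell out which half of the Morita equivalence is routine, where the polynomial correction in $\delta_{tsut}$ enters, and how the bimodule (co)actions are built from the sandwiched structure; your hedged ``should collapse'' for $N\circ_{B_{tsut}}M\cong B_{su}$ is at the same level of rigor as the paper's assertion. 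The second case is handled identically in both.
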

\begin{proof}
    An easy computation shows that $B_{tsu}$ and $B_{sut}$ admit the structure of bimodules between the 2-categorical idempotents $B_{tsut}$ and $B_{su}$ which are, furthermore, mutually inverse up to 2-isomorphism. The case of $B_{ustsu}$ is analogous.
\end{proof}

A key aspect of $n=4$ is that the multiplication and section for the new examples of 2-categorical idempotents are simply obtained from old ones, namely those corresponding to longest elements of parabolics, by \emph{sandwiching}: wrapping the diagrammatic maps for the longest element with appropriately coloured lines. Following the proof of \Cref{prop:morita}, this can be interpreted as saying that the 2-categorical idempotents $B_{tsut}$ and $B_{ustsu}$ were \emph{defined} to be (Morita) equivalent to their longest-element-of-parabolic cousins. 

Essential for this strategy is the existence of a sequence of simple transpositions such that, while staying in the same two-sided cell (in the sense of \cite{KL79}), one can conjugate in a length-decreasing way to the longest element of a parabolic. In general, the following is true.

\begin{proposition}
\label{prop:sandwich}
    Let $d\in \mathcal{D}\subset W$ be a distinguished involution. Suppose $w_J$ is the longest element of a parabolic, such that there exists a sequence $(s_1, \ldots s_k)$ of simple reflections with:
    \begin{enumerate}
        \item $d= s_k \cdots s_1w_J s_1 \cdots s_k$ and $\ell(d)=\ell(w_J)+2k$.
        \item For all $1 \leq i \leq k$, the element $s_i \cdots s_{1} w_J s_{1} \cdots s_i$ is in the same two-sided cell as $w_J$.
        \item $B_{s_k}\cdots B_{s_1}B_{w_J} B_{s_1} \cdots B_{s_k}$ is indecomposable. 
    \end{enumerate}
    
    Then $B_d$ inherits the structure of a Frobenius algebra in $\SBim_n$ from $B_{w_J}$.
\end{proposition}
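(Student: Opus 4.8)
The plan is to mimic the strategy from Examples~\ref{ex:tsut} and~\ref{ex:ustsu}, showing that the Frobenius structure on $B_{w_J}$ can be transported along the conjugation $w_J \mapsto s_1 w_J s_1 \mapsto \cdots \mapsto d$ by ``sandwiching'' with the bimodules $B_{s_i}$, and that conditions (1)--(3) are exactly what is needed to carry out this transport cleanly. First I would set up notation: write $u_i := s_i \cdots s_1 w_J s_1 \cdots s_i$ for $0 \le i \le k$, so that $u_0 = w_J$ and $u_k = d$, and let $M_i := B_{s_i} \otimes B_{u_{i-1}} \otimes B_{s_i}$, so by (3) (and since direct summands of indecomposables are indecomposable) each $M_i \cong B_{u_i}$ as a Soergel bimodule. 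Condition (1) guarantees $\ell(u_i) = \ell(u_{i-1}) + 2$, so the indecomposable $B_{u_i}$ appears in $M_i$ in the correct ``top'' degree; together with (3) this forces $M_i \cong B_{u_i}$ (no shifts, no lower terms).

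The core of the argument is a single inductive step: given a separable Frobenius algebra structure $(\mu, \delta, \eta, \varepsilon)$ on $B_{u_{i-1}}$, produce one on $B_{u_i} \cong B_{s_i} \otimes B_{u_{i-1}} \otimes B_{s_i}$. Here I would use that $B_{s_i}$ is self-biadjoint (it is a Frobenius object itself, being the longest element of the rank-one parabolic $\{s_i\}$), with cup/cap 2-morphisms and the relation that the ``circle'' evaluates to an honest element of $R$. Multiplication on $B_{u_i}$ is defined by $\mu_{u_i} := (\id \otimes \mu \otimes \id) \circ (\id \otimes \id \otimes (\text{cap}_{s_i}) \otimes \id \otimes \id)$ --- diagrammatically, merge the two outer $s_i$-strands with a cap, use $\mu$ on the middle, as in the displayed formula for $\mu_{tsut}$; comultiplication $\delta_{u_i}$ is the vertical mirror, possibly twisted by a central polynomial correction $p_i \in R$ (the gray/purple dot in the examples) chosen precisely so that $\mu_{u_i} \cdot \delta_{u_i} = \id$. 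The existence of a valid $p_i$ is the crux: after sandwiching, $\mu_{u_i} \cdot \delta_{u_i}$ is a scalar (in fact a polynomial) times the identity 2-morphism of $B_{u_i}$ --- because $\End(B_{u_i})$ in the relevant degree is one-dimensional over $R$ --- and one must check this polynomial is invertible in the appropriate localization, or better, that one can choose $p_i$ making it exactly $1$; this is guaranteed because the naive sandwich (with $p_i = 1$) produces a nonzero map (one uses indecomposability of $M_i$ and that $B_{u_i}$ is \emph{not} a summand of any shorter Bott--Samelson, hence the composite cannot be zero, cf.\ the argument for $\mu_{tsut}\cdot\delta_{tsut}$). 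I would verify associativity, coassociativity, the Frobenius relation, and the bimodule-map property for $\delta_{u_i}$ by isotopy of the sandwiched diagrams, reducing each identity to the corresponding identity for $(\mu,\delta)$ on $B_{u_{i-1}}$ plus the zig-zag relations for the $B_{s_i}$ cups and caps; the central polynomial corrections are inserted symmetrically so they do not disturb these relations.

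Where does condition (2) enter? It is needed to ensure that at each stage $B_{u_i}$ is still a \emph{separable} Frobenius algebra (an object of $\kkSBim$), not merely a Frobenius algebra: staying within one two-sided cell controls the lowest degree in which a nonzero map $R \to B_{u_i}$ exists (Lusztig's $\Delta$) and the minimal degree of a projection $B_{u_i} \otimes B_{u_i} \to B_{u_i}$ (Lusztig's $a$), and on a single cell these are related so that the unit $\eta_{u_i}$ and multiplication $\mu_{u_i}$ land in complementary degrees, which is exactly the numerical obstruction to separability discussed around~\eqref{eqn:Duflo}. Concretely, within the induction, condition (2) lets us invoke that $u_i \in \mathcal{D}$ (all cell elements here being distinguished in type $A$), so the degree bookkeeping closes up; alternatively one observes directly that the sandwiched $\delta_{u_i}$ has the right degree because each $B_{s_i}$-cup contributes the grading shift $\langle 1 \rangle$ that matches $\ell(u_i) - \ell(u_{i-1}) = 2$.

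\textbf{Main obstacle.} The genuinely delicate point is the \emph{existence and invertibility of the polynomial correction} $p_i$ at each step --- i.e.\ that the sandwiched multiplication and comultiplication can be matched to satisfy $\mu_{u_i} \cdot \delta_{u_i} = \id$ on the nose. In the two worked examples this was a hands-on computation ($p = \frac{1}{4}((x_1-x_2)(x_3-x_4) - (x_1-x_3)(x_2-x_4))$ and the degree-$4$ analogue), and the general claim is that such a correction always exists under hypotheses (1)--(3). I would argue this abstractly: the composite $\mu_{u_i} \cdot \delta_{u_i}^{(p_i=1)}$ lies in $\End(B_{u_i})$ in degree $0$, which by Soergel's Hom-formula and the self-duality of $B_{u_i}$ is a free $R^W$-module (or a local ring) of rank one generated by $\id$; the composite is a nonzero element of it by indecomposability plus minimality of the reduced expression (it is a ``bubble'' that cannot collapse to zero), hence a unit after inverting it, and absorbing its inverse into $p_i$ (which acts centrally, as multiplication by a polynomial on the middle strand, exactly as the gray dot does) finishes the step. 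The subtlety is making ``nonzero'' rigorous in general rather than in examples --- this is where one must use the precise statement of Soergel's categorification theorem (Theorem~\ref{thm:soergel_cat}), specifically that $B_{u_i}$ occurs in $B_{s_i} B_{u_{i-1}} B_{s_i}$ with multiplicity one in top degree, to pin down the sandwiched composite up to an invertible scalar/polynomial. Everything else (associativity, Frobenius, bimodule-map, separability degree count) is a diagrammatic isotopy argument that I would treat as routine given the examples.
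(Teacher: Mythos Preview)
Your sandwiching strategy matches the paper's, and the portion of your argument establishing associativity, coassociativity, and the Frobenius relation by isotopy of the wrapped diagrams is exactly what is needed. However, you are attempting to prove strictly more than the proposition asserts: the statement only claims a \emph{Frobenius algebra} structure on $B_d$, not a \emph{separable} one (i.e., not a 2-categorical idempotent). Accordingly, the paper's proof simply transports the Frobenius structure maps of $B_{w_J}$ by wrapping them in a rainbow of $s_1,\dots,s_k$-coloured strands and \emph{explicitly ignores the polynomial corrections}; conditions (1) and (3) serve to identify the resulting bimodule with $B_d$, and the Frobenius axioms are then pure diagram isotopy.

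Your entire ``Main obstacle'' section---producing a correction $p_i$ with $\mu_{u_i}\cdot\delta_{u_i}=\id$---is therefore not part of this proposition; the paper poses precisely this as an open question immediately after the proof. Worse, your proposed argument for it has a genuine gap: you assert that the naive composite $\mu_{u_i}\cdot\delta_{u_i}^{(p_i=1)}$ is a nonzero multiple of the identity (``a bubble that cannot collapse to zero''), but Example~\ref{ex:ustsu} states verbatim that without the polynomial correction the composition of multiplication and comultiplication is equal to zero. So the mechanism ``absorb the inverse of the nonzero scalar into $p_i$'' fails at the first step, and no general argument of this kind is known. Your reading of condition (2) as the ingredient guaranteeing separability is likewise not what the proposition establishes; separability is simply not proved under these hypotheses.
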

\begin{proof}
    Under these very strict assumptions controlling the behaviour of the bimodules, the statement follows from the fact that $B_{w_J}$ admits a Frobenius algebra structure realized diagrammatically with the thick calculus from \cite{E16}. The structural morphismus for a Frobenius algebra structure on $B_d\cong B_{s_k}\cdots B_{s_1}B_{w_J} B_{s_1} \cdots B_{s_k}$ are then obtained diagrammatically by sandwiching those of $B_{w_J}$ by a \emph{rainbow of lines} labeled $s_1,\dots, s_k$, as in \Cref{ex:tsut} and \Cref{ex:ustsu} (ignoring the polynomial corrections). 
\end{proof}

The comultiplication of the Frobenius algebra structure on $B_d$ constructed in the proof of \Cref{prop:sandwich} is typically not a section to the multiplication. However, for longest elements of parabolics and for the $S_4$ examples above, there exists a polynomial correction to the comultiplication, given by a scaling of the composite of unit and counit, that yields a section to the multiplication. Whenever this holds, $B_d$ yields a 2-categorical idempotent that is (Morita) equivalent to $B_{w_J}$ in $\kkSBim$ and hence in the essential image of $\sSbim\to \kkSBim$. 

\begin{question} Suppose that $d\in \mathcal{D}\subset W$ satisfies the requirements of \Cref{prop:sandwich} with associated longest element of a parabolic $w_J$, so that $B_d$ can be equipped with the induced Frobenius algebra structure. Consider the polynomial correction to the comultiplication on $B_d$, with polynomial obtained by scaling the rainbow-circled composite of counit and unit of $B_{w_J}$. Is this corrected comultiplication a section to the multiplication?  
\end{question}
It would also be interesting if this proposed correction is a \emph{generalized barbell} from \cite[Definition 4.34]{EH17}.
\smallskip

Finally we comment on the severe limitations of \Cref{prop:sandwich}.

\begin{example}
    Condition (3) in \Cref{prop:sandwich} is rarely satisfied, typically the Bott--Samelson bimodule is decomposable with $B_d$ as one of its summands. Finding the correct projection onto this summand is a well-known challenge in the theory of Soergel bimodules. Already in $S_5$ one can find an example of this failure. Take $d = tustsvut$, the longest element $stsv$ and the sequence $(u,t)$. One can verify that this data satisfies conditions (1) and (2), but (3) fails:
    \[B_tB_uB_{stsv}B_uB_t \cong B_d \oplus B_{tuvsts} \oplus B_{vstust} \oplus B_{stsv}\]
\end{example}

\begin{example}
\label{exa:probone}
Involutions in symmetric groups are parametrized via the Robinson–Schensted correspondence by (pairs of identical) Young tableaux. The corresponding 2-sided cell can be read off by the shape of the tableaux. Two things are true:
\begin{itemize}
  \item The length-minimizing involutions in the 2-sided cells are exactly the longest elements of parabolics \cite[Theorem 1.1]{H05}.
    \item One can move between any two involutions of the same cycle type by iterated conjugation by simple transpositions.
\end{itemize}

\end{example}
Given any involution $d\in S_n$, one may thus hope to find a suitable $w_J$ in the same 2-sided cell (and thus equal cycle type) and then a sequence $(s_1, \ldots s_k)$ of simple transpositions, such that (1) and (2) in \Cref{prop:sandwich} are satisfied. Unfortunately, this is not the case:

\begin{example}
\label{exa:probtwo}
    For the involution \[d = s_5s_4s_3s_1s_8s_7s_6s_5s_4s_3s_2s_8s_7s_6s_5s_4s_3s_7s_6s_5s_4s_7s_6s_5s_8s_7s_8 \in S_9\]
     all length-decreasing conjugations by simple transpositions leave the 2-sided cell of $d$, so Condition (2) in \ref{prop:sandwich} is not satisfied. 
\end{example}

It would be interesting, whether the rainbow approach of \Cref{prop:sandwich} can be extended and made effective in a larger number of examples by using either the knowledge of concrete idempotents or the thick calculus for Soergel bimodules to overcome the limitations expressed in \Cref{exa:probone} and \Cref{exa:probtwo}.

\section{Higher idempotents and deformations of foam categories}

Our next goal is to establish the relation between the monoidal 2-categories of singular Bott--Samelson bimodules and foams.
\subsection{Foams}
\label{sec:foams}

We recall the (secretely monoidal) 2-category of foams defined in \cite{QR16} as setting for a combinatorial formulation of $\glN$ link homology, motivated by categorical skew Howe duality. For simplicity and later applications we work over the complex numbers $\C$.

\begin{definition}
\label{def:foams}
    Consider the locally $\C$-linear 2-category of progressive foams, $\Foam_+$ assembled from the following data:
    \begin{itemize}
        \item objects are finite (possibly empty) sequences $\mathbf{a} = (a_1, \ldots , a_k)$ with $a_i \in \N$;
        \item 1-morphisms are given by webs, generated under horizontal and vertical stacking, by identity webs and by merge and split trivalent vertices of the form 
        \[ \begin{tikzpicture}[line cap = round, scale=.5]
    	\draw [very thick,mid arrow] (2.25,0) to (.75,0);
    	\draw [very thick,mid arrow] (.75,0) to [out=135,in=0] (-1,.75);
    	\draw [very thick,mid arrow] (.75,0) to [out=225,in=0] (-1,-.75);
    	\node at (3.25,0) {\tiny $a+b$};
        \node at (-1.5,.75) {\tiny $a$};
        \node at (-1.5,-.75) {\tiny $b$};
    \end{tikzpicture} \qquad
    \begin{tikzpicture}[line cap = round, scale=.5]
	   \draw [very thick,mid arrow] (-.75,0) to (-2.25,0) ;
	   \draw [very thick,mid arrow] (1,.75) to [out=180,in=45] (-.75,0);
	   \draw [very thick,mid arrow] (1,.-.75) to [out=180,in=315] (-.75,0);
	   \node at (-3.25,0) {\tiny $a+b$};
	   \node at (1.5,.75) {\tiny $a$};
	   \node at (1.5,-.75) {\tiny $b$};
    \end{tikzpicture}
 \]
 which are considered as mapping from the sequence of labels on the right boundary to the sequence of labels on the left boundary.
 
    These webs are \emph{progressive} in the sense that they have no vertical tangencies and are always oriented from right to left. We omit the orientation in upcoming diagrams. 
        \item 2-morphisms are given by progressive \emph{foams}, generated under stacking operations in three different directions and linear combinations by the following basic foams, where we label the thickness of each facet:
        \[
            \begin{tikzpicture} [anchorbase, line cap = round, scale=.5,fill opacity=0.2]
	\path[fill=cl_dark_blue] (2.25,3) to (.75,3) to (.75,0) to (2.25,0);
	\path[fill=cl_red] (.75,3) to [out=225,in=0] (-.5,2.5) to (-.5,-.5) to [out=0,in=225] (.75,0);
	\path[fill=cl_red] (.75,3) to [out=135,in=0] (-1,3.5) to (-1,.5) to [out=0,in=135] (.75,0);	
	\draw [very thick] (2.25,0) to (.75,0);
	\draw [very thick] (.75,0) to [out=135,in=0] (-1,.5);
	\draw [very thick] (.75,0) to [out=225,in=0] (-.5,-.5);
	\draw[very thick, black, dashed] (.75,0) to (.75,3);
	\draw [very thick] (2.25,3) to (2.25,0);
	\draw [very thick] (-1,3.5) to (-1,.5);
	\draw [very thick] (-.5,2.5) to (-.5,-.5);
	\draw [very thick] (2.25,3) to (.75,3);
	\draw [very thick] (.75,3) to [out=135,in=0] (-1,3.5);
	\draw [very thick] (.75,3) to [out=225,in=0] (-.5,2.5);
	\node [cl_dark_blue, opacity=1]  at (1.5,2.5) {\tiny{$_{a+b}$}};
	\node[cl_red, opacity=1] at (-.75,3.25) {\tiny{$b$}};
	\node[cl_red, opacity=1] at (-.25,2.25) {\tiny{$a$}};		
    \end{tikzpicture} \qquad
    \begin{tikzpicture} [anchorbase, line cap = round, scale=.5,fill opacity=0.2]
	\path[fill=cl_dark_blue] (-2.25,3) to (-.75,3) to (-.75,0) to (-2.25,0);
	\path[fill=cl_red] (-.75,3) to [out=45,in=180] (.5,3.5) to (.5,.5) to [out=180,in=45] (-.75,0);
	\path[fill=cl_red] (-.75,3) to [out=315,in=180] (1,2.5) to (1,-.5) to [out=180,in=315] (-.75,0);	
	\draw [very thick] (-2.25,0) to (-.75,0);
	\draw [very thick] (-.75,0) to [out=315,in=180] (1,-.5);
	\draw [very thick] (-.75,0) to [out=45,in=180] (.5,.5);
	\draw[very thick, black, dashed] (-.75,0) to (-.75,3);
	\draw [very thick] (-2.25,3) to (-2.25,0);
	\draw [very thick] (1,2.5) to (1,-.5);
	\draw [very thick] (.5,3.5) to (.5,.5);
	\draw [very thick] (-2.25,3) to (-.75,3);
	\draw [very thick] (-.75,3) to [out=315,in=180] (1,2.5);
	\draw [very thick] (-.75,3) to [out=45,in=180] (.5,3.5);
	\node [cl_dark_blue, opacity=1]  at (-1.5,2.5) {\tiny{$_{a+b}$}};
	\node[cl_red, opacity=1] at (.25,3.25) {\tiny{$b$}};
	\node[cl_red, opacity=1] at (.75,2.25) {\tiny{$a$}};		
\end{tikzpicture} \qquad 
\begin{tikzpicture} [anchorbase, line cap = round, scale=.5,fill opacity=0.2]
	\path [fill=cl_red] (4.25,-.5) to (4.25,2) to [out=165,in=15] (-.5,2) to (-.5,-.5) to 
	[out=0,in=225] (.75,0) to [out=90,in=180] (1.625,1.25) to [out=0,in=90] 
	(2.5,0) to [out=315,in=180] (4.25,-.5);
	\path [fill=cl_red] (3.75,.5) to (3.75,3) to [out=195,in=345] (-1,3) to (-1,.5) to 
	[out=0,in=135] (.75,0) to [out=90,in=180] (1.625,1.25) to [out=0,in=90] 
	(2.5,0) to [out=45,in=180] (3.75,.5);
	\path[fill=cl_dark_blue] (.75,0) to [out=90,in=180] (1.625,1.25) to [out=0,in=90] (2.5,0);
	\draw [very thick] (2.5,0) to (.75,0);
	\draw [very thick] (.75,0) to [out=135,in=0] (-1,.5);
	\draw [very thick] (.75,0) to [out=225,in=0] (-.5,-.5);
	\draw [very thick] (3.75,.5) to [out=180,in=45] (2.5,0);
	\draw [very thick] (4.25,-.5) to [out=180,in=315] (2.5,0);
	\draw [very thick, black, dashed] (.75,0) to [out=90,in=180] (1.625,1.25);
	\draw [very thick, black, dashed] (1.625,1.25) to [out=0,in=90] (2.5,0);
	\draw [very thick] (3.75,3) to (3.75,.5);
	\draw [very thick] (4.25,2) to (4.25,-.5);
	\draw [very thick] (-1,3) to (-1,.5);
	\draw [very thick] (-.5,2) to (-.5,-.5);
	\draw [very thick] (4.25,2) to [out=165,in=15] (-.5,2);
	\draw [very thick] (3.75,3) to [out=195,in=345] (-1,3);
	\node [cl_dark_blue, opacity=1]  at (1.625,.5) {\tiny{$_{a+b}$}};
	\node[cl_red, opacity=1] at (3.5,2.65) {\tiny{$b$}};
	\node[cl_red, opacity=1] at (4,1.85) {\tiny{$a$}};		
\end{tikzpicture} \qquad 
\begin{tikzpicture} [anchorbase, line cap = round, scale=.5,fill opacity=0.2]
	\path [fill=cl_red] (4.25,2) to (4.25,-.5) to [out=165,in=15] (-.5,-.5) to (-.5,2) to
	[out=0,in=225] (.75,2.5) to [out=270,in=180] (1.625,1.25) to [out=0,in=270] 
	(2.5,2.5) to [out=315,in=180] (4.25,2);
	\path [fill=cl_red] (3.75,3) to (3.75,.5) to [out=195,in=345] (-1,.5) to (-1,3) to [out=0,in=135]
	(.75,2.5) to [out=270,in=180] (1.625,1.25) to [out=0,in=270] 
	(2.5,2.5) to [out=45,in=180] (3.75,3);
	\path[fill=cl_dark_blue] (2.5,2.5) to [out=270,in=0] (1.625,1.25) to [out=180,in=270] (.75,2.5);
	\draw [very thick] (4.25,-.5) to [out=165,in=15] (-.5,-.5);
	\draw [very thick] (3.75,.5) to [out=195,in=345] (-1,.5);
	\draw [very thick, black, dashed] (2.5,2.5) to [out=270,in=0] (1.625,1.25);
	\draw [very thick, black, dashed] (1.625,1.25) to [out=180,in=270] (.75,2.5);
	\draw [very thick] (3.75,3) to (3.75,.5);
	\draw [very thick] (4.25,2) to (4.25,-.5);
	\draw [very thick] (-1,3) to (-1,.5);
	\draw [very thick] (-.5,2) to (-.5,-.5);
	\draw [very thick] (2.5,2.5) to (.75,2.5);
	\draw [very thick] (.75,2.5) to [out=135,in=0] (-1,3);
	\draw [very thick] (.75,2.5) to [out=225,in=0] (-.5,2);
	\draw [very thick] (3.75,3) to [out=180,in=45] (2.5,2.5);
	\draw [very thick] (4.25,2) to [out=180,in=315] (2.5,2.5);
	\node [cl_dark_blue, opacity=1]  at (1.625,2) {\tiny{$_{a+b}$}};
	\node[cl_red, opacity=1] at (3.5,2.65) {\tiny{$b$}};
	\node[cl_red, opacity=1] at (4,1.85) {\tiny{$a$}};		
\end{tikzpicture}
        \]
        \[\begin{tikzpicture} [anchorbase, line cap = round, scale=.5,fill opacity=0.2]
	\path[fill=cl_dark_blue] (-.75,4) to [out=270,in=180] (0,2.5) to [out=0,in=270] (.75,4) .. controls (.5,4.5) and (-.5,4.5) .. (-.75,4);
	\path[fill=cl_red] (-.75,4) to [out=270,in=180] (0,2.5) to [out=0,in=270] (.75,4) -- (2,4) -- (2,1) -- (-2,1) -- (-2,4) -- (-.75,4);
	\path[fill=cl_dark_blue] (-.75,4) to [out=270,in=180] (0,2.5) to [out=0,in=270] (.75,4) .. controls (.5,3.5) and (-.5,3.5) .. (-.75,4);
	\draw[very thick] (2,1) -- (-2,1);
	\path (.75,1) .. controls (.5,.5) and (-.5,.5) .. (-.75,1); 
	\draw [very thick, black, dashed] (-.75,4) to [out=270,in=180] (0,2.5) to [out=0,in=270] (.75,4);
	\draw[very thick] (2,4) -- (2,1);
	\draw[very thick] (-2,4) -- (-2,1);
	\draw[very thick] (2,4) -- (.75,4);
	\draw[very thick] (-.75,4) -- (-2,4);
	\draw[very thick] (.75,4) .. controls (.5,3.5) and (-.5,3.5) .. (-.75,4);
	\draw[very thick] (.75,4) .. controls (.5,4.5) and (-.5,4.5) .. (-.75,4);
	\node [cl_red, opacity=1]  at (1.5,3.5) {\tiny{$_{a+b}$}};
	\node[cl_dark_blue, opacity=1] at (-.25,3.375) {\tiny{$a$}};
	\node[cl_dark_blue, opacity=1] at (-.25,4.1) {\tiny{$b$}};	
\end{tikzpicture} \qquad
\begin{tikzpicture} [anchorbase, line cap = round, scale=.5,fill opacity=0.2]
	\path[fill=cl_dark_blue] (-.75,-4) to [out=90,in=180] (0,-2.5) to [out=0,in=90] (.75,-4) .. controls (.5,-4.5) and (-.5,-4.5) .. (-.75,-4);
	\path[fill=cl_red] (-.75,-4) to [out=90,in=180] (0,-2.5) to [out=0,in=90] (.75,-4) -- (2,-4) -- (2,-1) -- (-2,-1) -- (-2,-4) -- (-.75,-4);
	\path[fill=cl_dark_blue] (-.75,-4) to [out=90,in=180] (0,-2.5) to [out=0,in=90] (.75,-4) .. controls (.5,-3.5) and (-.5,-3.5) .. (-.75,-4);
	\draw[very thick] (2,-1) -- (-2,-1);
	\path (.75,-1) .. controls (.5,-.5) and (-.5,-.5) .. (-.75,-1); 
	\draw [very thick, black, dashed] (.75,-4) to [out=90,in=0] (0,-2.5) to [out=180,in=90] (-.75,-4);
	\draw[very thick] (2,-4) -- (2,-1);
	\draw[very thick] (-2,-4) -- (-2,-1);
	\draw[very thick] (2,-4) -- (.75,-4);
	\draw[very thick] (-.75,-4) -- (-2,-4);
	\draw[very thick] (.75,-4) .. controls (.5,-3.5) and (-.5,-3.5) .. (-.75,-4);
	\draw[very thick] (.75,-4) .. controls (.5,-4.5) and (-.5,-4.5) .. (-.75,-4);
	\node [cl_red, opacity=1]  at (1.25,-1.25) {\tiny{$_{a+b}$}};
	\node[cl_dark_blue, opacity=1] at (-.25,-3.4) {\tiny{$b$}};
	\node[cl_dark_blue, opacity=1] at (-.25,-4.1) {\tiny{$a$}};
\end{tikzpicture} \qquad 
\begin{tikzpicture} [anchorbase, line cap = round, scale=.5,fill opacity=0.2]
	\path[fill=cl_red] (-2.5,4) to [out=0,in=135] (-.75,3.5) to [out=270,in=90] (.75,.25)
	to [out=135,in=0] (-2.5,1);
	\path[fill=cl_dark_blue] (-.75,3.5) to [out=270,in=125] (.29,1.5) to [out=55,in=270] (.75,2.75) 
	to [out=135,in=0] (-.75,3.5);
	\path[fill=cl_dark_blue] (-.75,-.5) to [out=90,in=235] (.29,1.5) to [out=315,in=90] (.75,.25) 
	to [out=225,in=0] (-.75,-.5);
	\path[fill=cl_red] (-2,3) to [out=0,in=225] (-.75,3.5) to [out=270,in=125] (.29,1.5)
	to [out=235,in=90] (-.75,-.5) to [out=135,in=0] (-2,0);
	\path[fill=cl_red] (-1.5,2) to [out=0,in=225] (.75,2.75) to [out=270,in=90] (-.75,-.5)
	to [out=225,in=0] (-1.5,-1);
	\path[fill=cl_red] (2,3) to [out=180,in=0] (.75,2.75) to [out=270,in=55] (.29,1.5)
	to [out=305,in=90] (.75,.25) to [out=0,in=180] (2,0);
	\draw[very thick] (2,0) to [out=180,in=0] (.75,.25);
	\draw[very thick] (.75,.25) to [out=225,in=0] (-.75,-.5);
	\draw[very thick] (.75,.25) to [out=135,in=0] (-2.5,1);
	\draw[very thick] (-.75,-.5) to [out=135,in=0] (-2,0);
	\draw[very thick] (-.75,-.5) to [out=225,in=0] (-1.5,-1);
	\draw[very thick, black, dashed] (-.75,3.5) to [out=270,in=90] (.75,.25);
	\draw[very thick, black, dashed] (.75,2.75) to [out=270,in=90] (-.75,-.5);	
	\draw[very thick] (-1.5,-1) -- (-1.5,2);	
	\draw[very thick] (-2,0) -- (-2,3);
	\draw[very thick] (-2.5,1) -- (-2.5,4);	
	\draw[very thick] (2,3) -- (2,0);
	\draw[very thick] (2,3) to [out=180,in=0] (.75,2.75);
	\draw[very thick] (.75,2.75) to [out=135,in=0] (-.75,3.5);
	\draw[very thick] (.75,2.75) to [out=225,in=0] (-1.5,2);
	\draw[very thick]  (-.75,3.5) to [out=225,in=0] (-2,3);
	\draw[very thick]  (-.75,3.5) to [out=135,in=0] (-2.5,4);
	\node[cl_red, opacity=1] at (-2.25,3.375) {\tiny$c$};
	\node[cl_red, opacity=1] at (-1.75,2.75) {\tiny$b$};	
	\node[cl_red, opacity=1] at (-1.25,1.75) {\tiny$a$};
	\node[cl_dark_blue, opacity=1] at (0,2.75) {\tiny$_{b+c}$};
	\node[cl_dark_blue, opacity=1] at (0,.25) {\tiny$_{a+b}$};
	\node[cl_red, opacity=1] at (1.35,2.5) {\tiny$_{a+b}$};	
	\node[cl_red, opacity=1] at (1.35,2) {\tiny$_{+c}$};	
\end{tikzpicture} \qquad 
\begin{tikzpicture} [anchorbase, line cap = round, scale=.5,fill opacity=0.2]
	\path[fill=cl_red] (-2.5,4) to [out=0,in=135] (.75,3.25) to [out=270,in=90] (-.75,.5)
	to [out=135,in=0] (-2.5,1);
	\path[fill=cl_blue] (-.75,2.5) to [out=270,in=125] (-.35,1.5) to [out=45,in=270] (.75,3.25) 
	to [out=225,in=0] (-.75,2.5);
	\path[fill=cl_blue] (-.75,.5) to [out=90,in=235] (-.35,1.5) to [out=315,in=90] (.75,-.25) 
	to [out=135,in=0] (-.75,.5);	
	\path[fill=cl_red] (-2,3) to [out=0,in=135] (-.75,2.5) to [out=270,in=125] (-.35,1.5) 
	to [out=235,in=90] (-.75,.5) to [out=225,in=0] (-2,0);
	\path[fill=cl_red] (-1.5,2) to [out=0,in=225] (-.75,2.5) to [out=270,in=90] (.75,-.25)
	to [out=225,in=0] (-1.5,-1);
	\path[fill=cl_red] (2,3) to [out=180,in=0] (.75,3.25) to [out=270,in=45] (-.35,1.5) 
	to [out=315,in=90] (.75,-.25) to [out=0,in=180] (2,0);				
	\draw[very thick] (2,0) to [out=180,in=0] (.75,-.25);
	\draw[very thick] (.75,-.25) to [out=135,in=0] (-.75,.5);
	\draw[very thick] (.75,-.25) to [out=225,in=0] (-1.5,-1);
	\draw[very thick]  (-.75,.5) to [out=225,in=0] (-2,0);
	\draw[very thick]  (-.75,.5) to [out=135,in=0] (-2.5,1);	
	\draw[very thick, black, dashed] (-.75,2.5) to [out=270,in=90] (.75,-.25);
	\draw[very thick, black, dashed] (.75,3.25) to [out=270,in=90] (-.75,.5);
	\draw[very thick] (-1.5,-1) -- (-1.5,2);	
	\draw[very thick] (-2,0) -- (-2,3);
	\draw[very thick] (-2.5,1) -- (-2.5,4);	
	\draw[very thick] (2,3) -- (2,0);
	\draw[very thick] (2,3) to [out=180,in=0] (.75,3.25);
	\draw[very thick] (.75,3.25) to [out=225,in=0] (-.75,2.5);
	\draw[very thick] (.75,3.25) to [out=135,in=0] (-2.5,4);
	\draw[very thick] (-.75,2.5) to [out=135,in=0] (-2,3);
	\draw[very thick] (-.75,2.5) to [out=225,in=0] (-1.5,2);
	\node[cl_red, opacity=1] at (-2.25,3.75) {\tiny$c$};
	\node[cl_red, opacity=1] at (-1.75,2.75) {\tiny$b$};	
	\node[cl_red, opacity=1] at (-1.25,1.75) {\tiny$a$};
	\node[cl_dark_blue, opacity=1] at (-.125,2.25) {\tiny$_{a+b}$};
	\node[cl_blue, opacity=1] at (-.125,.75) {\tiny$_{b+c}$};
	\node[cl_red, opacity=1] at (1.35,2.75) {\tiny$_{a+b}$};
	\node[cl_red, opacity=1] at (1.35,2.25) {\tiny$_{+c}$};	
\end{tikzpicture}\]
Foams are considered as 2-morphisms from the web appearing on the bottom to the web appearing at the top. Facets of foams with thickness $a$ can be decorated by symmetric polynomials in $a$-variables.
Foams are considered up to (certain) isotopies that preserve the progressiveness of webs obtained as level-sets and a list of further local linear relations as in \cite[Definition 3.1]{QR16}. For now, the colouring of the foams is only for aesthetic purpose, but later colours will be used to encode certain decorations by idempotents. 
    \end{itemize}
    The $\glN$ foam category $\NFoam_+$ is defined as the quotient of $\Foam_+$ by the ideal of 2-morphisms generated by foams that contain a facet with label larger than $N$. We denote by $\addFoam_+$ and $\addNFoam_+$ the local additive completions of $\Foam_+$ and $\NFoam_+$ respectively.
\end{definition}

     As in \cite{RW16}, we ignore the grading \cite[Definition 3.3]{QR16} on $\NFoam_+$ since it is not preserved when passing to the deformed version of the category. Sometimes it is useful to allow web edges and foam facets labeled by $0$, which should be considered as being erased. Polynomials in zero variables on such facets act as global scaling.

\begin{remark}
    For a natural number $n\in \N_0$, let $\Comp(n)$ denote the set of compositions (unordered partitions) of $n$, equipped with the partial order given by refining compositions. If $n\neq 0$, then its coarsest composition is $(n)$ and its finest one is $(1,\dots,1)$ with $n$ entries equal to $1$. The only composition of $0$ is the empty composition. The union $\Comp:=\bigsqcup_{n\in\N_0} \Comp(n)$ is again partially ordered and carries a monoidal structure given by concatenating compositions. The monoidal structure is $\N_0$-graded in the sense that compositions of $m,n\in \N_0$ concatenate to form a composition of $m+n\in \N_0$. 

    One possible perspective on foams, exploited in \cite{DW25}, is to interpret them as graphical language for monoidal $\infty$-functors out of $\Comp$ into monoidal $(2,2)$-categories, such that the images of all $1$-morphisms form a coherent system of ambidextrous adjunctions. Indeed, the objects of $\Comp$ are in bijection with the objects of any of the 2-categories from \Cref{def:foams}, refinements of compositions correspond to splitting webs, both their left and right adjoints correspond to merging webs (up to, perhaps, a grading shift autoequivalence), the zip, unzip, digon opening and digon closing foams correspond to the units and counits of these adjunctions, and the remaining foam generators are witnesses of the associativity of refinement in $\Comp$. 

    In \cite{DW25}, such structures occur in the context of factorizing families of perverse schobers of Coxeter type $A$. A prototypical example is given by the \emph{Soergel cube} of ambixdextrous adjunctions involving induction and restriction functors between partially symmetric polynomial rings, i.e. singular Bott-Samelson bimodules. We return to this relationship in \Cref{prop:foambsbim}.
\end{remark}

\begin{conv}
    Let $\Sigma$ be a multiset of $N$ complex numbers. We write $e_i(\Sigma)$ for the $i$th elementary symmetric polynomial in $\Sigma$, the signed $(N-i)$-th coefficient of the monic degree $N$ complex polynomial with root multiset $\Sigma$:
    \[
    P_{\Sigma}(X):=\prod_{\lambda\in \Sigma} (x-\lambda) = \sum_{i=0}^N (-1)^{N-i} e_{N-i}(\Sigma) X^i 
    \]
\end{conv}

\begin{definition}[{\cite[Section 4]{RW16}}]  
\label{def:deformed_foams}
    Consider the ideal of 2-morphisms in $\addNFoam_{+}$  generated by the following relation on 1-labeled foam facets
    \[ 0 = \sum_{i = 0}^{N} (-1)^{N-i-1}e_{N-i}(\Sigma) \isheet \]
    where a dot with a label $i$ denotes the polynomial $x^i$, which is symmetric in a single variable. Equivalently, the relation can be expressed as $P_{\Sigma}(\bullet)=0$, where $\bullet$ denotes the operator represented by the 1-labeled sheet decorated by $x$.
    We obtain the \emph{deformed foam 2-category} $\defNFoam$ as quotient of  $\addNFoam_{+}$ by this ideal. 
\end{definition}

\begin{proposition}
    \label{prop:monoidal_foams}
    The 2-categories $\Foam_+$, $\addFoam_+$, $\NFoam_+$, $\addNFoam_+$, and $\defNFoam$ admit the structure of semistrict monoidal 2-categories, effectively by definition. More explicitly:
    \begin{itemize}
        \item The empty sequence serves as the monoidal unit. 
        \item For two objects $\mathbf{a} = (a_1, \ldots, a_k)$ and $ \mathbf{b} = (b_1, \ldots , b_l)$, their tensor product $\mathbf{a} \boxtimes \mathbf{b}$ is the sequence $(a_1 \ldots a_k, b_1, \ldots , b_l)$. 
        \item For an object $\mathbf{a}$ and a web $ \mathbf{c} \ot \mathbf{b} \colon W$, their tensor product $\mathbf{a} \boxtimes \mathbf{c} \ot \mathbf{a} \boxtimes \mathbf{b} \colon \mathbf{a} \boxtimes W$  is the vertical stacking of $W$ and the identity web $\id_{\mathbf{a}}$. One can analogously define the web $W \boxtimes \mathbf{a}$.
        \item For a foam $F \colon W_1 \to W_2$ between webs $\mathbf{c} \ot \mathbf{b} \colon W_1, W_2 $ and a sequence $\mathbf{a}$, the tensor product $\mathbf{a} \boxtimes F \colon \mathbf{a} \boxtimes W_1 \to \mathbf{a} \boxtimes W_2$ as the foam stacking $F$ with $\id_{\id_{\mathbf{a}}}$. One analogously defines the foam $F \boxtimes \mathbf{a}$. 
        \item The 2-isomorphism from Condition (7) in \cite{BN20} is the interchanger foam, represented below for arbitrary webs $\mathbf{a'} \ot \mathbf{a} \colon W$ and $\mathbf{b'} \ot \mathbf{b} \colon V$:
        \[
        \begin{tikzpicture}[line cap = round, scale=.5]



\fill[cl_red, fill opacity = .2](2.75,-4.5) to [out=100,in=300] (-0.25,0) to (-1.75,0) to (-1.75,-4.5) to (2.75,-4.5);
\fill[cl_dark_blue,fill opacity = .2] (4.75,-4.5) to [out=100,in=300] (1.75,0) to (6.25, 0) to (6.25, 0) to (6.25, -4.5); 

 \draw [very thick] (2.5,-5) to [out=100,in=300] (-.5,-.5);
 \draw [very thick] (4.5,-5) to [out=100,in=300] (1.5,-.5);
 \draw [very thick, gray] (5,-4) to [out=100,in=300] (2,.5);
 \draw [very thick, gray] (3,-4) to [out=100,in=300] (0,.5);
\draw [very thick, dashed] (4.75,-4.5) to [out=100,in=300] (1.75,0);
\draw [very thick, dashed] (2.75,-4.5) to [out=100,in=300] (-.25,0);

\draw [very thick] (-0.5,-.5) to (1.5,-.5);
\draw [very thick] (0,.5) to (2,.5);
\draw [very thick] (-0.5,-.5) to (0,.5);
\draw [very thick] (1.5,-.5) to (2,.5);
\draw [very thick] (-1.75,0) to (-.25,0);
\draw [very thick] (1.75,0) to (6.25, 0);

\draw [very thick] (2.5,-5) to (4.5,-5);
\draw [very thick] (3,-4) to (5,-4);
\draw [very thick] (2.5,-5) to (3,-4);
\draw [very thick] (4.5,-5) to (5,-4);
\draw [very thick] (-1.75,-4.5) to (2.75,-4.5);
\draw [very thick] (4.75,-4.5) to (6.25, -4.5);

\draw [very thick](-1.75,0) to (-1.75,-4.5);
\draw [very thick](6.25, -4.5) to  (6.25, 0);

\node at (6.25, 0)[inner sep=.2pt, label=right:\tiny{$b$}](btop){};
\node at (-1.75,0)[inner sep=.2pt, label=left:\tiny{$b'$}](b'top){}; 
\node at (.8,-.5)[inner sep=.2pt, label=\small{$V$}](Vtop){}; 
\node at (3.75,-5)[inner sep=.2pt, label=\small{$V$}](Vbot){};


\fill[cl_green, fill opacity = .2](-0.15,-5.5) to [out=60,in=245] (3.15,-1) to (-1.25,-1) to (-1.25,-5.5) to (-0.15,-5.5);
\fill[cl_yellow, fill opacity = .2] (1.85,-5.5) to [out=50,in=251](5.15,-1) to (6.85, -1) to (6.85, -5.5) to (1.85,-5.5);
\fill[white] (2.9,-1.5) to (4.9,-1.5) to (5.4,-.5) to (3.4,-.5) to (2.9,-1.5);

\draw [very thick, gray] (0.1,-5) to [out=50,in =245] (3.4,-.5) ;
\draw [very thick, gray] (2,-5) to [out=50,in=245](5.4,-.5);
\draw [very thick]  (-.4,-6) to [out=60,in=250] (2.9,-1.5);
\draw [very thick] (1.6,-6) to [out=60,in=250](4.9,-1.5);
\draw [very thick, opacity = 0]  (-0.15,-5.5) to [out=60,in=250] (3.15,-1);
\draw [very thick, opacity = 0] (1.85,-5.5) to [out=60,in=250](5.15,-1);

\draw [very thick] (2.9,-1.5) to (4.9,-1.5);
\draw [very thick] (3.4,-.5) to (5.4,-.5);
\draw [very thick] (2.9,-1.5) to (3.4,-.5);
\draw [very thick] (4.9,-1.5) to (5.4,-.5);
\draw [very thick] (-1.25,-1) to (3.15,-1);
\draw [very thick] (5.15,-1) to (6.85, -1);

\draw [very thick] (-0.4,-6) to (1.6,-6);
\draw [very thick] (0.1,-5) to (2,-5);
\draw [very thick] (-.4,-6) to (0.1,-5);
\draw [very thick] (1.6,-6) to (2,-5);
\draw [very thick] (-1.25,-5.5) to (-0.15,-5.5);
\draw [very thick] (1.85,-5.5) to (6.85, -5.5);

\draw [very thick](-1.25,-1) to (-1.25,-5.5);
\draw [very thick](6.85, -5.5) to  (6.85, -1);

\node at (6.85, -5.5)[inner sep=.2pt, label=right:\tiny{$a$}](btop){};
\node at (-1.25,-5.5)[inner sep=.2pt, label=left:\tiny{$a'$}](b'top){}; 
\node at (4.2,-1.5)[inner sep=.2pt, label=\small{$W$}](Wtop){}; 
\node at (.9,-6.05)[inner sep=.2pt, label=\small{$W$}](Wbot){}; 
\end{tikzpicture} \]
    \end{itemize}
\end{proposition}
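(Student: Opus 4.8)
The plan is to verify the axioms of a semistrict monoidal $2$-category in the sense of \cite{BN20}, exploiting that the entire structure is assembled from disjoint stacking operations, so that almost everything holds strictly and on the nose; this mirrors the strategy used for Soergel bimodules in \cite[Proposition 2.13]{SW24} and in \Cref{prop:Sbimmonbicat}. The key organisational point is that, as in any semistrict monoidal structure, one does \emph{not} define $\boxtimes$ on a pair of non-identity $1$-morphisms directly: for whiskered webs $W\colon\mathbf a'\ot\mathbf a$ and $V\colon\mathbf b'\ot\mathbf b$ one sets $W\boxtimes V:=(W\boxtimes\mathbf b')\circ(\mathbf a\boxtimes V)$, and the only genuinely monoidal datum is the interchanger comparing this with the other whiskering order $(\mathbf a'\boxtimes V)\circ(W\boxtimes\mathbf b)$.

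First I would treat $\Foam_+$. On objects, $\boxtimes$ is concatenation of finite sequences, hence strictly associative with the empty sequence as a strict two-sided unit. The operations $\mathbf a\boxtimes(-)$ and $(-)\boxtimes\mathbf a$ on $1$- and $2$-morphisms are disjoint stacking with $\id_{\mathbf a}$ and $\id_{\id_{\mathbf a}}$; these are strictly associative and unital in the sequence variable, strictly preserve identity webs and identity foams, and commute with both horizontal and vertical composition of foams because stacking with an identity takes place in a disjoint region of space-time. This is precisely the content of the conditions of \cite{BN20} that do not involve the interchanger, namely strict $2$-functoriality of $\boxtimes$ in each variable separately. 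Along the way one checks well-definedness, which is immediate: the local relations of \cite[Definition 3.1]{QR16} and the progressiveness-preserving isotopies are local in nature, hence survive disjoint stacking with an identity foam.

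It then remains to install the interchanger of Condition (7) of \cite{BN20}. For whiskered webs $W\colon\mathbf a'\ot\mathbf a$ and $V\colon\mathbf b'\ot\mathbf b$ I take $\phi_{W,V}$ to be the displayed ``sliding'' foam that exchanges the order in which $W$ and $V$ are performed. Since $W$ and $V$ occupy disjoint spatial slabs, $\phi_{W,V}$ is invertible, its inverse being the foam sliding in the opposite direction; both composites reduce to identity foams already under the isotopies and relations of \cite{QR16}. Naturality of $\phi$ in each argument, the remaining coherence identities for $\phi$ (roughly, that sliding three $1$-morphisms past one another in the two possible orders agrees), and the triviality of $\phi$ on identity $1$-morphisms are then all checked by the same principle: both sides of each required equality are foams built from ``sliding'' pieces supported in mutually disjoint regions, hence are related by an evident progressiveness-preserving isotopy. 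I expect this bookkeeping---matching the displayed foams against the precise list of conditions in \cite{BN20}---to be the only laborious step, although no individual verification is deep.

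Finally I would pass to the quotients and completions. The defining ideal of $\NFoam_+$ is generated by foams containing a facet labeled $>N$; this is a monoidal ideal, since stacking such a foam with $\id_{\mathbf a}$ on either side leaves the offending facet untouched, so the semistrict monoidal structure descends to $\NFoam_+$. The same reasoning applies to the deformation ideal of \Cref{def:deformed_foams}, which is supported on $1$-labeled facets and is therefore preserved under whiskering, giving $\defNFoam$. For the local additive completions $\addFoam_+$ and $\addNFoam_+$ one extends $\boxtimes$ bilinearly over finite direct sums and extends the interchanger componentwise; all axioms are then inherited additively. The one point requiring a little care is confirming invertibility of the interchanger foam \emph{in the quotient category}, rather than merely up to relations---but this is immediate from the explicit sliding inverse---after which the result is, as the statement says, essentially definitional.
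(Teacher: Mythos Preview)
Your proposal is correct and follows essentially the same route as the paper: verify the axioms of \cite{BN20} for $\Foam_+$ via disjoint stacking, then propagate to the other categories. The paper's own proof is much terser---it simply asserts that the conditions of \cite[Lemma~4]{BN20} hold, invokes \cite[Remark~3.17]{SW24} to say that local additive completion preserves semistrict monoidal structure, and then observes that the remaining categories are quotients---so the one organisational point to fix in your write-up is the order: since $\defNFoam$ is by \Cref{def:deformed_foams} a quotient of $\addNFoam_+$, you should establish the monoidal structure on the additive completions \emph{before} passing to the deformation quotient, not after.
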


\begin{proof}
The data above satisfies the usual conditions \cite[Lemma 4]{BN20}, making $\Foam_+$ into a semistrict monoidal 2-category. Local additive completion preserves monoidal structures on 2-categories \cite[Remark 3.17]{SW24}, so $\addFoam_+$ inherits the structure. Finally, since the rest of the 2-categories are obtained as quotients of these previous two, they again inherit a semistrict monoidal structure. 
\end{proof}

\begin{proposition}
\label{prop:foambsbim}
The locally $\C$-linear monoidal 2-functor $\Foam_+ \to \msbsbim$ (for $k=\C$) sending
\begin{itemize}
    \item each composition $(a_1, \ldots , a_k)$ of $n\in \N_0$ to the graded ring $\C[x_1,\dots,x_n]^{S_{a_1}\times \cdots \times S_{a_k}}$;
    \item splitting trivalent vertices to graded induction bimodules between partially symmetric polynomial rings and merging trivalent vertices to the grading shifted restriction bimodules as in \Cref{lem:indresadj};
    \item the generating foams displayed in \Cref{def:foams} to identity morphisms, unit and counit morphisms of the ambidextrous adjunctions from \Cref{lem:indresadj}, and associativity morphisms respectively
\end{itemize}
is an equivalence of semistrict monoidal 2-categories.
\end{proposition}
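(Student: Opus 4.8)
The plan is to verify in turn that the prescribed assignment descends to a well-defined $2$-functor $\Phi\colon\Foam_+\to\msbsbim$, that $\Phi$ is bijective on objects and essentially surjective on $1$-morphisms, that it induces isomorphisms on all $2$-morphism spaces, and finally that it is compatible with the two semistrict monoidal structures; the essential content lies in the statement about $2$-morphisms. For well-definedness one checks that the local relations defining $\Foam_+$ \cite[Definition 3.1]{QR16} survive the assignment. This is a relation-by-relation check, but a conceptual one: each generating foam is sent to a structure $2$-morphism of the coherent family of ambidextrous adjunctions $\cdots\dashv\Res\dashv\Ind\dashv\Res\dashv\cdots$ of \Cref{lem:indresadj} --- the zip and unzip foams and the digon-opening and closing foams to units and counits, the cylinder foams to identities, and the remaining two generators to associators of refinement --- and the defining relations of $\Foam_+$ are precisely the coherence axioms of such a structure, together with the neck-cutting and bubble-evaluation relations, which translate into the explicit dual-basis formulas for the Frobenius extensions $R^J\subset R^I$, i.e. the maps $\delta$ and $d$ of \eqref{eq:frob_ext_maps}. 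From the viewpoint of the Remark preceding this proposition, $\Foam_+$ is the graphical calculus for a monoidal functor out of $\Comp$ landing in a coherent system of ambidextrous adjunctions, and the Soergel cube of induction and restriction bimodules --- i.e. \Cref{lem:indresadj} together with associativity of parabolic induction --- provides exactly such a functor, yielding $\Phi$ with no further computation.

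\textbf{Objects, $1$-morphisms and monoidality.} Parabolic (Young) subgroups of $S_n$ are exactly the subgroups $S_{a_1}\times\cdots\times S_{a_k}$ indexed by compositions $(a_1,\dots,a_k)$ of $n$, so $\Phi$ is a bijection on objects. Every singular Bott--Samelson bimodule is, up to grading shift, an iterated relative tensor product of induction and restriction bimodules attached to inclusions $R^J\subset R^I$; since webs are generated under horizontal and vertical stacking by the merge and split trivalent vertices, which $\Phi$ sends precisely to these elementary (shifted) restriction and induction bimodules, every singular Bott--Samelson bimodule is the image of a web, so $\Phi$ is essentially surjective on $1$-morphisms. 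For monoidality one notes that both sides carry the ``diagrams side by side'' structure: on $\Foam_+$ this is the semistrict structure of \Cref{prop:monoidal_foams}, and on $\msbsbim$ it is parabolic induction $\boxtimes$ (\Cref{monoidalstr}, \Cref{prop:Sbimmonbicat}), realized on polynomial rings by the variable-shifting homomorphisms; $\Phi$ visibly intertwines these on objects, $1$- and $2$-morphisms, and sends the interchanger foam to the interchange bimodule isomorphism used in the proof of \Cref{prop:Sbimmonbicat}. Thus, once $\Phi$ is shown to be a local equivalence, it is automatically an equivalence of semistrict monoidal $2$-categories.

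\textbf{$2$-morphisms, and the main obstacle.} It remains to show $\Phi$ induces isomorphisms $\Hom_{\Foam_+}(W_1,W_2)\xrightarrow{\ \sim\ }\Hom_{\msbsbim}(\Phi W_1,\Phi W_2)$ for all webs $W_1,W_2$. On the foam side one produces an explicit spanning set by repeatedly applying the relations --- neck-cutting, bubble and sheet evaluations, dot migration, the associativity moves --- to reduce an arbitrary foam to a normal form; this gives a spanning set indexed by a reduced underlying foam together with admissible dot decorations on its facets modulo symmetric-function relations. On the bimodule side, iterated Frobenius reciprocity for $\Ind\dashv\Res$, Mackey-type decompositions of composites $\Res\circ\Ind$ into singular Bott--Samelson bimodules, and the identification of $\Hom$ out of a diagonal bimodule with a space of invariants reduce $\Hom_{\msbsbim}(\Phi W_1,\Phi W_2)$ to an explicitly described free module over a partially symmetric polynomial ring. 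One then checks that $\Phi$ carries the foam spanning set to a basis of this module, which yields both surjectivity and faithfulness at once. Alternatively, this isomorphism can be extracted from the known foam/singular-Soergel dictionary obtained via skew Howe duality \cite{MSV09,QR16}, of which the present proposition is the Bott--Samelson-level, non-idempotent-completed incarnation. The delicate point --- the main obstacle --- is precisely this last faithfulness claim: one must establish a genuine basis of foam $2$-morphism spaces in the \emph{undeformed}, $N=\infty$ setting, so that the symmetric-function relations on facets are all that cut down the dotted normal forms, and match it against the bimodule computation while carefully tracking the grading shifts hidden in the (co)units of the Frobenius adjunctions.
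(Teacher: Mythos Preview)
Your overall architecture---well-definedness via the foam relations, bijectivity on objects, essential surjectivity on $1$-morphisms, monoidality, and then the $2$-morphism isomorphism as the crux---matches the paper's proof exactly, and your treatment of the first four points is if anything more explicit than the paper's, which dispatches well-definedness by citing \cite[\S 4.4]{RW16} (the relation check en route to matrix factorizations) together with the factorization through singular Bott--Samelson bimodules from \cite[Remark 3.3]{MR3982970}, and declares the object/$1$-morphism/monoidal compatibility clear from construction.

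The substantive difference is in the $2$-morphism step. The paper does not attempt your direct argument (normal forms on the foam side, iterated Frobenius reciprocity and Mackey-type reductions on the bimodule side, matching a foam spanning set against a bimodule basis). Instead it invokes Webster's work \cite{MR3682839}, which realizes singular Bott--Samelson bimodules via equivariant flag categories and identifies them with the same quotients of Khovanov--Lauda categorified quantum groups that \cite{QR16} uses to define $\NFoam_+$; see also \cite[Appendix A]{HRW21} and \cite[Section 2.4]{BPRW25}. In other words, the paper outsources precisely the obstacle you flag---producing a genuine basis of foam $2$-morphism spaces in the $N=\infty$ setting and matching it with the bimodule side---to that existing literature. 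Your direct approach is sound and would be more self-contained, but carrying it through amounts to reproving a nontrivial portion of that result; your ``alternative'' via the skew-Howe dictionary is closer to what the paper actually does, though the paper's decisive citation is Webster rather than \cite{MSV09,QR16} alone.
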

\begin{proof}
    An explicit check of that the proposed mapping respects the defining relations of $\Foam_+$ is provided in \cite[\S 4.4]{RW16} as step along the way to a 2-functor to a certain 2-category of matrix factorizations. The factorization through singular Bott-Samelson bimodules is documented in \cite[Remark 3.3]{MR3982970}. The compatibility with the monoidal structure and bijectivity on the level of objects and 1-morphisms is clear from the constructions. The fact that the mapping induces isomorphisms on the vector spaces of 2-morphisms follows from work of Webster~\cite{MR3682839}, where Bott-Samelson bimodules are incarnated in terms of equivariant flag categories and, further, as quotients of Khovanov-Lauda categorified quantum groups that match the definition of $\NFoam_+$ in \cite{QR16}. See also \cite[Appendix A]{HRW21}. For a diagrammatic description of the functor we refer to \cite[Section 2.4]{BPRW25}. 
\end{proof}

Combining the equivalence from \Cref{prop:foambsbim} with the relevant quotient and completion 2-functors yields the following corollary.

\begin{corollary}
    \label{cor:2functors}
    We obtain locally $\C$-linear monoidal 2-functors:
    \begin{align*}
     \msbsbim &\to \Foam_+ \to \NFoam_+ \to \defNFoam\\
     \mssbim &\to \kar\addFoam_+\to \kar\addNFoam_+\to \kar \defNFoam\\
    \end{align*}
\end{corollary}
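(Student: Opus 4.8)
The plan is to take the monoidal equivalence $\Foam_+\xrightarrow{\simeq}\msbsbim$ of \Cref{prop:foambsbim}, invert it, and compose with the evident quotient and completion $2$-functors; nothing beyond \Cref{prop:foambsbim} and \Cref{prop:monoidal_foams} is needed, so the work is entirely bookkeeping of monoidality and of the relevant universal properties.

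For the first row, I would first pick a pseudo-inverse of the equivalence of \Cref{prop:foambsbim} and upgrade it to a locally $\C$-linear monoidal $2$-functor $\msbsbim\to\Foam_+$ (a pseudo-inverse of a monoidal equivalence carries a canonical monoidal structure, and $\C$-linearity transfers from the $\C$-linear Hom-equivalences). Then I would postcompose with the quotient $2$-functor $\Foam_+\twoheadrightarrow\NFoam_+$ of \Cref{def:foams}, the inclusion $\NFoam_+\hookrightarrow\addNFoam_+$ into the local additive completion, and the quotient $2$-functor $\addNFoam_+\twoheadrightarrow\defNFoam$ of \Cref{def:deformed_foams}; by \Cref{prop:monoidal_foams} each of these is semistrict monoidal and locally $\C$-linear, so the composite
\[
\msbsbim\xrightarrow{\simeq}\Foam_+\twoheadrightarrow\NFoam_+\hookrightarrow\addNFoam_+\twoheadrightarrow\defNFoam
\]
is the first row (one may abbreviate the last two arrows by $\NFoam_+\to\defNFoam$, or note that the deforming relation, being $\C$-linear in $2$-morphisms, already descends along $\NFoam_+$).

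For the second row, I would use that, by \eqref{SBimDef} and \Cref{def:ssbim}, $\mssbim$ is the $2$-category obtained from $\msbsbim$ by applying the additive Karoubi completion $\add{(\kar(-))}$ Hom-wise, while $\kar\addFoam_+$, $\kar\addNFoam_+$, $\kar\defNFoam$ are the Hom-wise Karoubi envelopes of the locally additive $2$-categories $\addFoam_+$, $\addNFoam_+$, $\defNFoam$. Since local additive completion (\cite[Remark 3.17]{SW24}) and Hom-wise idempotent completion are functorial, preserve semistrict monoidal structures and local $\C$-linearity (the monoidal product extends to the completed Hom-categories exactly as in the additive case), and commute up to equivalence, applying $\add{(\kar(-))}$ to the monoidal equivalence of \Cref{prop:foambsbim} yields a locally $\C$-linear monoidal $2$-functor $\mssbim\to\kar\addFoam_+$ (in fact an equivalence, although only the $2$-functor is needed), and applying it to the quotient $2$-functors $\Foam_+\to\NFoam_+$ and $\addNFoam_+\to\defNFoam$ yields locally $\C$-linear monoidal $2$-functors $\kar\addFoam_+\to\kar\addNFoam_+\to\kar\defNFoam$; composing gives the second row.

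I expect no genuine obstacle: the only point deserving a moment's attention is that the two defining relations of \Cref{def:foams} and \Cref{def:deformed_foams} generate \emph{monoidal} $2$-ideals, so that the quotients and their additive/idempotent completions remain semistrict monoidal — but this is precisely what is recorded in the proof of \Cref{prop:monoidal_foams}. All the substance of the statement sits upstream, in the equivalence $\Foam_+\simeq\msbsbim$.
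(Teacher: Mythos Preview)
Your proposal is correct and takes essentially the same approach as the paper: the paper simply states that the corollary follows by ``combining the equivalence from \Cref{prop:foambsbim} with the relevant quotient and completion 2-functors,'' and your write-up is a faithful elaboration of precisely that sentence. The only thing you add beyond the paper is the explicit bookkeeping that monoidality and local $\C$-linearity survive inversion of the equivalence, quotients, and local additive/idempotent completion, which is indeed straightforward given \Cref{prop:monoidal_foams} and the cited \cite[Remark 3.17]{SW24}.
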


\subsection{Deformations}
Let $\X$ and $\B$ be finite sets of formal variables. We write $\Sym(\X)$ for the ring of symmetric polynomials in $\X$ (with integer coefficients) and consider
\[
H(\X,t) := \sum_{n\geq 0} h_n(\X) t^n := \prod_{x\in\X} (1- tx)^{-1}, \quad E(\X,t):=\sum_{n\geq 0} e_n(\X) t^n :=\prod_{x\in\X} (1+x t),
\]
the generating functions of the complete symmetric polynomials $h_n(\X)\in \Sym(\X)$ and elementary symmetric polynomials $e_n(\X)\in \Sym(\X)$, respectively. Furthermore we write $\Sym(\X|\B)$ for the ring of polynomials in $\X\sqcup \B$ which are separately symmetric in permutations of $\X$ and permutations of $\B$. The \emph{complete symmetric polynomials} $h_n(\X-\B)$ \emph{in the difference of alphabets} $\X-\B$ are given by the generating function:
\[
\sum_{n\geq 0}h_n(\X-\B) t^n := H(\X-\B,t):= H(\X,t)H(\B,t)^{-1}=H(\X,t)E(\B,-t)
\]
yielding the expression $h_n(\X-\B)=\sum_{i=0}^n h_{n-i}(\X)(-1)^i e_i(\B)$.
For $k,N\in \N$ let $\X,\B$ be sets of variables of degree $2$ of size $|X|=k$ and $|B|=N$. 

\begin{example}[{\cite[Section 5.4]{W13}}, {\cite[Gr 5]{LI}}, {\cite[Proposition 4.5.1]{AF23}}]
    Let $0\leq k\leq N\in \N$ and consider the Grassmannian $\mathrm{Gr}(k,N)$ of $k$-dimensional complex subspaces of $\C^N$ with its natural action of the general linear group $\GLN:=\GLN(\C)$. Then the $\GLN$-equivariant cohomology ring of $\mathrm{Gr}(k,N)$ admits the presentation
    \[
H^*_{\GLN}\left(\mathrm{Gr}(k,N)\right) = \frac{\Sym(\X|\B)}{\la h_{N-k+i}(\X-\B)|i>0 \ra }
    \]
    as a module of the equivariant cohomology of the point $H^*_{\GLN}(*)=\Sym(\B)$. Here $\X$ and $\B$ are disjoint sets of variables of degree $2$ of sizes $|\X|=k$ and $|\B|=N$. The ordinary cohomology ring is obtained by setting all the variables in $\B$ to zero.
\end{example}

As in \Cref{sec:foams} we fix a (multi)set $\Sigma$ of complex numbers with $|\Sigma|=N$.

\begin{definition}   For $0\leq k\leq N$ let $\X$ denote a set of degree two variables of size $|\X|=k$ and $\Sym_\C(\X)$ the $\C$-algebra of symmetric polynomials in $\X$. The $\Sigma$-deformed cohomology ring of the Grassmannian $\mathrm{Gr}(k,N)$ as 
    \[ H_{k}^\Sigma := \frac{\Sym_\C(\X)}{\la h_{N-k+i}(\X-\Sigma)|i>0 \ra }.\]
    Although the defining ideal is not homogeneous, we can consider $H_{k}^\Sigma$ as a filtered algebra with associated graded given by the cohomology $\C$-algebra of the Grassmannian.
\end{definition}

From now on, we use the following notation related to the multiset $\Sigma$.
\begin{conv}
\label{conv:sigma}
    Let $\lambda_1, \ldots \lambda_l$ be pairwise distinct complex numbers and $N_1, \ldots, N_l$ natural numbers such that $\sum_{i = 1}^{l}N_i = N$ and $\Sigma = \{ \lambda_1^{N_1}, \ldots , \lambda_l^{N_l} \}$ is the multiset containing $\lambda_i$ exactly $N_i$ times. We let $\powerset(\Sigma)$ denote the set of multisubsets of $\Sigma$. An element $A\in \powerset(\Sigma)$ can also be described by the multiplicities $0\leq k_i\leq N_i$, by which the elements $\lambda_i$ appear in $A$. In particular we can write $A=\{\lambda_1^{k_1}, \ldots , \lambda_l^{k_l}\}$. Furthermore, we define $A_i := \{\lambda_i^{k_i}\}$ for $1\leq i \leq l$, so that $A = \biguplus_i A_i$, where we denote the multiset sum by $\uplus$. If two multisubsets $A, B \subset \Sigma$ are such that they have no roots in common, we say that $A$ and $B$ are \emph{disjoint}.
\end{conv}

\begin{theorem}[{\cite[Theorem 3.11]{RW16}}] 
\label{thm:directsum}
Let $\Sigma$ 
be as in \Cref{conv:sigma} and $0\leq k \leq N=|\Sigma|$. Then we have a canonical decomposition
\begin{equation}
\label{eq:Hsumdecomp}
H_k^{\Sigma} \cong \bigoplus_{\substack{A\in \powerset(\Sigma)\\ |A|=k }} H^\Sigma_A
\end{equation}
of $\C$-algebras. 
For $A=\{\lambda_1^{k_1}, \ldots , \lambda_l^{k_l}\}$ the summands $H_A^\Sigma$ admit a further decomposition
\begin{equation}
\label{eq:Htensordecomp}
H^\Sigma_A\cong \bigotimes_{i} H_{k_i}^{N_i}
\end{equation}
 as $\C$-algebras, so that by combining we get 
\begin{equation}
\label{eq:decomp}
    H_k^{\Sigma} \cong \bigoplus_{\sum k_i = k}
     \bigotimes_{i} H_{k_i}^{N_i}.
     \end{equation}
\end{theorem}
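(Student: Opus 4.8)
The plan is to analyze $\Spec H_k^\Sigma$ directly: first identify its maximal ideals with the size-$k$ multisubsets of $\Sigma$, then localize. By construction the associated graded of $H_k^\Sigma$ for its defining filtration is the finite-dimensional ring $H^*(\mathrm{Gr}(k,N))$, so $H_k^\Sigma$ is a finite-dimensional commutative $\C$-algebra, hence Artinian, hence canonically the product of its localizations at its finitely many maximal ideals, $H_k^\Sigma\cong\prod_{\mathfrak m}(H_k^\Sigma)_{\mathfrak m}$. A maximal ideal $\mathfrak m$ is the datum of the scalars $\epsilon_j:=e_j(\X)\bmod\mathfrak m\in\C$, equivalently of the monic polynomial $Q_{\mathfrak m}(X)=\sum_{j=0}^k(-1)^j\epsilon_j X^{k-j}$ with $\epsilon_0=1$. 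Unwinding the defining relations — the vanishing $h_{N-k+i}(\X-\Sigma)=0$ for all $i>0$ is equivalent to the power series $H(\X,t)E(\Sigma,-t)$ being a polynomial in $t$ of degree $\le N-k$, which translates into $Q_{\mathfrak m}(X)\mid P_\Sigma(X)$ in $\C[X]$ — and using unique factorization of $P_\Sigma(X)=\prod_i(X-\lambda_i)^{N_i}$, one finds that such $\mathfrak m$ exist precisely for $Q_{\mathfrak m}=\prod_i(X-\lambda_i)^{k_i}$ with $0\le k_i\le N_i$ and $\sum_i k_i=k$. This puts the maximal ideals of $H_k^\Sigma$ in bijection with the multisubsets $A=\{\lambda_1^{k_1},\dots,\lambda_l^{k_l}\}\in\powerset(\Sigma)$ of size $k$; setting $H_A^\Sigma:=(H_k^\Sigma)_{\mathfrak m_A}$ yields the canonical decomposition \eqref{eq:Hsumdecomp}.

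For \eqref{eq:Htensordecomp} I would exhibit $H_k^\Sigma$ as a moduli ring of polynomial factorizations and localize. Introducing an auxiliary alphabet $\Y$ with $|\Y|=N-k$, one checks — eliminating $\Y$ recovers exactly the relations $h_{N-k+i}(\X-\Sigma)=0$ — that $H_k^\Sigma\cong\Sym_\C(\X)\otimes_\C\Sym_\C(\Y)/\langle e_m(\X\uplus\Y)-e_m(\Sigma):1\le m\le N\rangle$, so that $H_k^\Sigma$ represents the functor sending a commutative $\C$-algebra $R$ to the set of factorizations $P_\Sigma(X)=Q(X)C(X)$ in $R[X]$ with $Q,C$ monic of degrees $k$ and $N-k$. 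The summand $H_A^\Sigma$ then represents the subfunctor of factorizations with residue of type $A$ at every point of $\Spec R$, i.e. $\overline Q=\prod_i(X-\lambda_i)^{k_i}$. Over a local ring $R$ the polynomials $(X-\lambda_i)^{N_i}$ are pairwise coprime (the $\lambda_i-\lambda_j$ are units), so by a Hensel-type argument any such factorization decomposes uniquely as a product over $i$ of factorizations $(X-\lambda_i)^{N_i}=Q_i(X)C_i(X)$ with $\deg Q_i=k_i$; hence the type-$A$ factorization functor is the product over $i$ of the factorization functors of $(X-\lambda_i)^{N_i}$ into a degree-$k_i$ and a degree-$(N_i-k_i)$ monic factor. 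Applying the same presentation with $N,k,\Sigma$ replaced by $N_i,k_i,\Sigma_i=\{\lambda_i^{N_i}\}$ identifies the $i$-th factorization functor with $H_{k_i}^{N_i}$, which is itself local since $P_{\Sigma_i}(X)=(X-\lambda_i)^{N_i}$ admits a unique monic factorization of each type. By Yoneda, $H_A^\Sigma\cong\bigotimes_i H_{k_i}^{N_i}$, which is \eqref{eq:Htensordecomp}; substituting into \eqref{eq:Hsumdecomp} gives \eqref{eq:decomp}. As a sanity check, the resulting dimension identity $\binom Nk=\sum_{\sum k_i=k}\prod_i\binom{N_i}{k_i}$ is precisely the Vandermonde identity.

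The main obstacle is the "localization of the factorization functor" in the second step: the claim that over a local $\C$-algebra $R$ a monic degree-$k$ factor of $\prod_i(X-\lambda_i)^{N_i}$ with residue of type $A$ must split as $\prod_i Q_i$ with $Q_i\mid(X-\lambda_i)^{N_i}$ of degree $k_i$. Morally this is Hensel's lemma plus the Chinese Remainder decomposition of $R[X]/(P_\Sigma)$, but since $R$ need not be reduced it has to be argued at the level of functors of points (equivalently, one produces the orthogonal idempotents $e_A\in H_k^\Sigma$ and the internal factorizing idempotents of $H_A^\Sigma$ explicitly, e.g. from the Artinian structure theorem together with interpolation in the generators $e_j(\X)$ over the finite set of maximal ideals). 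Everything else — finite-dimensionality, the maximal-ideal/multisubset bijection, the auxiliary-alphabet presentation, and the dimension count — is routine. An alternative route, perhaps closer to the original argument in \cite{RW16}, identifies $H_k^\Sigma$ with a "$k$-th exterior power" of $H_1^\Sigma=\C[X]/(P_\Sigma(X))$ and deduces both decompositions from the Chinese Remainder decomposition of $H_1^\Sigma$.
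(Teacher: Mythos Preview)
The paper does not prove this theorem at all: it is quoted verbatim as \cite[Theorem 3.11]{RW16} and used as a black box, so there is no ``paper's own proof'' to compare against. Your argument therefore has to be judged on its own merits.

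Your first step is clean and correct: $H_k^\Sigma$ is finite-dimensional over $\C$ (its associated graded is $H^*(\mathrm{Gr}(k,N))$), hence Artinian, hence the product of its localizations; and your translation of the relations $h_{N-k+i}(\X-\Sigma)=0$ into the divisibility $Q_{\mathfrak m}\mid P_\Sigma$ via generating functions is exactly right, giving the bijection between maximal ideals and size-$k$ multisubsets of $\Sigma$.

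For the tensor decomposition, the obstacle you flag is not really an obstacle. You do \emph{not} need the Hensel factorization over an arbitrary local $\C$-algebra $R$: since both $H_A^\Sigma$ and $\bigotimes_i H_{k_i}^{N_i}$ are themselves Artinian local $\C$-algebras (each $H_{k_i}^{N_i}$ has a unique maximal ideal because $(X-\lambda_i)^{N_i}$ has a unique monic degree-$k_i$ divisor over $\C$, and a finite tensor product of Artinian local $\C$-algebras with residue field $\C$ is again local), it suffices to produce inverse $\C$-algebra maps between them. The map $\bigotimes_i H_{k_i}^{N_i}\to H_A^\Sigma$ comes for free from multiplying the universal $Q_i$'s. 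For the inverse, apply Hensel's lemma \emph{once} to the universal $Q$ over the single ring $H_A^\Sigma$: this ring is Artinian local, hence complete in its maximal ideal, hence Henselian, so the residual factorization $\overline Q=\prod_i(X-\lambda_i)^{k_i}$ into pairwise coprime factors lifts uniquely to $Q=\prod_i Q_i$, and each $Q_i$ divides $(X-\lambda_i)^{N_i}$ because it is coprime to the remaining $(X-\lambda_j)^{N_j}$. That gives the map $H_A^\Sigma\to\bigotimes_i H_{k_i}^{N_i}$, and the two composites are the identity by uniqueness in Hensel. So your sketch goes through without needing the full functor-of-points formulation over non-Henselian bases.
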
 

\subsection{Graphical language for 1-categorical idempotent completion}

\begin{lemma}[{\cite[Lemma 4.2]{RW16}}]
\label{lem:End-of-edge}
For $0\leq k\leq N$ we consider a $k$-labeled edge in $\NFoam_+^{\Sigma}$, i.e.  the identity endomorphism $\id_k$ of the object $k$. Then the operation of decorating the identity foam on $\id_k$ by symmetric polynomials in $\Sym_\C(\X)$ with $|\X|=k$ descends to an algebra isomorphism
\[
H_{k}^\Sigma \;\;\xrightarrow{\cong}\;\; \End_{\NFoam_+^{\Sigma}} (\id_k)
\]
\end{lemma}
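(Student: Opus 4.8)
The plan is to identify both sides explicitly and match them. First I would recall that $\NFoam_+$ is defined as a quotient of $\Foam_+$, and that by \Cref{prop:foambsbim} the 2-category $\Foam_+$ is equivalent to $\msbsbim$; under this equivalence the object $k$ (viewed as the one-term composition, i.e. the trivial subgroup inside $S_k$) corresponds to the polynomial ring $R_k = \C[x_1,\dots,x_k]$, and the identity 1-morphism $\id_k$ corresponds to the identity bimodule ${}_{R_k}(R_k)_{R_k}$. Consequently $\End_{\Foam_+}(\id_k) \cong \End_{R_k\text{-}R_k}(R_k) \cong R_k^{S_k} = \Sym_\C(\X)$ with $|\X|=k$, since the center of $R_k$ as an $R_k$-bimodule is the ring of symmetric polynomials, and the isomorphism is realized precisely by decorating the identity foam with symmetric polynomials in the $k$ variables on the $k$-labeled facet. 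Passing from $\Foam_+$ to $\NFoam_+$ imposes that foams with facets labeled $>N$ vanish; since $k\leq N$ no such facet appears when decorating $\id_k$, so this quotient does not affect $\End(\id_k)$ beyond possibly relating decorations — but in fact for the unlabeled-beyond-$N$ facets nothing changes, so $\End_{\NFoam_+}(\id_k)\cong \Sym_\C(\X)$ still.

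Next I would pass to the deformed category $\defNFoam = \addNFoam_+ / (\text{the deformation ideal})$ of \Cref{def:deformed_foams}. The deformation relation is imposed on $1$-labeled facets: it sets $P_\Sigma(\bullet)=0$ where $\bullet$ is the operator of decorating a $1$-labeled facet by $x$. The key point is to understand what relations this induces among the symmetric-polynomial decorations on a $k$-labeled facet. The standard mechanism — already used in \cite[Section 4]{RW16} — is to split the $k$-labeled edge into $k$ separate $1$-labeled edges via a sequence of split trivalent vertices, apply the relation $P_\Sigma(x_j)=0$ on each of the $k$ resulting $1$-labeled facets indexed by a variable $x_j$, and then merge back. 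A symmetric polynomial $p(\X)$ decorating the $k$-facet pulls apart (after splitting) into the same polynomial in the $k$ variables $x_1,\dots,x_k$ on the split web. The ideal generated by $\{P_\Sigma(x_j) : 1\le j\le k\}$ inside $R_k$, intersected with the symmetric polynomials $\Sym_\C(\X)$, is exactly the ideal $\langle h_{N-k+i}(\X-\Sigma) \mid i>0\rangle$ — this is the classical fact (going back to the Grassmannian presentation, see the Example preceding \Cref{thm:directsum}) that the quotient $\Sym_\C(\X)/\langle h_{N-k+i}(\X-\Sigma)\rangle = H_k^\Sigma$ is the $\Sigma$-deformed cohomology of $\mathrm{Gr}(k,N)$, the point being that requiring each $x_j$ to be a root of $P_\Sigma$ is equivalent, on symmetric functions, to the vanishing of the complete homogeneous symmetric functions $h_{N-k+i}(\X-\Sigma)$ for $i>0$. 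Thus the decoration map descends to a surjection $H_k^\Sigma \twoheadrightarrow \End_{\NFoam_+^\Sigma}(\id_k)$.

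The remaining task is injectivity: I must show that no further relations among symmetric-polynomial decorations of $\id_k$ are forced in $\NFoam_+^\Sigma$ beyond those coming from the deformation ideal. For this I would invoke the explicit computation of the $2$-morphism spaces of $\NFoam_+$ from \cite{QR16} (equivalently, via \Cref{prop:foambsbim}, the computation of Hom-spaces between singular Bott--Samelson bimodules), which shows $\End_{\NFoam_+}(\id_k)\cong\Sym_\C(\X)$ as a free module over the base in a controlled way, and then observe that the deformation quotient on $\addNFoam_+$ acts on this $\End$ space exactly through the ideal identified above and no more — this is precisely the content of \cite[Lemma 4.2]{RW16} that is being cited, so it suffices to transport that statement through the equivalence of \Cref{prop:foambsbim} and the definition of $\defNFoam$. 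The main obstacle is this injectivity/faithfulness step: it requires knowing that the deformation relation, which is imposed only on $1$-labeled facets but then propagated through all splits, merges, and the local foam relations, does not collapse anything extra — concretely, that the "split, decorate, merge" argument is reversible and that the combinatorics of the difference-of-alphabets symmetric functions matches the ideal exactly. Fortunately all of this is available in \cite[Section 4]{RW16}, so in the write-up I would structure the proof as: (i) identify $\End_{\NFoam_+}(\id_k)\cong\Sym_\C(\X)$ via \Cref{prop:foambsbim} and the bimodule description; (ii) show the deformation ideal cuts out exactly $\langle h_{N-k+i}(\X-\Sigma)\rangle$ using the split/merge manipulation and the Grassmannian presentation; (iii) conclude the claimed isomorphism, citing \cite[Lemma 4.2]{RW16} for the faithfulness.
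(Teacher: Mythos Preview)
The paper does not prove this lemma at all: it is stated with a citation to \cite[Lemma 4.2]{RW16} and used as a black box. So there is no proof to compare against, and your write-up is more of an independent attempt at the cited result.

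That said, there is a genuine error in your step (i). Under \Cref{prop:foambsbim} the one-term composition $(k)$ is sent to $\C[x_1,\dots,x_k]^{S_k}=\Sym_\C(\X)$, i.e.\ to the invariants under the \emph{full} symmetric group, not the trivial subgroup; the object corresponding to $R_k=\C[x_1,\dots,x_k]$ is the composition $(1,\dots,1)$. Your subsequent claim that $\End_{R_k\text{-}R_k}(R_k)\cong R_k^{S_k}$ is also false: the bimodule endomorphisms of $R_k$ are $R_k$ itself, since $R_k$ is commutative. You arrive at the correct answer $\Sym_\C(\X)$ only because these two mistakes cancel. The correct route is simply that $\id_{(k)}$ is the identity bimodule on the commutative ring $\Sym_\C(\X)$, whose endomorphism algebra is $\Sym_\C(\X)$.

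Your step (iii) is circular as written: you propose to cite \cite[Lemma 4.2]{RW16} for injectivity, but that lemma \emph{is} the statement under consideration. If you want a self-contained argument, the honest work is a dimension count (the filtered algebra $H_k^\Sigma$ has the same dimension as the ordinary Grassmannian cohomology, and one checks that $\End_{\NFoam_+^\Sigma}(\id_k)$ is spanned by at most that many decorated sheets), or alternatively the passage through matrix factorizations as in \cite{W13, RW16}. Otherwise, simply citing the reference as the paper does is the appropriate move.
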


\begin{corollary}
    \label{lem:tensor_end_ring}
    Let $\mathbf{a} = (a_1, \ldots , a_l) \in \defNFoam$. Then the natural algebra homomorphism
    \[\bigotimes_{i = 1}^{l} H_{a_i}^{\Sigma}\to\End_{\NFoam_+^{\Sigma}} (\id_\mathbf{a}) \]
    is an isomorphism. 
\end{corollary}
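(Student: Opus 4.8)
The statement to prove is that for an object $\mathbf{a} = (a_1, \ldots, a_l)$ in $\defNFoam$, the natural algebra homomorphism $\bigotimes_{i=1}^l H_{a_i}^\Sigma \to \End_{\NFoam_+^\Sigma}(\id_{\mathbf a})$ is an isomorphism. The plan is to reduce this to the single-edge case handled by \Cref{lem:End-of-edge}. First I would identify the source of the natural algebra map: the identity web $\id_{\mathbf a}$ is, by the definition of the monoidal structure in \Cref{prop:monoidal_foams}, the horizontal juxtaposition of the identity edges $\id_{a_1}, \dots, \id_{a_l}$, and decorating each strand $i$ by a symmetric polynomial in $|\X_i| = a_i$ variables, disjointly across strands, gives a ring homomorphism $\bigotimes_i \Sym_\C(\X_i) \to \End(\id_{\mathbf a})$ that factors through $\bigotimes_i H_{a_i}^\Sigma$ by \Cref{lem:End-of-edge} applied strandwise.

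The key step is to show this map is bijective. For this I would invoke the translation of \Cref{prop:foambsbim}: the equivalence $\Foam_+ \xrightarrow{\simeq} \msbsbim$ sends the object $\mathbf a = (a_1,\dots,a_l)$ (a composition of $n = \sum_i a_i$) to the partially symmetric polynomial ring $R_n^{\mathbf a} := \C[x_1,\dots,x_n]^{S_{a_1}\times\cdots\times S_{a_l}}$, and sends $\id_{\mathbf a}$ to the identity bimodule on this ring. Hence $\End_{\Foam_+}(\id_{\mathbf a}) \cong \End_{R_n^{\mathbf a}\text{-bimod}}(R_n^{\mathbf a}) \cong R_n^{\mathbf a}$, the last isomorphism because the center of a commutative ring, viewed as an endomorphism ring of the rank-one free bimodule, is the ring itself; and $R_n^{\mathbf a} \cong \bigotimes_i \Sym_\C(\X_i)$ since being invariant under the product of symmetric groups is the same as being separately symmetric in each block of variables. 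Passing to the $\glN$-quotient and then the $\Sigma$-deformation, the endomorphism ring of $\id_{\mathbf a}$ in $\NFoam_+^\Sigma$ is obtained by imposing the corresponding relations; since these relations are imposed strand-by-strand on $1$-labeled facets and \Cref{lem:End-of-edge} already records the outcome on a single strand as $H_{a_i}^\Sigma$, the quotient of $\bigotimes_i \Sym_\C(\X_i)$ by the combined ideal is precisely $\bigotimes_i H_{a_i}^\Sigma$. (Here one uses that the quotient of a tensor product of algebras by the sum of pullbacks of ideals from the factors is the tensor product of the quotients, which holds since we work over a field.)

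Assembling: the natural map $\bigotimes_i H_{a_i}^\Sigma \to \End_{\NFoam_+^\Sigma}(\id_{\mathbf a})$ fits into a commuting square with the surjection $\bigotimes_i \Sym_\C(\X_i) \twoheadrightarrow \End_{\NFoam_+^\Sigma}(\id_{\mathbf a})$ (the latter is surjective because every decoration of a $k$-labeled identity facet is, up to foam relations, a symmetric polynomial — this is the content of \Cref{lem:End-of-edge} for each $a_i$ together with the fact that the only $2$-morphisms on an identity web are facet decorations) along the quotient map $\bigotimes_i \Sym_\C(\X_i) \twoheadrightarrow \bigotimes_i H_{a_i}^\Sigma$. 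Surjectivity of the claimed map follows immediately. For injectivity, it suffices to check that the kernel of $\bigotimes_i \Sym_\C(\X_i) \to \End_{\NFoam_+^\Sigma}(\id_{\mathbf a})$ is exactly $\sum_i (\text{ideal defining } H_{a_i}^\Sigma)$; this is where the main obstacle lies, since one must verify that no "mixed" relations among decorations on different strands are forced by the foam relations beyond those already present on single strands. The cleanest way is again via \Cref{prop:foambsbim}: the deformed foam relations correspond, under the equivalence, exactly to imposing $P_\Sigma(X_j) = 0$ for each variable $X_j$ on a $1$-labeled facet, so the defining ideal of $\End(\id_{\mathbf a})$ in the undeformed (but $N$-truncated) ring is generated by relations local to each strand-block, giving the desired tensor decomposition. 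With that, the square shows the map is an isomorphism.
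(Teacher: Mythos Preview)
Your surjectivity argument is fine: via \Cref{prop:foambsbim} you correctly identify $\End_{\Foam_+}(\id_{\mathbf a})$ with the partially symmetric polynomial ring $\bigotimes_i \Sym_\C(\X_i)$, and since $\NFoam_+^\Sigma$ is a quotient $2$-category the induced map onto $\End_{\NFoam_+^\Sigma}(\id_{\mathbf a})$ is surjective.

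The gap is in the injectivity step. You correctly flag the obstacle---one must rule out relations that mix different strands---but your proposed resolution via \Cref{prop:foambsbim} does not actually address it. That proposition is an equivalence $\Foam_+\simeq\msbsbim$, i.e.\ for the category \emph{before} the $N$-truncation and the $\Sigma$-deformation; it says nothing about the ideal of $2$-morphisms one quotients by to reach $\NFoam_+^\Sigma$ beyond its generators. Your sentence ``the defining ideal of $\End(\id_{\mathbf a})$ in the undeformed (but $N$-truncated) ring is generated by relations local to each strand-block'' is exactly the statement to be proved: the ideal is generated by local relations on $1$-labeled facets, but its intersection with $\End_{\Foam_+}(\id_{\mathbf a})$ is only computed after closing under all foam compositions, including foams that leave the identity web, pass through webs with thicker or differently connected edges, and return. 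There is no a priori reason this closure stays strand-local, and your argument supplies none.

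The paper avoids this by working with a $2$-functor \emph{out of} the deformed category rather than into the undeformed one. By \cite[Theorem~4.29]{RW16} there is a $2$-functor from $\NFoam_+^\Sigma$ to matrix factorizations, and \cite[(4-19)]{RW16} computes the endomorphism ring of the image of $\id_{\mathbf a}$ to be precisely $\bigotimes_i H_{a_i}^\Sigma$. This furnishes a lower bound on the size of $\End_{\NFoam_+^\Sigma}(\id_{\mathbf a})$ matching your upper bound, which closes the gap. Your bimodule route has no analogue of such a functor for $\NFoam_+^\Sigma$, so it cannot establish injectivity on its own.
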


\begin{proof}
    By \cite[Theorem 4.29]{RW16}, every web in  $\NFoam_+$ and $\defNFoam$ corresponds to a matrix factorization as in \cite{W13}. The lemma then follows from
    \cite[(4-19)]{RW16} and  \Cref{lem:End-of-edge}.
\end{proof}

\begin{corollary}[{\cite[Corollary 3.17]{RW16}}]
\label{cor:orth_idemp}
There exists a complete collection of central orthogonal idempotents $\idem_A\in H_k^\Sigma$, which are indexed by multisubsets $A\subset \Sigma$ and project onto the direct summands of $H_k^\Sigma$. Any $k$-labeled facet of a foam in $\NFoam_+^{\Sigma}$ can be decorated by $\idem_A$. In particular, the identity foam facet can be expressed as a sum of $\idem_A$-decorated foams. Graphically, \[ \begin{tikzpicture}[anchorbase,line cap = round, scale=.5]
    \draw[very thick] (0,0) to (0,2);
    \draw[very thick] (0,2) to (2,2);
    \draw[very thick] (2,2) to (2,0);
    \draw[very thick] (2,0) to (0,0);
    \node at (0,0)[ label=left:\tiny{$k$}](){};
    \node at (0,2)[label=left:\tiny{$k$}](){};
\end{tikzpicture} = \sum_{\substack{A\in \powerset(\Sigma)\\ |A|=k }} \begin{tikzpicture}[anchorbase,line cap = round, scale=.5]
    \draw[very thick] (0,0) to (0,2);
    \draw[very thick] (0,2) to (2,2);
    \draw[very thick] (2,2) to (2,0);
    \draw[very thick] (2,0) to (0,0);
    \node at (0,0)[ label=left:\tiny{$k$}](){};
    \node at (0,2)[label=left:\tiny{$k$}](){};
    \node at (1,1){\tiny{$\idem_A$}}; 
\end{tikzpicture} \]  
\end{corollary}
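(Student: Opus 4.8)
The plan is to read off the idempotents $\idem_A$ from the algebra decomposition of \Cref{thm:directsum} and then transport them into the foam calculus via \Cref{lem:End-of-edge}. Concretely, \Cref{thm:directsum} exhibits $H_k^{\Sigma}$ as a direct sum $\bigoplus_{|A|=k} H_A^{\Sigma}$ \emph{of $\C$-algebras}, i.e.\ as a finite product of unital commutative rings. Any such product decomposition $R\cong\prod_j R_j$ carries a canonical complete system of orthogonal idempotents, namely the units $1_{R_j}$ regarded as elements of $R$, with multiplication by $1_{R_j}$ being the projection onto the $j$-th factor; centrality is automatic since $H_k^{\Sigma}$ is commutative. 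Setting $\idem_A\in H_k^{\Sigma}$ to be the unit of the factor $H_A^{\Sigma}$ thus yields elements with $\idem_A\idem_B=\delta_{A,B}\,\idem_A$ and $\sum_{|A|=k}\idem_A=1$, such that $\idem_A\,H_k^{\Sigma}=H_A^{\Sigma}$ under \eqref{eq:Hsumdecomp}. This settles the purely algebraic half of the statement.

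Next I would promote these elements to foam decorations. By \Cref{lem:End-of-edge}, decorating the $k$-labeled identity foam by symmetric polynomials in $|\X|=k$ variables induces an algebra isomorphism $H_k^{\Sigma}\xrightarrow{\,\cong\,}\End_{\NFoam_+^{\Sigma}}(\id_k)$; in particular the kernel of the resulting surjection $\Sym_\C(\X)\twoheadrightarrow\End_{\NFoam_+^{\Sigma}}(\id_k)$ is exactly the defining ideal $\langle h_{N-k+i}(\X-\Sigma)\mid i>0\rangle$ of $H_k^{\Sigma}$. Choosing any lift of $\idem_A$ to $\Sym_\C(\X)$ and decorating with it therefore produces a $2$-endomorphism of $\id_k$ independent of the lift, which we again denote $\idem_A$. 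More generally, every $k$-labeled facet of a foam has a neighbourhood on which the foam restricts to the $k$-labeled identity foam decorated by a symmetric polynomial, so a polynomial decoration interacts only with its own facet; this is the standard locality of decorations in the foam calculus of \cite{QR16}, already implicit in \Cref{lem:End-of-edge}. Consequently the decoration of an arbitrary $k$-labeled facet by $\idem_A$ is well defined.

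Finally, the undecorated $k$-labeled identity foam corresponds to $1\in H_k^{\Sigma}$ under \Cref{lem:End-of-edge}, so the relation $1=\sum_{|A|=k}\idem_A$ translates into the equality of this identity foam with the sum of its $\idem_A$-decorated copies; by the locality just noted, the same identity may be inserted along any $k$-labeled facet of any foam, which is the displayed graphical equation. The only point that genuinely requires care is this locality of facet decorations, but it is intrinsic to the definition of $\NFoam_+$ and its deformed quotient --- and is precisely what allows \Cref{lem:End-of-edge} to be applied to facets sitting inside larger foams --- so beyond \Cref{thm:directsum} and \Cref{lem:End-of-edge} no further work is needed.
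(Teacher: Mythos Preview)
Your proposal is correct and follows exactly the route the paper intends: the corollary is stated without proof, simply as a consequence of \Cref{thm:directsum} together with \Cref{lem:End-of-edge} (and the external citation to \cite{RW16}), and your argument---extracting the units of the algebra factors in \eqref{eq:Hsumdecomp} as a complete orthogonal family and then transporting them to facet decorations via the isomorphism of \Cref{lem:End-of-edge}---is precisely the expected unpacking. There is nothing to add or change.
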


\begin{definition}
Let $F$ be a foam in $\NFoam_+^{\Sigma}$. A \emph{colouring} of $F$ is a choice of idempotent $\idem_{A_f}$ as in \Cref{cor:orth_idemp} for every facet $f$ of $F$. Placing these extra decorations on all facets, in addition to possibly pre-existing decorations on the facets, results in the corresponding \emph{coloured foam}.

A colouring is called \emph{admissible} if the following flow condition is satisfied for every seam in $F$: Let $A$ (red) and $B$ (blue) denote the multisubsets colouring the two adjacent facets of non-maximal label and $C$ (green) the multisubset colouring the adjacent facet with maximal label, then $A \uplus B = C$.
    \[
    \begin{tikzpicture} [scale=.5,fill opacity=0.2]
    \path[fill=cl_green] (2.25,3) to (.75,3) to (.75,0) to (2.25,0);
    \path[fill=cl_dark_blue] (.75,3) to [out=225,in=0] (-.5,2.5) to (-.5,-.5) to [out=0,in=225] (.75,0);
    \path[fill=cl_red] (.75,3) to [out=135,in=0] (-1,3.5) to (-1,.5) to [out=0,in=135] (.75,0);	
	\draw [very thick, line cap = round] (2.25,0) to (.75,0);
	\draw [very thick, line cap = round] (.75,0) to [out=135,in=0] (-1,.5);
	\draw [very thick,line cap = round] (.75,0) to [out=225,in=0] (-.5,-.5);
	\draw[very thick, line cap = round, dashed] (.75,0) to (.75,3);
	\draw [very thick] (2.25,3) to (2.25,0);
	\draw [very thick] (-1,3.5) to (-1,.5);
	\draw [very thick] (-.5,2.5) to (-.5,-.5);
	\draw [very thick, line cap = round] (2.25,3) to (.75,3);
	\draw [very thick, line cap = round] (.75,3) to [out=135,in=0] (-1,3.5);
	\draw [very thick, line cap = round] (.75,3) to [out=225,in=0] (-.5,2.5);
	\node [black, opacity=1]  at (1.5,2.5){};
	\node[black, opacity=1] at (-.55,3.15) {};
	\node[black, opacity=1] at (-.05,2.15) {};
    \end{tikzpicture}
    \]
\end{definition}

By \Cref{cor:orth_idemp}, every foam in $\NFoam_+^{\Sigma}$ decomposes as a sum of all its possible colourings. It follows from \cite[Proposition 3.18]{RW16} that the coloured foam represents the zero 2-morphism in $\NFoam_+^{\Sigma}$ if the flow condition is violated at some seam, i.e. the colouring there satisfies $A \uplus B \neq C$. As a consequence, we obtain:

\begin{lemma}
    Let $F$ be a foam in $\NFoam_+^{\Sigma}$. Then, $F$ decomposes as a sum of all its admissibly coloured versions. 
\end{lemma}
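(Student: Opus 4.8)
The plan is to apply \Cref{cor:orth_idemp} to every facet of $F$ in turn and then delete the summands that vanish by the flow condition.

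First I would expand $F$ over all colourings. Label the facets of $F$ by $f_1,\dots,f_m$, and let $k_j\le N$ denote the label of $f_j$. Using \Cref{cor:orth_idemp}, I would replace the facet $f_1$ by the sum $\sum_{|A|=k_1}\idem_A$ of its $\idem_A$-decorated versions, the sum ranging over multisubsets $A\subseteq\Sigma$ with $|A|=k_1$; this rewrites $F$ as a finite sum of foams each carrying a fixed idempotent $\idem_{A_1}$ on $f_1$. Applying the same substitution on $f_2$ inside each summand, then on $f_3$, and so on for all $m$ facets, one arrives at
\[
F \;=\; \sum_{(A_1,\dots,A_m)} F_{(A_1,\dots,A_m)},
\]
the sum ranging over all tuples $(A_1,\dots,A_m)$ with $A_j\in\powerset(\Sigma)$ and $|A_j|=k_j$ --- that is, over all colourings of $F$ --- where $F_{(A_1,\dots,A_m)}$ is the correspondingly coloured foam. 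This sum is finite because $F$ has finitely many facets and $\powerset(\Sigma)$ is finite, and each substitution is legitimate because decorating a single facet by an element of $\Sym_\C(\X)$ with $|\X|=k_j$ is a local operation: the identity of \Cref{cor:orth_idemp} on $f_j$ holds irrespective of the decorations already present on the other facets and of the combinatorics of $F$ near $f_j$.

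Then I would discard the non-admissible terms. As recalled just before the statement, \cite[Proposition 3.18]{RW16} shows that a coloured foam in which the flow condition $A\uplus B=C$ fails at some seam represents the zero $2$-morphism in $\NFoam_+^{\Sigma}$. Deleting these vanishing summands from the expansion above leaves exactly the admissibly coloured versions of $F$, which is the asserted decomposition. The content of the proof is thus carried entirely by \Cref{cor:orth_idemp} and by \cite[Proposition 3.18]{RW16}; the only point that needs a moment's care is the facet-by-facet globalisation in the first step, and this is immediate once one notes that facet decorations act locally and commute, so the single-facet identities may be applied one after another. I therefore do not expect any genuine obstacle.
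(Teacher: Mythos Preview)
Your argument is correct and matches the paper's proof exactly: expand over all colourings facet-by-facet via \Cref{cor:orth_idemp}, then discard the non-admissible ones using \cite[Proposition 3.18]{RW16}. The paper states this in a single sentence immediately before the lemma, whereas you have made the facet-by-facet globalisation explicit, but there is no difference in content.
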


\begin{conv}[Graphical language in the 1-categorical idempotent completion]
    \label{conv:col_web}
Consider a $k$-labeled web edge in $\defNFoam$, e.g. the identity 1-morphism on the object $(k)$. Then any idempotent $\idem_A$ from \Cref{cor:orth_idemp} defines an idempotent 2-morphism $F_A$ on the edge, here represented by a (green) coloured sheet. Inspired by \cite{BNM06} we use a (green) coloured web edge as diagrammatic representative for the associated 1-morphism in the local impotent completion $\kdefFoam$:

\[
\begin{tikzpicture}[line cap = round, scale=.5, anchorbase]
    \draw[very thick, cl_green] (0,0) to (2,0);
    \node at (0,0)[opacity=1,circle,fill, inner sep=1pt](){};
    \node at (2,0)[opacity=1,circle,fill, inner sep=1pt](){};
\end{tikzpicture} 
\;\;:=\;\;
\left ( \begin{tikzpicture}[line cap = round, scale=.5]
    \draw[very thick] (0,0) to (2,0);
    \node at (0,0)[opacity=1,circle,fill, inner sep=1pt](){};
    \node at (2,0)[opacity=1,circle,fill, inner sep=1pt](){};
\end{tikzpicture} , \;\begin{tikzpicture}[anchorbase,line cap = round, scale=.5]
	\path[fill=cl_green, opacity = 0.2] (0,0) to (0,2) to (2,2) to (2,0);
    \draw[very thick] (0,0) to (0,2);
    \draw[very thick] (0,2) to (2,2);
    \draw[very thick] (2,2) to (2,0);
    \draw[very thick] (2,0) to (0,0);
    \node at (0,0)[opacity=1,circle,fill, inner sep=1pt](){};
    \node at (0,2)[opacity=1,circle,fill, inner sep=1pt](){};
    \node at (2,2)[opacity=1,circle,fill, inner sep=1pt](){};
    \node at (2,0)[opacity=1,circle,fill, inner sep=1pt](){};
\end{tikzpicture} \right ) 
\;\;=\;\;
(\id_k, F_A) 
\quad 
\text{where} 
\quad 
F_A =  
\begin{tikzpicture}[anchorbase,line cap = round, scale=.5]
	\path[fill=cl_green, opacity = 0.2] (0,0) to (0,2) to (2,2) to (2,0);
    \draw[very thick] (0,0) to (0,2);
    \draw[very thick] (0,2) to (2,2);
    \draw[very thick] (2,2) to (2,0);
    \draw[very thick] (2,0) to (0,0);
    \node at (0,0)[opacity=1,circle,fill, inner sep=1pt](){};
    \node at (0,2)[opacity=1,circle,fill, inner sep=1pt](){};
    \node at (2,2)[opacity=1,circle,fill, inner sep=1pt](){};
    \node at (2,0)[opacity=1,circle,fill, inner sep=1pt](){};
\end{tikzpicture} 
:=
\begin{tikzpicture}[anchorbase,line cap = round, scale=.5]
	\path (0,0) to (0,2) to (2,2) to (2,0);
    \draw[very thick] (0,0) to (0,2);
    \draw[very thick] (0,2) to (2,2);
    \draw[very thick] (2,2) to (2,0);
    \draw[very thick] (2,0) to (0,0);
    \node at (0,0)[opacity=1,circle,fill, inner sep=1pt](){};
    \node at (0,2)[opacity=1,circle,fill, inner sep=1pt](){};
    \node at (2,2)[opacity=1,circle,fill, inner sep=1pt](){};
    \node at (2,0)[opacity=1,circle,fill, inner sep=1pt](){};
     \node at (1,1){\tiny{$\idem_A$}}; 
\end{tikzpicture} 
 \]
In this diagrammatic language we colour the endpoints of the edge black to emphasize that the source and target of the resulting 1-morphism are still $(k)$.

More generally, given a web $W$ representing a 1-morphism in $\defNFoam$, we can choose one indecomposable idempotent for each edge and use it to decorate the identity foam on $W$ to obtain an idempotent 2-morphism on $W$. We use colours to encode these idempotent labels on foam facets and, by extension, to the the edges of $W$ to describe the corresponding 1-morphisms in $\kdefFoam$. For instance:
\begin{equation}
    \label{eq:trivalentcolored}
    \begin{tikzpicture}[anchorbase, line cap = round, scale=.4]
		\draw [very thick, cl_green] (2.25,0) to (.75,0);
		\draw [very thick, cl_red] (.75,0) to [out=135,in=0] (-1,.75);
		\draw [very thick, cl_dark_blue] (.75,0) to [out=225,in=0] (-1,-.75);
        \node at (2.25,0)[circle,fill,inner sep=1pt](k){};
        \node at (-1,.75)[circle,fill,inner sep=1pt](l){};
        \node at (-1,-.75)[circle,fill,inner sep=1pt](m){};
	\end{tikzpicture}
\;\;:=\;\;
    \left( 
    \begin{tikzpicture}[anchorbase, line cap = round, scale=.4]
		\draw [very thick] (2.25,0) to (.75,0);
		\draw [very thick] (.75,0) to [out=135,in=0] (-1,.75);
		\draw [very thick] (.75,0) to [out=225,in=0] (-1,-.75);
        \node at (2.25,0)[circle,fill,inner sep=1pt](k){};
        \node at (-1,.75)[circle,fill,inner sep=1pt](l){};
        \node at (-1,-.75)[circle,fill,inner sep=1pt](m){};
	\end{tikzpicture} ,  \;
    \begin{tikzpicture} [anchorbase, scale=.5]
	\path[fill=cl_green, ,fill opacity=0.2] (2.25,3) to (.75,3) to (.75,0) to (2.25,0);\path[fill=cl_dark_blue, ,fill opacity=0.2] (.75,3) to [out=225,in=0] (-.5,2.5) to (-.5,-.5) to [out=0,in=225] (.75,0);
	\path[fill=cl_red, ,fill opacity=0.2] (.75,3) to [out=135,in=0] (-1,3.5) to (-1,.5) to [out=0,in=135] (.75,0);	
	\draw [very thick, line cap = round] (2.25,0) to (.75,0);
	\draw [very thick, line cap = round] (.75,0) to [out=135,in=0] (-1,.5);
	\draw [very thick, line cap = round] (.75,0) to [out=225,in=0] (-.5,-.5);
    \node at (2.25,0)[circle,fill, fill opacity = 1,inner sep=1pt](k){};
    \node at (-1,.5)[circle,fill, fill opacity = 1,inner sep=1pt](l){};
    \node at (-.5,-.5)[circle,fill, fill opacity = 1, inner sep=1pt](m){};
	\draw[very thick, line cap = round, dashed] (.75,0) to (.75,3);
	\draw [very thick] (2.25,3) to (2.25,0);
	\draw [very thick] (-1,3.5) to (-1,.5);
	\draw [very thick] (-.5,2.5) to (-.5,-.5);
	\draw [very thick, line cap = round] (2.25,3) to (.75,3);
	\draw [very thick, line cap = round] (.75,3) to [out=135,in=0] (-1,3.5);
	\draw [very thick, line cap = round] (.75,3) to [out=225,in=0] (-.5,2.5);
    \node at (2.25,3)[circle,fill, fill opacity = 1,inner sep=1pt](k){};
    \node at (-1,3.5)[circle,fill, fill opacity = 1,inner sep=1pt](l){};
    \node at (-.5,2.5)[circle,fill, fill opacity = 1, inner sep=1pt](m){};
    \end{tikzpicture}
    \right) 
    \end{equation}
\end{conv}

\begin{proposition}[{\cite[Lemma 4.4, Lemma 4.6]{RW16}}]
    \label{prop:inv_foams}
    Let $A$ and $B$ be two disjoint multisubsets of $\Sigma$ such that $|A| = a$ and $|B| = b$ for $a, b \in \N_0$. Then, the following admissibly coloured foams are invertible as 2-morphisms in $\kdefFoam$, where the red facet is coloured by $\idem_A$, the blue by $\idem_B$ and the green by $\idem_{A \uplus B}$.
\[
    \begin{tikzpicture}[anchorbase, line cap = round, scale=.5,fill opacity=0.2]
	\path[fill=cl_dark_blue]  (2,3) to [out=180,in=45] (1,2.5) to [out=270,in=0] (0.25,1.75) to [out=180,in=270] (-0.5,2.5) to [out=135,in=0] (-2,3) to (-2,1) to (2,1) to (2,3);
	\path[fill=cl_red]  (2.5,2) to [out=180, in=315] (1,2.5)to [out=270,in=0] (0.25,1.75) to [out=180,in=270] (-0.5,2.5) to [out=225, in=0] (-1.5,2) to (-1.5,0) to (2.5,0) to (2.5,2);
	\path[fill=cl_green] (1,2.5) to [out=270, in=0] (0.25,1.75) to [out=180, in=270] (-0.5,2.5);
	\draw[very thick] (2,1) to [out=180,in=0] (-2,1);
	\draw[very thick] (2.5,0) to [out=180,in=0] (-1.5,0);
	\draw[very thick] (2,1) to (2,3);
	\draw[very thick] (2.5,0) to (2.5,2);
	\draw[very thick] (-1.5,0) to (-1.5,2);
	\draw[very thick] (-2,1) to (-2,3);	
	\draw[very thick] (2,3) to [out=180,in=45] (1,2.5);
	\draw[very thick] (2.5,2) to [out=180,in=315] (1,2.5);
	\draw[very thick] (1,2.5) to (-.5,2.5);
	\draw[very thick] (-.5,2.5) to [out=225,in=0] (-1.5,2);
	\draw[very thick] (-.5,2.5) to [out=135,in=0] (-2,3);
	\draw[very thick, black, dashed] (1,2.5) to [out=270,in=0]  (0.25,1.75) to [out=180, in = 270] (-0.5,2.5);
	\node[cl_dark_blue,opacity=1] at (1.75,2.55) {\tiny $b$};
	\node[cl_red,opacity=1] at (2.25,1.75) {\tiny $a$};
	\node[cl_green, opacity=1] at (.25,2.15) {\tiny $_{a+b}$};

    \end{tikzpicture},  \qquad
    \begin{tikzpicture} [anchorbase, line cap = round, scale=.5,fill opacity=0.2]
	\path[fill=cl_red] (2.5,0) to [out=180,in=315] (1,0.5) to [out=90,in=0]  (0.25,1.25) to [out=180, in=90] (-0.5,0.5) to [out=225, in=0] (-1.5,0) to (-1.5,2) to (2.5,2) to (2.5,0);
	\path[fill=cl_dark_blue] (2,1) to [out=180,in=45] (1,0.5) to [out=90,in=0]  (0.25,1.25) to [out=180, in=90] (-0.5,0.5) to [out=135, in=0] (-2,1) to (-2,3) to (2,3) to (2,1);
	\path[fill=cl_green]  (1,0.5) to [out=90,in=0]  (0.25,1.25) to [out=180, in=90] (-0.5,0.5) to (1,0.5);
	\draw[very thick] (2,1) to [out=180,in=45] (1,.5);
	\draw[very thick] (2.5,0) to [out=180,in=315] (1,.5);
	\draw[very thick] (1,.5) to (-.5,.5);
	\draw[very thick] (-.5,.5) to [out=225,in=0] (-1.5,0);
	\draw[very thick] (-.5,.5) to [out=135,in=0] (-2,1);
	\draw[very thick, black, dashed] (-.5,.5) to [out=90,in=180] (.25,1.25) to [out=0, in=90] (1,.5);
	\draw[very thick] (2,1) to (2,3);
	\draw[very thick] (2.5,0) to (2.5,2);
	\draw[very thick] (-1.5,0) to (-1.5,2);
	\draw[very thick] (-2,1) to (-2,3);
	\draw[very thick] (2,3) to (-2,3);
	\draw[very thick] (2.5,2) to  (-1.5,2);
	\node[cl_green, opacity=1] at (.25,.75) {\tiny $_{a+b}$};
	\node[cl_dark_blue,opacity=1] at (1.75,2.65) {\tiny $b$};
	\node[cl_red,opacity=1] at (2.25,1.75) {\tiny $a$};
    \end{tikzpicture}, \qquad
    \begin{tikzpicture} [anchorbase,line cap = round, scale=.5,fill opacity=0.2]
    \path[fill=cl_dark_blue]  (1,2.5) to [out=135,in=0] (0,3) to [out=180, in=45] (-1,2.5) to [out=270,in=180] (0,1) to [out=0,in=270] (1,2.5) ;
    \path[fill=cl_red]  (1,2.5) to [out=225,in=0] (0,2) to [out=180, in=315] (-1,2.5) to [out=270,in=180] (0,1) to [out=0,in=270] (1,2.5) ;
    \path[fill=cl_green]   (-1,2.5) to [out=270,in=180] (0,1) to [out=0,in=270] (1,2.5) to (2, 2.5) to (2,0.5) to (-2 , 0.5) to (-2, 2.5) to (-1,2.5);
	\draw[very thick] (2,0.5) to (-2,0.5);
	\draw[very thick, black, dashed] (-1,2.5) to [out=270,in=180] (0,1) to [out=0,in=270] (1,2.5);
	\draw[very thick] (2,0.5) to (2,2.5);
	\draw[very thick] (-2,0.5) to (-2,2.5);
    \draw[very thick] (2,2.5) to (1,2.5);
	\draw[very thick] (-1,2.5) to  (-2,2.5);
	\draw[very thick] (1,2.5) to [out=135,in=0] (0,3) to [out=180, in=45] (-1,2.5);
	\draw[very thick] (1,2.5) to [out=225,in=0] (0,2) to [out=180,in=315] (-1,2.5);
	\node[cl_dark_blue,opacity=1] at (0.45,2.45) {\tiny $b$};
	\node[cl_green, opacity=1] at (1.5,2.1) {\tiny $_{a+b}$};
	\node[cl_red,opacity=1] at (-0.45,1.45) {\tiny $a$};

    \end{tikzpicture}, \qquad
    \begin{tikzpicture} [anchorbase,line cap = round, scale=.5,fill opacity=0.2]
    \path[fill=cl_dark_blue]  (-1,.5) to [out=90,in=180] (0,2) to [out=0, in=90] (1,.5) to [out=135,in=0] (0,1) to [out=180,in=45] (-1,.5) ;
    \path[fill=cl_red]  (1,0.5) to [out=225,in=0] (0,0) to [out=180, in=315] (-1,.5) to [out=90,in=180] (0,2) to [out=0,in=90] (1,.5);
	\path[fill=cl_green]  (2,0.5) to (2,2.5) to (-2, 2.5) to (-2,0.5) to (-1,0.5) to [out=90,in=180] (0,2) to [out=0,in=90] (1,0.5) to (2,0.5);
	\draw[very thick] (2,0.5) to (1,0.5);
	\draw[very thick] (-1,0.5) to  (-2,0.5);
	\draw[very thick] (1,0.5) to [out=135,in=0] (0,1) to [out=180, in=45] (-1,0.5);
	\draw[very thick] (1,0.5) to [out=225,in=0] (0,0) to [out=180,in=315] (-1,0.5);
	\draw[very thick, black, dashed] (1,.5) to [out=90,in=0] (0,2) to [out=180,in=90] (-1,.5);
	\draw[very thick] (2,0.5) to (2,2.5);
	\draw[very thick] (-2,0.5) to (-2,2.5);
		\draw[very thick] (2,2.5) to (-2,2.5);
	\node[blue,opacity=1] at (0.45,1.45) {\tiny $b$};
	\node[cl_green, opacity=1] at (1.5,2.2) {\tiny $_{a+b}$};
	\node[cl_red,opacity=1] at (-0.45,0.35) {\tiny $a$};
    \end{tikzpicture}
    \]

\end{proposition}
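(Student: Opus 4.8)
The plan is to re-establish the statement along the lines of \cite[\S4]{RW16}, making the role of the idempotent decorations explicit. The four displayed foams are the units and counits of the two ambidextrous adjunctions between a merge web and the corresponding split web recalled in \Cref{sec:foams}; rotating foams along these adjunctions interchanges them while preserving invertibility, so it suffices to treat the coloured zip foam $\zeta$ together with its candidate inverse, the coloured unzip foam $\xi$ (the third and fourth foams are then covered by the same argument with the $(a+b)$-labeled edge replaced by the $(a,b)$-labeled double edge).

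I would use two standard relations of $\NFoam_+$ (see \cite[Definition~3.1]{QR16}), valid also in $\addNFoam_+$ and $\defNFoam$. First, one of the two composites $\zeta\circ\xi$, $\xi\circ\zeta$ is isotopic to an identity foam on a merge--split web; decorating the two thin facets of that web by $\idem_A$ (red) and $\idem_B$ (blue) --- an admissible colouring, being the tautology $A\uplus B=A\uplus B$ --- shows that this composite is an identity $2$-morphism in $\kdefFoam$. Hence $\zeta$ has a one-sided inverse. Second, the \emph{digon relation} rewrites the remaining composite as a $\C$-linear combination, indexed by Young diagrams $\mu$ inside an $a\times b$ box, of identity foams on the $(a+b)$-labeled edge decorated by $s_\mu$ in the $a$-facet alphabet and by the complementary Schur polynomial in the $b$-facet alphabet.

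To analyse this remaining composite, decorate the $(a+b)$-labeled edge by $\idem_{A\uplus B}$ and apply \Cref{lem:End-of-edge}, viewing it as an element of the summand $\idem_{A\uplus B}\cdot H_{a+b}^\Sigma=H^\Sigma_{A\uplus B}$. Since $A$ and $B$ are disjoint, \Cref{thm:directsum} yields a $\C$-algebra isomorphism $H^\Sigma_{A\uplus B}\cong H^\Sigma_{A}\otimes_\C H^\Sigma_{B}$: writing $A$ and $B$ through their multiplicities $k_{A,i},k_{B,i}\le N_i$, disjointness forces $k_{A,i}k_{B,i}=0$ for every $i$, whence $H^\Sigma_{A\uplus B}=\bigotimes_i H^{N_i}_{k_{A,i}+k_{B,i}}\cong\bigl(\bigotimes_i H^{N_i}_{k_{A,i}}\bigr)\otimes_\C\bigl(\bigotimes_i H^{N_i}_{k_{B,i}}\bigr)$, with the $a$- and $b$-facet alphabets becoming the two tensor factors. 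Under this identification the digon sum localises to an explicit element of $H^\Sigma_A\otimes_\C H^\Sigma_B$ which is a unit: the terms indexed by $\mu\neq\emptyset$ are controlled by the defining deformation relations of $H^\Sigma_A$ and $H^\Sigma_B$, while the surviving leading contribution is, up to a nonzero scalar, the resultant $\prod_{\lambda\in A,\,\nu\in B}(\lambda-\nu)$, which is nonzero precisely because $A$ and $B$ are disjoint. This is the computation of \cite[Lemmas~4.4 and 4.6]{RW16}, carried out there via the matrix-factorisation model of \cite[Theorem~4.29]{RW16}. Since the remaining composite is invertible, $\zeta$ has a one-sided inverse on the other side as well; possessing both a left and a right inverse, $\zeta$ is an isomorphism in $\kdefFoam$.

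The main obstacle is the unit statement for the localised digon sum: one must verify that restricting to the $\idem_{A\uplus B}$-summand annihilates all $\mu\neq\emptyset$ terms and leaves an invertible leading term. This is a concrete computation in the deformed Grassmannian cohomology rings $H^\Sigma_\bullet$ (equivalently, in their matrix-factorisation avatars), for which the disjointness of $A$ and $B$ is exactly the hypothesis rendering the relevant resultant a unit; I would import it from \cite[\S4]{RW16} rather than reproduce it here.
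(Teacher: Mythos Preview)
Your sketch captures the key invariant---after colouring, the obstruction is the resultant $\prod_{\lambda\in A,\nu\in B}(\lambda-\nu)$, which is a unit exactly when $A$ and $B$ are disjoint---but the structural argument has a genuine error. Neither composite $\zeta\circ\xi$ nor $\xi\circ\zeta$ is isotopic to an identity foam. In the Frobenius-extension model of \Cref{prop:foambsbim} (with the maps of \eqref{eq:frob_ext_maps}), the zip and unzip correspond to the comultiplication $d$ and multiplication $m$ for $R^{(a+b)}\subset R^{(a,b)}$; the composite $m\circ d$ on $\id_{(a,b)}$ is multiplication by $\sum_i c_i d_i=\prod_{i\le a<j\le a+b}(x_i-x_j)$, not the identity, and $d\circ m$ on the merge--split web sends $r\otimes r'\mapsto\sum_i c_i r r'\otimes d_i$, again not the identity. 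So your ``one-sided inverse by isotopy'' step fails, and having shown only that one composite is a unit does not make $\zeta$ invertible.

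There is also a misconception in your description of the digon sum: the terms for $\mu\neq\emptyset$ are not annihilated by the deformation relations. Rather, the entire Schur sum $\sum_\mu s_\mu(\X_a)s_{\hat\mu}(-\X_b)$ equals $\prod_{i,j}(x_i-y_j)$ as a symmetric-function identity, and it is this product that localises under $\idem_A\otimes\idem_B$ to the nonzero resultant. To complete the argument one must handle \emph{both} composites: either compute $\zeta\circ\xi$ directly and show it too acts by a unit, or use that the coloured merge--split web is, by the digon decomposition together with disjointness of $A$ and $B$, isomorphic to a single copy of the coloured identity web, so that a split monomorphism between them is automatically an isomorphism. The paper itself offers no argument and simply imports the result from \cite[Lemmas~4.4, 4.6]{RW16}, where both directions are established simultaneously via the matrix-factorisation model.
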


\subsection{Graphical language for 2-categorical idempotent completion}

Using the 2-categorical idempotent completion, we can also decompose the objects of $\defNFoam$.

\begin{theorem}
\label{thm:object_coloring}
Let $\Sigma$ be as in \Cref{conv:sigma} and $0\leq k \leq N=|\Sigma|$. Then we have the following canonical equivalence in $\kkdefFoam$  
 \begin{equation}\label{eq:objsumdecomp}
     k \simeq \bigboxplus_{\substack{A\in \powerset(\Sigma)\\ |A|=k }} \kartwobj{k}{A}
 \end{equation}
compatible with the inclusions $H_A^\Sigma \hookrightarrow H_k^\Sigma$ from  \eqref{eq:Hsumdecomp} so that the isomorphism from \Cref{lem:End-of-edge} restricts to $\End(\id_{\kartwobj{k}{A}})\cong H_A^\Sigma$ for all $A\in \powerset(\Sigma)$ with $|A|=k$. 
\end{theorem}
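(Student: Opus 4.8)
The plan is to obtain \eqref{eq:objsumdecomp} as a direct application of \Cref{prop:idem_sumtwo}, once the $2$-endomorphism algebra of $\id_k$ has been computed. First I would check that the ambient $2$-category $\kkdefFoam=\kkar\kar\NFoam_+^{\Sigma}$ satisfies the hypotheses of \Cref{prop:idem_sumtwo}: it is locally additive, since $\NFoam_+^{\Sigma}$ is a quotient of the locally additive $\addNFoam_+$ and the two idempotent completions preserve local additivity; and it is $2$-idempotent-complete by \Cref{prop:detectcomplete}(1), as $\kar\NFoam_+^{\Sigma}$ is locally idempotent-complete. Next, since $\kar$ and $\kkar$ leave the endomorphism algebra of the identity $1$-morphism on an object of $\NFoam_+^{\Sigma}$ unchanged (both completions leave the relevant Hom-categories unaffected up to equivalence), \Cref{lem:End-of-edge} gives an algebra isomorphism $\End_{\kkdefFoam}(\id_k)\cong H_k^\Sigma$. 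By \Cref{thm:directsum} (equation \eqref{eq:Hsumdecomp}) this commutative $\C$-algebra is the direct product of the ideals $H_A^\Sigma$ over multisubsets $A\in\powerset(\Sigma)$ with $|A|=k$, with complete system of orthogonal central idempotents $\{\idem_A\}$ from \Cref{cor:orth_idemp}, where $\idem_A$ projects onto $H_A^\Sigma$.

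Feeding this decomposition into \Cref{prop:idem_sumtwo} yields $k\simeq\bigboxplus_{|A|=k}\kartwobj{k}{A}$ in $\kkdefFoam$ together with isomorphisms $\End(\id_{\kartwobj{k}{A}})\cong H_A^\Sigma$. Following the proofs of \Cref{prop:idem_sumtwo} and \Cref{prop:idem_sum}, the object $\kartwobj{k}{A}$ is the $2$-categorical retract of $k$ cut out by the idempotent $\idem_A\in\End(\id_k)$ — equivalently, the splitting inside $\kkar\kar\NFoam_+^{\Sigma}$ of the $2$-categorical idempotent carried by the $\idem_A$-coloured $k$-edge of \Cref{conv:col_web} — with inclusion and projection $1$-morphisms $\iota_A\colon \kartwobj{k}{A}\to k$ and $\rho_A\colon k\to \kartwobj{k}{A}$ satisfying $\rho_A\circ\iota_A\cong\id_{\kartwobj{k}{A}}$ and $\iota_A\circ\rho_A\cong f_A$, where $f_A$ is the image of $\idem_A$ and $\id_k\cong\bigoplus_A f_A$ has the $\idem_A$ as its structure $2$-morphisms.

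Then I would verify the two compatibility claims. For the first: whiskering the identity $2$-morphism on $\id_{\kartwobj{k}{A}}$ along $\rho_A$ and $\iota_A$ produces the identity on $\iota_A\circ\rho_A\cong f_A$, whose corresponding idempotent in $\End(\id_k)\cong H_k^\Sigma$ is exactly $\idem_A$; this is precisely the compatibility of $k\simeq\bigboxplus_A\kartwobj{k}{A}$ with \eqref{eq:Hsumdecomp}. For the second: the isomorphism $\End_{\kkdefFoam}(\id_{\kartwobj{k}{A}})\cong H_A^\Sigma$ coming out of \Cref{prop:idem_sumtwo} is, by its construction, the composite of the whiskering isomorphism $\End(\id_{\kartwobj{k}{A}})\xrightarrow{\ \cong\ }\End(f_A)$ with the identification $\End(f_A)=\idem_A\cdot H_k^\Sigma=H_A^\Sigma$ taking place inside $\End(\id_k)\cong H_k^\Sigma$ (using that $H_k^\Sigma$ is commutative, so $\idem_A\cdot H_k^\Sigma$ is exactly the summand of \eqref{eq:Hsumdecomp}); hence it is nothing but the restriction of the isomorphism of \Cref{lem:End-of-edge} along the summand inclusion $H_A^\Sigma\hookrightarrow H_k^\Sigma$. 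Canonicity of the decomposition then follows because the $\idem_A$ are canonical and determine the retracts $\kartwobj{k}{A}$ up to canonical equivalence.

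The mathematics here is light; I expect the main obstacle to be purely organizational: pinning down that $\kar$ and $\kkar$ really do not disturb $\End(\id_k)$ or the central idempotents $\idem_A$, and matching the \emph{non-unital} ring map $H_A^\Sigma\hookrightarrow H_k^\Sigma$ (``multiplication by $\idem_A$'') with the whiskering map $\psi\mapsto\iota_A\circ\psi\circ\rho_A$ on endomorphism algebras. A related point worth flagging — though it belongs to the subsequent results rather than to \Cref{thm:object_coloring} itself — is that this argument only yields the \emph{existence} of the comparison $1$-morphisms $\iota_A,\rho_A$; realizing them by explicit admissibly coloured foams is where \Cref{prop:inv_foams} enters.
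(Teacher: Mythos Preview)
Your proposal is correct and follows essentially the same route as the paper: both arguments reduce the theorem to a direct application of \Cref{prop:idem_sumtwo}, using \Cref{lem:End-of-edge} and \Cref{thm:directsum} to produce the required algebra decomposition of $\End(\id_k)$. The paper's proof is considerably terser---it simply names the relevant 2-categorical idempotent $(k,(\id_k,F_A),F_A,F_A)$ and invokes \Cref{prop:idem_sumtwo}---whereas you additionally verify the ambient hypotheses and spell out why the resulting isomorphisms $\End(\id_{\kartwobj{k}{A}})\cong H_A^\Sigma$ are restrictions of the map from \Cref{lem:End-of-edge}; these extra checks are sound and fill in details the paper leaves implicit.
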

\begin{proof}
This follows directly from \Cref{prop:idem_sumtwo}. Namely, the 2-categorical idempotent \[(k, (\id_k, F_A), F_A, F_A),\] where $F_A$ denotes the identity foam coloured by the idempotent $\idem_A$ on a thickness $k$ web, splits via a manifestly split 2-categorical idempotent $(k, \kartwobj{k}{A}, f, g, \phi, \gamma)$. 
\end{proof}

\begin{corollary}
    Let $\mathbf{a} = (a_1 , \ldots , a_l)$ be a sequence of uncoloured points and $B_i \subset \Sigma$ multisubsets such that $|B_i| = a_i$ for $1 \leq i \leq l$. Then the sequence $\mathbf{a}$ splits via a sequence of coloured points $(\kartwobj{a_1}{B_1} , \ldots , \kartwobj{a_l}{B_l})$.
\end{corollary}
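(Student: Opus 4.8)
The plan is to deduce the statement from \Cref{thm:object_coloring} together with the fact that $2$-categorical retracts are closed under the semistrict monoidal product $\boxtimes$ of \Cref{prop:monoidal_foams}. The starting observation is that $\mathbf{a}=(a_1,\dots,a_l)$ is literally the monoidal product $(a_1)\boxtimes\cdots\boxtimes(a_l)$ of one-term sequences, so it suffices to show that $\kartwobj{a_1}{B_1}\boxtimes\cdots\boxtimes\kartwobj{a_l}{B_l}$ is a $2$-categorical retract of this product, with the expected endomorphism ring.

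First I would unpack the proof of \Cref{thm:object_coloring}: for each $i$ it produces a manifestly split $2$-categorical idempotent $(a_i,\kartwobj{a_i}{B_i},f_i,g_i,\phi_i,\gamma_i)$ (in the sense of \Cref{def:manifestlysplit}) splitting the $2$-categorical idempotent $(a_i,(\id_{a_i},F_{B_i}),F_{B_i},F_{B_i})$, where $F_{B_i}$ is the identity foam on $\id_{(a_i)}$ coloured by $\idem_{B_i}$; in particular $\End(\id_{\kartwobj{a_i}{B_i}})\cong H_{B_i}^\Sigma$. Next I would form the $\boxtimes$-product of these data. Using the interchanger $2$-isomorphisms of \Cref{prop:monoidal_foams} there is a $2$-isomorphism $(f_1\boxtimes\cdots\boxtimes f_l)\circ(g_1\boxtimes\cdots\boxtimes g_l)\cong(f_1\circ g_1)\boxtimes\cdots\boxtimes(f_l\circ g_l)$; post-composing it with $\phi_1\boxtimes\cdots\boxtimes\phi_l$ and, dually, pre-composing with $\gamma_1\boxtimes\cdots\boxtimes\gamma_l$, produces $2$-morphisms $\phi$ and $\gamma$ between $(f_1\boxtimes\cdots\boxtimes f_l)\circ(g_1\boxtimes\cdots\boxtimes g_l)$ and $\id_{\mathbf a}$. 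By functoriality of $\boxtimes$ and coherence of the semistrict monoidal structure, the identity $\phi\cdot\gamma=\id_{\id_{\mathbf a}}$ reduces to the $l$ identities $\phi_i\cdot\gamma_i=\id_{\id_{(a_i)}}$, so $(\mathbf a,\kartwobj{a_1}{B_1}\boxtimes\cdots\boxtimes\kartwobj{a_l}{B_l},f_1\boxtimes\cdots\boxtimes f_l,g_1\boxtimes\cdots\boxtimes g_l,\phi,\gamma)$ is a manifestly split $2$-categorical idempotent. This exhibits the sequence of coloured points as a $2$-categorical retract of $\mathbf a$ and, since $\boxtimes$ is functorial, $\End(\id_{\kartwobj{a_1}{B_1}\boxtimes\cdots\boxtimes\kartwobj{a_l}{B_l}})\cong\bigotimes_i H_{B_i}^\Sigma$, as required.

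To align this with the direct-sum form of \Cref{thm:object_coloring}, I would further note that by \Cref{lem:tensor_end_ring} and \Cref{thm:directsum} there is a ring isomorphism $\End_{\defNFoam}(\id_{\mathbf a})\cong\bigotimes_{i=1}^l H_{a_i}^\Sigma\cong\bigoplus_{(A_1,\dots,A_l)}\bigotimes_i H_{A_i}^\Sigma$, the sum running over multisubsets $A_i\subset\Sigma$ with $|A_i|=a_i$, under which the idempotent onto the summand $\bigotimes_i H_{B_i}^\Sigma$ corresponds to $\idem_{B_1}\otimes\cdots\otimes\idem_{B_l}$, i.e. to the coloured identity foam $F_{B_1}\boxtimes\cdots\boxtimes F_{B_l}$ on $\mathbf a$. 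Applying \Cref{prop:idem_sumtwo} then gives $\mathbf a\simeq\bigboxplus_{(A_1,\dots,A_l)}\mathbf a_{(A_1,\dots,A_l)}$ with $\End(\id_{\mathbf a_{(A_1,\dots,A_l)}})\cong\bigotimes_i H_{A_i}^\Sigma$; the $(B_1,\dots,B_l)$-summand is obtained by splitting exactly the $2$-categorical idempotent $F_{B_1}\boxtimes\cdots\boxtimes F_{B_l}$, which is the $\boxtimes$-product of the idempotents split in the previous paragraph, so the two retracts coincide up to equivalence.

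The hard part is purely bookkeeping: carefully tracking the associators, unitors and above all the interchanger $2$-isomorphisms of \Cref{prop:monoidal_foams} when forming the $\boxtimes$-product of the manifestly split $2$-categorical idempotents, and verifying that $\phi\cdot\gamma=\id$ survives this twisting (and, en route, that the $\boxtimes$-product of manifestly split $2$-categorical idempotents is again one, splitting the $\boxtimes$-product idempotent). Conceptually there is nothing to do: $2$-categorical retracts are preserved under the monoidal product, and the identification of the relevant idempotent as $F_{B_1}\boxtimes\cdots\boxtimes F_{B_l}$ is immediate from the naturality built into \Cref{lem:tensor_end_ring} and \Cref{cor:orth_idemp}.
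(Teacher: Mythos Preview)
Your proposal is correct, and in fact your third paragraph is precisely the paper's argument: the one-line proof reads ``Follows from \Cref{lem:tensor_end_ring} and \Cref{thm:object_coloring}'', meaning that one simply reruns the proof of \Cref{thm:object_coloring} with the single object $(k)$ replaced by $\mathbf a$, using \Cref{lem:tensor_end_ring} (together with \Cref{thm:directsum}) in place of \Cref{lem:End-of-edge} to get the requisite decomposition of $\End(\id_{\mathbf a})$, and then applying \Cref{prop:idem_sumtwo}.

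Your first two paragraphs give a genuinely different route: rather than decomposing $\End(\id_{\mathbf a})$ globally, you $\boxtimes$-multiply the individual manifestly split $2$-categorical idempotents coming from \Cref{thm:object_coloring}. This is valid and has the advantage of making the compatibility with the monoidal structure explicit. The cost, as you correctly flag, is the interchanger bookkeeping, and more seriously the fact that the paper never formally equips $\kkdefFoam$ with a monoidal structure (this is acknowledged in a footnote just before \Cref{cor:cat_split}). So while all the underlying data $f_i,g_i,\phi_i,\gamma_i$ live in $\kdefFoam$, where $\boxtimes$ is available, and the computation goes through, the direct application of \Cref{prop:idem_sumtwo} sidesteps this entirely and is the cleaner choice here.
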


\begin{proof}
    Follows from \Cref{lem:tensor_end_ring} and \Cref{thm:object_coloring}. 
\end{proof}

\begin{conv}[Graphical language in the 2-categorical idempotent completion]
Recall the graphical language for the 1-categorical idempotent completion of $\defNFoam$ from \Cref{conv:col_web} and, in particular, for an object $(k)$ the idempotent foam $F_A$ on $\id_k$. \Cref{thm:object_coloring} shows that the object $(k)$ with the 1-morphism $(\id_k,F_A)$ in $\kdefFoam$ admits the structure of a 2-categorical idempotent $(k, (\id_k, F_A), F_A, F_A)$. We use the following diagrammatic representative for the associated object $\kartwobj{k}{A}$ in the 2-categorical idempotent completion $\kkdefFoam$:

\[ \begin{tikzpicture}[anchorbase, line cap = round, scale=.4]
\node at (2.25,0)[circle,cl_green,fill,inner sep=1pt,label=right:$\kartwobj{k}{A}$](kA){};
\end{tikzpicture} 
:=
\left(
\begin{tikzpicture}[anchorbase, line cap = round, scale=.4]
\node at (2.25,0)[circle,fill,inner sep=1pt,label=right:$k$](kA){};
\end{tikzpicture}
,\;
\begin{tikzpicture}[line cap = round, scale=.5, anchorbase]
    \draw[very thick, cl_green] (0,0) to (2,0);
    \node at (0,0)[opacity=1,circle,fill, inner sep=1pt](){};
    \node at (2,0)[opacity=1,circle,fill, inner sep=1pt](){};
\end{tikzpicture}
,\;
\begin{tikzpicture}[anchorbase,line cap = round, scale=.5]
	\path[fill=cl_green, opacity = 0.2] (0,0) to (0,2) to (2,2) to (2,0);
    \draw[very thick] (0,0) to (0,2);
    \draw[very thick] (0,2) to (2,2);
    \draw[very thick] (2,2) to (2,0);
    \draw[very thick] (2,0) to (0,0);
    \node at (0,0)[opacity=1,circle,fill, inner sep=1pt](){};
    \node at (0,2)[opacity=1,circle,fill, inner sep=1pt](){};
    \node at (2,2)[opacity=1,circle,fill, inner sep=1pt](){};
    \node at (2,0)[opacity=1,circle,fill, inner sep=1pt](){};
\end{tikzpicture}
\right) \]
Note that the \emph{coloured point} now carries the same colouring as the sheet. 

From the data witnessing the splitting of $k$ via $\kartwobj{k}{A}$ in the proof of \Cref{thm:object_coloring}, we get two 1-morphisms $f_{\kartwobj{k}{A}} \colon \kartwobj{k}{A} \ot k$ and  $g_{\kartwobj{k}{A}} \colon k \ot \kartwobj{k}{A}$ diagrammatically represented by:
\begin{align*}
      \begin{tikzpicture}[anchorbase, line cap = round, scale=.5] 
      \draw [ultra thick, dotted, cl_green] (0,0) to (2,0);
      \node at (2,0)[circle,fill,inner sep=1.5pt,label=right:$k$](k){};
      \node at (0,0)[circle,fill,cl_green,inner sep=1.5pt,label=left:$\kartwobj{k}{A}$](k){};
      \end{tikzpicture} 
      \qquad
     \begin{tikzpicture}[anchorbase, line cap = round, scale=.5] 
      \draw [ultra thick, dotted, cl_green] (0,0) to (2,0);
      \node at (2,0)[circle,fill, cl_green ,inner sep=1.5pt,label=right:$\kartwobj{k}{A}$](k){};
      \node at (0,0)[circle,fill,inner sep=1.5pt,label=left:$k$](k){};
      \end{tikzpicture}
\end{align*}
We also get the following 2-morphisms $\phi_A \colon f_{\kartwobj{k}{A}} \circ  g_{\kartwobj{k}{A}} \Rightarrow \id_{\kartwobj{k}{A}}$ and $\gamma_A \colon \id_{\kartwobj{k}{A}} \Rightarrow f_{\kartwobj{k}{A}} \circ  g_{\kartwobj{k}{A}}$
which are actually mutual inverses by \Cref{prop:idem_sum}, yielding the 2-isomorphism 
\begin{align}
\label{eq:green_identity}
\begin{tikzpicture}[anchorbase, line cap = round, scale=.4]
        \draw [ultra thick, dotted, cl_green] (0,0) to (4,0);
        \node at (2,0)[circle,fill, inner sep=1.5pt](m){};
        \node at (0,0)[circle,fill, cl_green, inner sep=1.5pt](m){};
        \node at (4,0)[circle,fill,cl_green,inner sep=1.5pt](m){};
    \end{tikzpicture} \cong 
    \begin{tikzpicture}[anchorbase, line cap = round, scale=.4]
        \draw [ultra thick, cl_green] (0,0) to (4,0);
        \node at (0,0)[circle,fill,cl_green, inner sep=1.5pt](m){};
        \node at (4,0)[circle,fill,cl_green, inner sep=1.5pt](m){};
\end{tikzpicture}    
\end{align} 
where the right hand side denotes the identity 1-morphism of $\kartwobj{k}{A}$. Additionally, the data of the splitting yields the 2-isomorphism:
\begin{align}
\label{eq:green_idempotent}
    \begin{tikzpicture}[anchorbase, line cap = round, scale=.4]
        \draw [ultra thick, dotted, cl_green] (0,0) to (4,0);
        \node at (2,0)[circle,fill, cl_green, inner sep=1.5pt](m){};
        \node at (0,0)[circle,fill, inner sep=1.5pt](m){};
        \node at (4,0)[circle,fill,inner sep=1.5pt](m){};
    \end{tikzpicture} \cong 
    \begin{tikzpicture}[anchorbase, line cap = round, scale=.4]
        \draw [ultra thick, cl_green] (0,0) to (4,0);
        \node at (0,0)[circle,fill, inner sep=1.5pt](m){};
        \node at (4,0)[circle,fill, inner sep=1.5pt](m){};
\end{tikzpicture}.
\end{align} 
All these isomorphisms are represented by the coloured sheet $F_A$.
\smallskip

\noindent Next we extend this graphical calculus to 1-morphisms in $\kkdefFoam$. Consider a coloured web as in \Cref{conv:col_web}, representing a 1-morphism in $\kdefFoam$, e.g. \eqref{eq:trivalentcolored}.
In $\kkdefFoam$ we can now pre- and post-compose with the corresponding inclusion and projection 1-morphisms to obtain an associated 1-morphism between the split objects, e.g.:

\[
       \begin{tikzpicture}[anchorbase, line cap = round, scale=.4]
		\draw [very thick, cl_green] (2.25,0) to (.75,0);
		\draw [very thick, cl_red] (.75,0) to [out=135,in=0] (-1,.75);
		\draw [very thick, cl_dark_blue] (.75,0) to [out=225,in=0] (-1,-.75);
        \node at (2.25,0)[circle,fill, cl_green,inner sep=1pt](k){};
        \node at (-1,.75)[circle,fill,cl_red,inner sep=1pt](l){};
        \node at (-1,-.75)[circle,fill,cl_dark_blue,inner sep=1pt](m){};
	\end{tikzpicture} 
        \;\; :=\;\;
\begin{tikzpicture}[anchorbase, line cap = round, scale=.4]
		\draw [very thick, cl_green] (2.25,0) to (.75,0);
        \draw [very thick, dotted, cl_green] (2.25,0) to (3.75,0);
		\draw [very thick, cl_red] (.75,0) to [out=135,in=0] (-1,.75);
        \draw [very thick, dotted, cl_green] (2.25,0) to (3.75,0);
		\draw [very thick, cl_dark_blue] (.75,0) to [out=225,in=0] (-1,-.75);
        \draw [very thick, dotted, cl_dark_blue] (-1,-.75) to (-2.5,-.75);
        \draw [very thick, dotted, cl_red] (-1,.75) to (-2.5,.75);
        \node at (2.25,0)[circle,fill,inner sep=1pt](k){};
        \node at (3.75,0)[circle,fill, cl_green ,inner sep=1pt](k){};
        \node at (-2.5,.75)[circle,fill, cl_red, inner sep=1pt](l){};
        \node at (-2.5,-.75)[circle,fill,cl_dark_blue, inner sep=1pt](m){};
        \node at (-1,.75)[circle,fill,inner sep=1pt](l){};
        \node at (-1,-.75)[circle,fill,inner sep=1pt](m){};
	\end{tikzpicture} 
    \]  
    In this example, with the colour coding and notation of \Cref{prop:inv_foams}, the 1-morphism in $\defNFoam$  represented by the splitting web $(a,b)\ot (c)$ gives rise to a 1-morphism $(a[A],b[B])\ot (c[C])$ in $\kkdefFoam$. 
\end{conv}

\newcommand{\colweb}{
\draw [very thick, green] (3,0) to (1.5,0);
        \draw [very thick, cl_red] (1.5,0) to [out=135,in=0] (0,.75);
		\draw [very thick, cl_dark_blue] (1.5,0) to [out=225,in=0] (0,-.75);
}

\newcommand{\colwebtwo}{
\colweb
        \node at (0,.75)[circle,fill, cl_red, inner sep=1pt](l){};
        \node at (0,-.75)[circle,fill,cl_dark_blue, inner sep=1pt](m){};}

\newcommand{\colwebone}{
\colweb
        \node at (3,0)[circle,fill,cl_green, inner sep=1pt](m){};
}

\begin{proposition}
    \label{prop:web_isos}
    Let $A$ and $B$ be two disjoint multisubsets of $\Sigma$ such that $|A| = a$ and $|B| = b$ for $a, b \in \N_0$. We encode idempotent labelings by colours: red for $\idem_A$, blue for $\idem_B$ and green for $\idem_{A \uplus B}$. Then in $\kkdefFoam$ the following 1-morphisms are canonically isomorphic:

\[
\begin{tikzpicture}[anchorbase, line cap = round, scale=.4]
		\colwebtwo 
        \begin{scope}[xscale=-1,shift={(-4.5,0)}]
        \colwebtwo
        \end{scope}
        \end{tikzpicture}
        \;\;\cong\;\;
        \begin{tikzpicture}[anchorbase, line cap = round, scale=.4]
		\draw [very thick, cl_red] (0,0) to (4, 0);
        \draw [very thick, cl_dark_blue] (0,-1.5) to (4, -1.5);
        \node at (0,-1.5)[circle,fill,cl_dark_blue, inner sep=1pt](){};
        \node at (4, -1.5)[circle,fill,cl_dark_blue, inner sep=1pt](){};
        \node at (0,0)[circle,fill,cl_red, inner sep=1pt](){};
        \node at (4,0)[circle,fill,cl_red, inner sep=1pt](){};
	\end{tikzpicture}
    \quad,\quad
    \begin{tikzpicture}[anchorbase, line cap = round, scale=.4]
		\colwebone 
        \begin{scope}[xscale=-1,shift={(0,0)}]
        \colwebone
        \end{scope}
        \end{tikzpicture}
        \;\;\cong\;\;
           \begin{tikzpicture}[anchorbase, line cap = round, scale=.4]
            \draw [very thick, cl_green] (0,0) to (4, 0);
            \node at (0,0)[circle,fill,cl_green, inner sep=1pt](){};
            \node at (4,0)[circle,fill,cl_green, inner sep=1pt](){};
        \end{tikzpicture}     
\]

\noindent    In particular, we get an equivalence $(\kartwobj{a}{A}, \kartwobj{b}{B}) \simeq (\kartwobj{(a+b)}{A\uplus B})$ in $\kkdefFoam$. 
\end{proposition}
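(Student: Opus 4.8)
The plan is to read off both displayed isomorphisms directly from the invertibility of the admissibly coloured foams of \Cref{prop:inv_foams}, and then to combine them into the stated equivalence of objects. Throughout, disjointness of $A$ and $B$ is exactly what guarantees that $A\uplus B$ is again a multisubset of $\Sigma$ (so that $\kartwobj{(a+b)}{A\uplus B}$ is a legitimate object of $\kkdefFoam$ by \Cref{thm:object_coloring}) and that the relevant foams are invertible rather than merely idempotent.

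First I would unwind the graphical conventions introduced just before the statement. Writing $\mathrm{mrg}\colon(\kartwobj{a}{A},\kartwobj{b}{B})\to\kartwobj{(a+b)}{A\uplus B}$ and $\mathrm{spl}\colon\kartwobj{(a+b)}{A\uplus B}\to(\kartwobj{a}{A},\kartwobj{b}{B})$ for the $1$-morphisms of $\kkdefFoam$ determined by the coloured merge and split trivalent vertices (red $=\idem_A$, blue $=\idem_B$, green $=\idem_{A\uplus B}$), the first displayed $1$-morphism is the composite $\mathrm{spl}\circ\mathrm{mrg}$ and the second is $\mathrm{mrg}\circ\mathrm{spl}$. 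Composing these in $\kkdefFoam$ glues the underlying coloured webs of $\kdefFoam$ along their common edges together with the idempotent foams $F_A,F_B,F_{A\uplus B}$ sitting on the internal edges; using the $2$-isomorphisms \eqref{eq:green_identity} and \eqref{eq:green_idempotent} to absorb these idempotent foams, each composite is identified with a single coloured \emph{digon web} of $\kdefFoam$ decorated by the appropriate idempotents: the digon with neck coloured $\idem_{A\uplus B}$ for $\mathrm{spl}\circ\mathrm{mrg}$, and the bigon with sides coloured $\idem_A$ and $\idem_B$ for $\mathrm{mrg}\circ\mathrm{spl}$.

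Next I would invoke \Cref{prop:inv_foams}: its coloured digon-opening and digon-closing foams are invertible $2$-morphisms of $\kdefFoam$ exhibiting the neck-coloured digon web as isomorphic to the coloured identity web on the pair of thin edges (coloured $\idem_A,\idem_B$), and the side-coloured bigon web as isomorphic to the coloured identity web on the fat edge (coloured $\idem_{A\uplus B}$). By the conventions of \Cref{conv:col_web} and the definition of the split objects, these coloured identity webs are precisely the images in $\kkdefFoam$ of $\id_{(\kartwobj{a}{A},\kartwobj{b}{B})}$ and $\id_{\kartwobj{(a+b)}{A\uplus B}}$. Hence \Cref{prop:inv_foams} yields canonical isomorphisms $\mathrm{spl}\circ\mathrm{mrg}\cong\id_{(\kartwobj{a}{A},\kartwobj{b}{B})}$ and $\mathrm{mrg}\circ\mathrm{spl}\cong\id_{\kartwobj{(a+b)}{A\uplus B}}$, which are the two displayed statements; combining them shows that $\mathrm{mrg}$ and $\mathrm{spl}$ are mutually inverse $1$-morphisms, hence an equivalence $(\kartwobj{a}{A},\kartwobj{b}{B})\simeq\kartwobj{(a+b)}{A\uplus B}$ in $\kkdefFoam$, proving the last assertion.

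The step I expect to demand the most care is the bookkeeping underlying the second paragraph: checking that the composites $\mathrm{spl}\circ\mathrm{mrg}$ and $\mathrm{mrg}\circ\mathrm{spl}$, which a priori are computed via the relative tensor product of bimodules over the manifestly split $2$-categorical idempotents from \Cref{thm:object_coloring} (see \Cref{cons:rel_tensor_prod}), really do collapse --- after applying \eqref{eq:green_identity} and \eqref{eq:green_idempotent} --- to the bare coloured digon webs of $\kdefFoam$ with no leftover idempotent decorations, and matching each of the four foams of \Cref{prop:inv_foams} to the correct source web, target web, and admissible colouring. All of this is routine given the flow/admissibility conditions already established, but it is where the diagrams must be drawn carefully.
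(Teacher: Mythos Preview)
Your proposal is correct and follows essentially the same route as the paper: unfold the definition of the coloured $1$-morphisms in $\kkdefFoam$ via the dotted inclusion/projection $1$-morphisms, apply the invertible coloured foams of \Cref{prop:inv_foams} to simplify the resulting coloured digon web in $\kdefFoam$, and then refold the definitions using \eqref{eq:green_identity} and \eqref{eq:green_idempotent}. The paper's proof is just a one-line diagrammatic version of exactly this argument, so your extra care with the relative tensor product bookkeeping is harmless but not strictly necessary here.
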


\begin{proof}

    \[
    \begin{tikzpicture}[anchorbase, line cap = round, scale=.4]
		\colwebtwo 
        \begin{scope}[xscale=-1,shift={(-4.5,0)}]
        \colwebtwo
        \end{scope}
        \end{tikzpicture}
        \;\;=\;\;
    \begin{tikzpicture}[anchorbase, line cap = round, scale=.4]
		\draw [very thick, cl_green] (2.25,0) to (.75,0);
		\draw [very thick, cl_red] (.75,0) to [out=135,in=0] (-1,.75);
		\draw [very thick, cl_dark_blue] (.75,0) to [out=225,in=0] (-1,-.75);
        \draw [very thick, dotted, cl_dark_blue] (-1,-.75) to (-2.5,-.75);
        \draw [very thick, dotted, cl_red] (-1,.75) to (-2.5,.75);
        \draw [very thick, cl_green] (2.25,0) to (4,0);
        \draw [very thick, cl_dark_blue] (4,0) to [out=315,in=180] (5.75,-.75);
        \draw [very thick, cl_red] (4,0) to [out=45,in=180] (5.75,.75);
        \draw [very thick, dotted, cl_dark_blue] (5.75,-.75) to (7.25,-.75);
        \draw [very thick, dotted, cl_red] (5.75,.75) to (7.25,.75);
        \node at (-2.5,.75)[circle,fill, cl_red, inner sep=1pt](l){};
        \node at (-2.5,-.75)[circle,fill,cl_dark_blue, inner sep=1pt](m){};
        \node at (7.25,.75)[circle,fill, cl_red, inner sep=1pt](l){};
        \node at (7.25,-.75)[circle,fill,cl_dark_blue, inner sep=1pt](m){};
        \node at (-1,.75)[circle,fill,inner sep=1pt](l){};
        \node at (-1,-.75)[circle,fill,inner sep=1pt](m){};
        \node at (5.75,-.75)[circle,fill,inner sep=1pt](k){};
        \node at (5.75,.75)[circle,fill,inner sep=1pt](k){};
	\end{tikzpicture} 
 \;\;   \cong \;\;
    \begin{tikzpicture}[anchorbase, line cap = round, scale=.4]
				\draw [very thick, cl_red] (0,0) to (4, 0);
        \draw [very thick, cl_dark_blue] (0,-1.5) to (4, -1.5);
        \draw [very thick, dotted, cl_dark_blue] (0,-1.5) to (-1.5,-1.5);
        \draw [very thick, dotted, cl_red] (0,0) to (-1.5,0);
        \draw [very thick, dotted, cl_dark_blue] (4, -1.5) to (5.5, -1.5);
        \draw [very thick, dotted, cl_red] (4,0) to (5.5,0);
        \node at (-1.5,-1.5)[circle,fill,cl_dark_blue, inner sep=1pt](m){};
        \node at (5.5, -1.5)[circle,fill,cl_dark_blue, inner sep=1pt](m){};
        \node at (-1.5,0)[circle,fill,cl_red, inner sep=1pt](m){};
        \node at (5.5,0)[circle,fill,cl_red, inner sep=1pt](m){};
        \node at (0,-1.5)[circle,fill,inner sep=1pt](l){};
        \node at (0,0)[circle,fill,inner sep=1pt](m){};
        \node at (4, -1.5)[circle,fill,inner sep=1pt](k){};
        \node at (4,0)[circle,fill,inner sep=1pt](k){};
	\end{tikzpicture} 
     \;\;=\;\;
     \begin{tikzpicture}[anchorbase, line cap = round, scale=.4]
		\draw [very thick, cl_red] (0,0) to (4, 0);
        \draw [very thick, cl_dark_blue] (0,-1.5) to (4, -1.5);
        \node at (0,-1.5)[circle,fill,cl_dark_blue, inner sep=1pt](){};
        \node at (4, -1.5)[circle,fill,cl_dark_blue, inner sep=1pt](){};
        \node at (0,0)[circle,fill,cl_red, inner sep=1pt](){};
        \node at (4,0)[circle,fill,cl_red, inner sep=1pt](){};
	\end{tikzpicture} 
    \] 
    where the non-definitional isomorphism comes from \Cref{prop:inv_foams}. The other isomorphism and equivalence are established analogously.
\end{proof}

\begin{definition}
\label{def:inverse_webs}

Let $A$ be a multisubset of $\Sigma$ with $|A|=k$ and recall the partition into $A_i := \{\lambda_i^{k_i}\}$ with $|A_i|=k_i$ from \Cref{conv:sigma}. We define 1-morphisms  \[ \bigboxtimes_i k_i[A_{i}]  \;\xleftarrow{s} \; k[A] \quad, \quad  \kartwobj{k}{A} \;\xleftarrow{r} \; \bigboxtimes_i \kartwobj{k_i}{A_i}\]
in $\kkdefFoam$ by splitting (resp. merging) webs, with strands coloured so as to split (resp. merge) the $A$-coloured $k$-labeled endpoint into $l$ $A_i$-coloured $k_i$-labeled endpoints for $1 \leq i \leq l$. For the sake of definiteness, we choose upward-adjusted trees with minimal possible numbers of vertices as underlying webs, see e.g.~\Cref{fig:splitting}.
\end{definition}

\begin{figure}[h]
    \centering
\[ s=  \begin{tikzpicture}[anchorbase, line cap = round, scale=.4]
	\draw [cl_dark_blue, very thick] (2.25,0) to (.75,0);
	\draw [cl_green, very thick] (.75,0) to [out=135,in=0] (-1,.75);
	\draw [cl_red, very thick] (.75,0) to [out=225,in=0] (-4.5,-1.55);
	\draw [orange, very thick] (-1,.75) to [out=135,in=0] (-2.75,1.55);
	\draw [gray, very thick] (-1,.75)to [out=225,in=0] (-4.5,-.25);
	\draw [cyan, very thick] (-2.75,1.55) to [out=135,in=0] (-4.5,2.35);
	\draw [cl_purple, very thick] (-2.75,1.55)to [out=225,in=0] (-4.5,1.05);

    \node at (-4.5,-1.55)[circle,fill, cl_red, inner sep=1pt, label=left:\tiny{$\kartwobj{k}{A_4}$}](4){};
    \node at (-4.5,-.25)[circle,fill, gray, inner sep=1pt, label=left:\tiny{$\kartwobj{k}{A_3}$}](3){};
    \node at (-4.5,1.05)[circle,fill, cl_purple, inner sep=1pt, label=left:\tiny{$\kartwobj{k}{A_2}$}](2){};
    \node at (-4.5,2.35)[circle,fill, cyan, inner sep=1pt, label=left:\tiny{$\kartwobj{k}{A_1}$}](1){};
    \node at (2.25,0)[circle,fill, cl_dark_blue, inner sep=1pt, label=right:\tiny{$\kartwobj{k}{A}$}](k){};
\end{tikzpicture} \qquad 
r = \begin{tikzpicture}[anchorbase, x={(-1,0)},y={(0,1)}, line cap = round, scale=.4]
	\draw [cl_dark_blue, very thick] (2.25,0) to (.75,0);
	\draw [cl_green, very thick] (.75,0) to [out=45,in=180] (-1,.75);
	\draw [cl_red, very thick] (.75,0) to [out=315
    ,in=180] (-4.5,-1.55);
	\draw [orange, very thick] (-1,.75) to [out=45,in=180] (-2.75,1.55);
	\draw [gray, very thick] (-1,.75)to [out=315,in=180] (-4.5,-.25);
	\draw [cl_blue, very thick] (-2.75,1.55) to [out=45,in=180] (-4.5,2.35);
	\draw [cl_purple, very thick] (-2.75,1.55)to [out=315,in=180] (-4.5,1.05);

    \node at (-4.5,-1.55)[circle,fill, cl_red, inner sep=1pt, label=right:\tiny{$\kartwobj{k}{A_4}$}](4){};
    \node at (-4.5,-.25)[circle,fill, gray, inner sep=1pt, label=right:\tiny{$\kartwobj{k}{A_3}$}](3){};
    \node at (-4.5,1.05)[circle,fill, cl_purple, inner sep=1pt, label=right:\tiny{$\kartwobj{k}{A_2}$}](2){};
    \node at (-4.5,2.35)[circle,fill, cl_blue, inner sep=1pt, label=right:\tiny{$\kartwobj{k}{A_1}$}](1){};
    \node at (2.25,0)[circle,fill, cl_dark_blue, inner sep=1pt, label=left:\tiny{$\kartwobj{k}{A}$}](k){};
\end{tikzpicture} \]   
    \caption{Morphisms $s$ and $r$ for $A = \{\lambda_{1}^{k_1},\lambda_{2}^{k_2}, \lambda_{3}^{k_3}, \lambda_{4}^{k_4} \}$}
    \label{fig:splitting}
\end{figure}
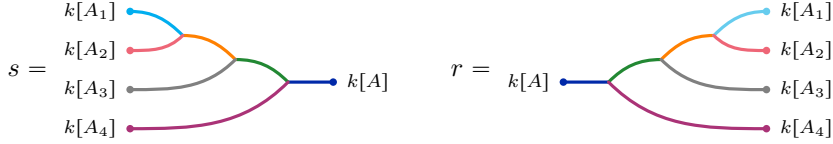

\begin{theorem}
\label{thm:full_split}
Retaining notation from \Cref{def:inverse_webs}, the 1-morphisms $s$ and $r$ furnish an equivalence $\kartwobj{k}{A} \simeq\bigboxtimes \kartwobj{k_i}{A_i}$ in $\kkdefFoam$. Together with \eqref{eq:objsumdecomp}, we obtain the decomposition 
    \begin{equation}
     \label{eq:objsumtensdecomp}
     k \simeq \bigboxplus_{\substack{A\in \powerset(\Sigma)\\ |A|=k }} \bigboxtimes_{i}^{l} \kartwobj{k_i}{A_i}.
 \end{equation}
\end{theorem}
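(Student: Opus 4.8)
The plan is to deduce the statement from the rank-two case, which is exactly \Cref{prop:web_isos}, by induction on the number $l$ of pairwise distinct roots occurring in $\Sigma$. The elementary observation that makes this work is that, since $\lambda_1,\dots,\lambda_l$ are pairwise distinct, the multisubsets $A_i=\{\lambda_i^{k_i}\}$ are pairwise disjoint in the sense of \Cref{conv:sigma}, with $A=\biguplus_i A_i$ and $k=\sum_i k_i$; consequently, for $1\le j\le l-1$ the multisubsets $B_j:=\biguplus_{i\le j}A_i$ and $A_{j+1}$ are disjoint, with $|B_j|=\sum_{i\le j}k_i$. This is precisely what lets us feed \Cref{prop:web_isos} one root at a time.

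First I would set up the induction. For $l=1$ there is nothing to prove. For the step, assume $s,r$ give an equivalence $\kartwobj{|B_{l-1}|}{B_{l-1}}\simeq\bigboxtimes_{i=1}^{l-1}\kartwobj{k_i}{A_i}$. Whiskering on the right by $\kartwobj{k_l}{A_l}$, i.e. applying $-\boxtimes\kartwobj{k_l}{A_l}$, is a $2$-functor and hence preserves $2$-isomorphisms and equivalences, so it produces an equivalence $\kartwobj{|B_{l-1}|}{B_{l-1}}\boxtimes\kartwobj{k_l}{A_l}\simeq\bigboxtimes_{i=1}^{l}\kartwobj{k_i}{A_i}$. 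On the other hand, \Cref{prop:web_isos} applied to the disjoint pair $(B_{l-1},A_l)$ gives an equivalence $\kartwobj{k}{A}\simeq\kartwobj{|B_{l-1}|}{B_{l-1}}\boxtimes\kartwobj{k_l}{A_l}$, realised by a single admissibly coloured trivalent split web and its mirror merge web. Composing yields $\kartwobj{k}{A}\simeq\bigboxtimes_{i=1}^l\kartwobj{k_i}{A_i}$. It then remains to check that the particular webs $s,r$ of \Cref{def:inverse_webs} realise this equivalence: the composite of the split webs produced along the induction is an upward-adjusted binary tree with $l-1$ trivalent vertices, coloured so as to split the $A$-coloured endpoint into the $A_i$-coloured endpoints, and any two such trees differ only by the associativity generators of \Cref{def:foams}, which are $2$-isomorphisms; hence this composite is $2$-isomorphic to $s$, and dually to $r$. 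Equivalently, one can argue directly that $r\circ s\cong\id_{\kartwobj{k}{A}}$ by collapsing the cascade of coloured ``split-then-merge'' digons it contains — each collapse being an instance of \Cref{prop:web_isos} (or \Cref{prop:inv_foams}) for a disjoint pair $(B_j,A_{j+1})$, whiskered by the parallel leaf strands and the surrounding tree — and that $s\circ r\cong\id$ by collapsing the corresponding ``merge-then-split'' digons.

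Finally I would substitute this equivalence into the object decomposition \eqref{eq:objsumdecomp} of \Cref{thm:object_coloring}: replacing each summand $\kartwobj{k}{A}$, indexed by $A\in\powerset(\Sigma)$ with $|A|=k$ (equivalently, by tuples with $0\le k_i\le N_i$ and $\sum_ik_i=k$), by the equivalent object $\bigboxtimes_i\kartwobj{k_i}{A_i}$ gives \eqref{eq:objsumtensdecomp}; the compatibility with the inclusions on endomorphism rings is inherited from the compatibilities already recorded in \Cref{thm:object_coloring} and \Cref{prop:web_isos}. I expect the only genuine obstacle to be the coherent bookkeeping in the inductive assembly: one must verify that the cascade of digon collapses is independent of the order in which it is performed and compatible with the associativity and interchanger $2$-isomorphisms of the semistrict monoidal structure from \Cref{prop:monoidal_foams}. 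This is forced by the coherence axioms together with the invertibility statements of \Cref{prop:inv_foams}, but tracking the whiskerings cleanly is the technical heart of the argument.
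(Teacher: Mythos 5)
Your proposal is correct and matches the paper's proof, which simply says that applying \Cref{prop:web_isos} inductively shows $s\circ r\cong\id_{\bigboxtimes\kartwobj{k_i}{A_i}}$ and $r\circ s\cong\id_{\kartwobj{k}{A}}$; your induction on $l$, peeling off $A_l$ one root at a time and collapsing the resulting digons via \Cref{prop:web_isos}, is exactly this argument unpacked. The coherence worries you raise at the end are legitimate but are the same ones the paper implicitly absorbs into the word ``inductively.''
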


\begin{proof}
Applying \Cref{prop:web_isos} inductively shows $s \circ r \cong \id_{\bigboxtimes \kartwobj{k_i}{A_i}}$ and $r \circ s \cong \id_{\kartwobj{k}{A}}$. 
\end{proof}

We can now extend the description of Lee homology from \cite{BNM06}. 

\begin{example} 
Consider the setting of Lee's deformed $\gltwo$ link homology, namely $\mathbf{2Foam}_+^\Sigma$ for $\Sigma = \{-1, 1\}$, \cite{W25}. The object $1 \in\mathbf{2Foam}_+^\Sigma$ decomposes by \Cref{thm:directsum} in $\kkar \kar\mathbf{2Foam}_+^\Sigma$ as 
\[1 \simeq \kartwobj{1}{\{1\}} \boxplus \kartwobj{1}{\{-1\}} \simeq \textcolor{cl_green}{1} \boxplus \textcolor{cl_red}{1},\]  
using the colour conventions from \cite[Section 4]{BNM06}. More generally, one can take $\Sigma = \{\mu, \lambda\}$  for $\lambda -\mu \in \C$ and obtain the splitting $1 \simeq \kartwobj{1}{\{\mu\}} \boxplus \kartwobj{1}{\{\lambda\}}$. We get a decomposition along the $\gltwo$ branching rule. 
\end{example}

For $\lambda_i\in \Sigma$ we consider the \emph{$\lambda_i$-monochromatic} full sub-2-category of $\kdefFoam$ on all objects and on all 1-morphisms represented by webs whose edges are coloured by idempotents indexed by multisubsets of $\Sigma$ that contain only $\lambda_i$, see \cite[Definition 5.15]{RW16}. By admissibility, all 2-morphisms can then be represented as linear combinations of foams whose facets likewise carry a $\lambda_i$-monochromatic idempotent colouring. As a consequence of \cite[Proposition 5.17]{RW16}, this 2-category can be identified with $\kar N_i\mathbf{Foam}_+^{\Sigma_i}$ where we use the notation $\Sigma_i = \{\lambda_i^{N_{i}} \} \subset \Sigma$ from \Cref{conv:sigma}. Analogously, we can consider $\kkar \kar N_i\mathbf{Foam}_+^{\Sigma_i}$ as full sub-2-category of $\kkdefFoam$. We now define and abusively denote by
\[
\tilde{\bigboxtimes}_{i} \kkar \kar N_i\mathbf{Foam}_+^{\Sigma_i}
\]
the sub-2-category of $\kkdefFoam$ on objects, 1-morphisms and 2-morphisms that are obtained by $\boxtimes$-tensoring\footnote{Strictly speaking, we have not defined $\boxtimes$ as a monoidal structure on $\kkdefFoam$, but it is clear how it would act on objects, 1-morphisms and 2-morphisms. Alternatively, we can equivalently construct a corresponding sub-2-category on the level of the local idempotent completions $\kar$, where the monoidal structure is inherited by \cite[Remark 3.17]{SW24}, and then proceed to $\kkar$.} objects, 1-morphisms and 2-morphisms from $\kkar \kar N_i\mathbf{Foam}_+^{\Sigma_i}$ in order specified by $1\leq i\leq l$. 

\begin{corollary}
    \label{cor:cat_split}
    The inclusion \[ \tilde{\bigboxtimes}_{i} \kkar \kar N_i\mathbf{Foam}_+^{\Sigma_i} \hookrightarrow  \kkdefFoam \] induces an equivalence of 2-categories. 
\end{corollary}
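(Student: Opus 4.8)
The plan is to verify that the fully faithful inclusion
$\iota\colon \tilde{\bigboxtimes}_i \kkar\kar N_i\mathbf{Foam}_+^{\Sigma_i}\hookrightarrow\kkdefFoam$
is an equivalence of $2$-categories by checking the three standard conditions: that $\iota$ is essentially surjective on objects, that it is essentially surjective on $1$-morphisms in each Hom-category, and that it is bijective on $2$-morphisms. Since the $1$- and $2$-morphisms of $\tilde{\bigboxtimes}_i \kkar\kar N_i\mathbf{Foam}_+^{\Sigma_i}$ are by construction built only from $\boxtimes$-products of monochromatic webs and foams (together with their composites and $\C$-linear combinations), the last two conditions are where the Rose--Wedrich decomposition of $\NFoam_+^\Sigma$ into its $\lambda_i$-monochromatic parts is used, while the first is where the object-level splitting of \Cref{thm:full_split} is used.

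\emph{Objects.} One begins with the generating objects: by \Cref{thm:full_split} there is, in $\kkdefFoam$, an equivalence $k\simeq\bigboxplus_{|A|=k}\bigboxtimes_i\kartwobj{k_i}{A_i}$ with each $\kartwobj{k_i}{A_i}$ an object of the monochromatic sub-$2$-category $\kkar\kar N_i\mathbf{Foam}_+^{\Sigma_i}$. Applying this entrywise to an arbitrary sequence $\mathbf{a}=(a_1,\dots,a_m)$ and distributing $\boxtimes$ over $\boxplus$ exhibits $\mathbf{a}$ as equivalent to a finite direct sum of $\boxtimes$-products of objects of the $\kkar\kar N_i\mathbf{Foam}_+^{\Sigma_i}$, hence to an object of $\tilde{\bigboxtimes}_i \kkar\kar N_i\mathbf{Foam}_+^{\Sigma_i}$. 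To reach an arbitrary object of $\kkdefFoam$, i.e.\ a $2$-categorical idempotent on some sequence $\mathbf{a}$, one uses that $\kkdefFoam$ is idempotent-complete (\Cref{prop:detectcomplete}) together with the block decompositions $\End(\id_\mathbf{a})\cong\bigotimes_j H_{a_j}^\Sigma$ and $H_{a_j}^\Sigma\cong\bigoplus_A H_A^\Sigma$, $H_A^\Sigma\cong\bigotimes_i H_{k_i}^{N_i}$ (\Cref{lem:tensor_end_ring}, \Cref{thm:directsum}, \Cref{cor:orth_idemp}): each block $H_A^\Sigma$ is a local $\C$-algebra, so one transports the idempotent along the equivalence above and splits it blockwise inside $\tilde{\bigboxtimes}_i \kkar\kar N_i\mathbf{Foam}_+^{\Sigma_i}$ (this last step presupposes that this sub-$2$-category is idempotent-complete, to which we return below).

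\emph{$1$- and $2$-morphisms.} After the identifications above, a $1$-morphism of $\kkdefFoam$ between objects of $\tilde{\bigboxtimes}_i \kkar\kar N_i\mathbf{Foam}_+^{\Sigma_i}$ is represented by a web of $\defNFoam$ whose edges carry orthogonal idempotents $\idem_A$, pre- and post-composed with the splitting and merging $1$-morphisms $s,r$ of \Cref{def:inverse_webs}. Using the equivalences $\kartwobj{k}{A}\simeq\bigboxtimes_i\kartwobj{k_i}{A_i}$ of \Cref{thm:full_split} and the web isomorphisms of \Cref{prop:web_isos} to resolve every mixed-colour edge into monochromatic strands, and then the monochromatic decomposition of webs \cite[Proposition 5.17]{RW16}, such a $1$-morphism becomes isomorphic in $\kkdefFoam$ to a composite of $\boxtimes$-products of monochromatic $1$-morphisms, i.e.\ to one in the image of $\iota$; this gives essential surjectivity on $1$-morphisms. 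For $2$-morphisms one decomposes every foam as a sum over its admissibly coloured versions, and --- using the flow condition together with \cite[Proposition 3.18, Proposition 5.17]{RW16} --- identifies each summand with a $\C$-linear combination of $\boxtimes$-products of monochromatic foams; this yields both surjectivity and injectivity of $\iota$ on $2$-morphisms.

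\emph{Main obstacle.} The delicate point is the object-level bookkeeping in the second paragraph: one must know that $\kkdefFoam$ acquires no $2$-categorical idempotents beyond those already accounted for by \Cref{thm:full_split}, equivalently that the sub-$2$-category $\tilde{\bigboxtimes}_i \kkar\kar N_i\mathbf{Foam}_+^{\Sigma_i}$ is genuinely idempotent-complete, so that the transported condensates really do split inside it. Because $\boxtimes$ has been introduced only as a monoidal structure on the local idempotent completion $\kar$ rather than on $\kkdefFoam$ itself (cf.\ the footnote above), the cleanest organization is to carry out the argument first for the corresponding sub-$2$-category of $\kar\defNFoam$ --- observing that, although seams with $A\uplus B=C$ mixing the colours attached to distinct roots $\lambda_i$ are combinatorially permitted, the Rose--Wedrich decomposition renders such bichromatic foams redundant, so the blocks indexed by distinct roots are genuinely non-interacting --- and only then to apply $\kkar$, which preserves equivalences of $2$-categories.
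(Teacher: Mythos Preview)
Your approach is correct and matches the paper's, whose one-line proof simply points to \Cref{thm:full_split} for the object level and to \cite[\S 4.2, \S 4.3]{RW16} for the Hom-categories. The local-algebra step in your Objects paragraph only treats $2$-categorical idempotents whose underlying $1$-morphism is a summand of $\id_{\mathbf{a}}$, not arbitrary ones, but you correctly flag this gap and close it via the $\kar$-level construction in your final paragraph---precisely the route indicated in the paper's footnote preceding the corollary.
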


\begin{proof}
    Follows from the equivalence of objects from \Cref{thm:full_split} and \cite[\S 4.2,\S 4.3]{RW16}. 
\end{proof}

\begin{remark}
We take \Cref{cor:cat_split} and the results of \cite[\S 4.2,\S 4.3]{RW16} as evidence for an equivalence 
\[
\bigotimes_{i} \kkar \kar N_i\mathbf{Foam}_+^{\Sigma_i} \xrightarrow{\simeq}  \kkdefFoam
\]
as 2-idempotent complete monoidal 2-categories, where $\bigotimes$ now denotes a conjectural symmetric monoidal structure on a suitably defined ambient 4-category of monoidal 2-categories. 

Likewise, we expect that braided monoidal 2-categories\footnote{Or, more precisely, $\mathbb{E}_2$-monoidal $(\infty,2)$-categories.} obtained by taking pre-triangulated hulls locally decompose into an analogous way. This would give a fully local description of decomposition phenomena for deformed link homologies and their associated skein modules \cite{MWW24,RW24}. 
\end{remark}

\renewcommand*{\bibfont}{\small}
\setlength{\bibitemsep}{0pt}
\raggedright
\printbibliography

\end{document}